\newcommand{\RR}{\mathbb{R}}
\newcommand{\CC}{\mathbb{C}}
\newcommand{\ZZ}{\mathbb{Z}}
\newcommand{\NN}{\mathbb{N}}
\newcommand{\PP}{\mathbb{P}}
\newcommand{\CP}{\mathbb{CP}}
\newcommand{\kk}{\Bbbk}
\newcommand{\HH}{\mathbb H}
\newcommand{\mcA}{\mathcal A}
\newcommand{\mcC}{\mathcal C}
\newcommand{\mcM}{\mathcal M}
\newcommand{\mcN}{\mathcal N}
\newcommand{\mcP}{\mathcal P}
\newcommand{\defeq}{\vcentcolon=}
\newcommand{\id}{\operatorname{id}}
\newcommand{\Spec}{\operatorname{Spec}}
\newcommand{\Span}{\operatorname{span}}
\newcommand{\ind}{\operatorname{ind}}
\newcommand{\lcm}{\operatorname{lcm}}
\newcommand{\Ham}{\operatorname{Ham}}
\newcommand{\Sp}{\operatorname{Sp}}
\newcommand{\diag}{\operatorname{diag}}
\newcommand{\im}{\operatorname{im}}
\newcommand{\dd}{\partial}
\newcommand{\RS}{\mathrm{RS}}
\newcommand{\product}{\mathrm{product}}
\newcommand{\Morse}{\mathrm{Morse}}
\newcommand{\incl}{\mathrm{incl}}
\DeclarePairedDelimiter{\ceil}{\lceil}{\rceil}
\DeclarePairedDelimiter{\floor}{\lfloor}{\rfloor}
\DeclarePairedDelimiterX{\shiftedinterval}[2]{[}{)}{#1, #2}
\newtheorem{thm}{Theorem}[section]
\newtheorem{prop}[thm]{Proposition}
\newtheorem{cor}[thm]{Corollary}
\newtheorem{lem}[thm]{Lemma}
\newtheorem{claim}[thm]{Claim}
\theoremstyle{definition}
\newtheorem{defn}[thm]{Definition}
\newtheorem{example}[thm]{Example}
\newtheorem{rmk}[thm]{Remark}
\title{Periodic Reeb flows and products in symplectic homology}
\author{Peter Uebele}
\date{}
\begin{document}

\maketitle

\begin{abstract}
 In this paper, we explore the structure of Rabinowitz--Floer homology $RFH_*$ on contact manifolds whose Reeb flow is periodic (and which satisfy an index condition such that $RFH_*$ is independent of the filling). The main result is that $RFH_*$ is a module over the Laurent polynomials $\ZZ_2[s,s^{-1}]$, where $s$ is the homology class generated by a principal Reeb orbit and the module structure is given by the pair-of-pants product. In most cases, this module is free and finitely generated.
\end{abstract}

\section{Introduction}

Symplectic homology, as introduced in \cite{Floer-Hofer, Viterbo_part_1}, is a generalization of Floer theory to non-compact symplectic manifolds (or compact symplectic manifolds with boundary). 
As such, it has not only an additive structure (chain groups and a differential), but also other algebraic operations, coming from counting Riemann surfaces with an arbitrary number of positive and negative punctures. Most notably, there is a (commutative and associative) product called pair-of-pants product and a unit, giving symplectic homology the structure of a commutative unital ring.

On the other hand, symplectic homology is very hard to compute already on the additive level, mainly because the differential is defined by counting solutions to a certain PDE involving, among other things, the choice of a generic almost complex structure. The same difficulty applies for computations of the product structure. Therefore, the ring structure of symplectic homology is known only in few examples. Known examples include:
\begin{itemize}
 \item Subcritical Stein manifolds \cite{Cieliebak_handle}, where symplectic homology vanishes,
 \item Cotangent bundles \cite{Viterbo_part_2, Abb_Schwarz}, where symplectic homology is isomorphic to the homology of the loop space with the Chas--Sullivan product,
 \item Negative line bundles \cite{Ritter_neg_line}, where symplectic homology is related to Gromov--Witten theory and quantum cohomology.
\end{itemize}

This paper attempts to apply the techniques of \cite{Ritter_neg_line} to the more general case of contact manifolds with periodic Reeb flow. For this to make sense, we need a notion of symplectic homology that is an invariant of contact manifolds, not the fillings. One such notion could be positive symplectic homology, but this has the drawback of not carrying a unital product. Instead, we will use the $\bigvee$-shaped symplectic homology $\check{SH}_*$ from \cite{Ciel_Fra_Oan}, which is isomorphic to Rabinowitz Floer homology $RFH_*$ by \cite[Theorem~1.5]{Ciel_Fra_Oan}. The following proposition is a variation of \cite[Theorem~1.14]{Ciel_Fra_Oan} and is proved using SFT-compactness and neck-stretching techniques.

For simplicity, we assume throughout the introduction that $\pi_1(\Sigma)=0$ in order to avoid ambiguity in the grading. See Remark~\ref{rmk_grading} for a discussion of this assumption. Moreover, we always use $\ZZ_2$-coefficients, unless stated otherwise.

\begin{prop} \label{prop_intro_indep}
 Let $(\Sigma, \alpha)$ be a $(2n-1)$-dimensional contact manifold with $\pi_1(\Sigma)=0$, $c_1(\Sigma)=0$ and the condition
 \begin{equation} \label{eq_intro_index_cond}
  \mu_{CZ}(c) > 4-n \qquad \text{for all closed Reeb orbits $c$},
 \end{equation}
 on the Conley--Zehnder indices. Then, $\check{SH}_*(\Sigma)$ can be defined in the symplectization $\RR_+\times \Sigma$, without reference to any symplectic filling of $\Sigma$. 
 Moreover, if there exists a Liouville filling $W$ such that $c_1(W)=0$, then $\check{SH}_*(\Sigma) \cong \check{SH}_*(W)$.
\end{prop}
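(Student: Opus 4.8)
The plan is to realize $\check{SH}_*$ by a cofinal family of \emph{V-shaped} Hamiltonians and to use the index condition \eqref{eq_intro_index_cond}, via a neck-stretching argument, to confine all the Floer trajectories that matter (those governing the differential, the continuation maps and the chain homotopies, i.e.\ those in moduli spaces of dimension $\leq 1$) to a fixed collar of $\Sigma$, so that they never enter the filling. Concretely, on the symplectization $\RR_+\times\Sigma$ with $\omega=d(r\alpha)$ I would use Hamiltonians $H=h(r)$ with $h$ convex, minimal at some $r_0$, with slope tending to $+\infty$ as $r\to\infty$ and to $-\infty$ as $r\to 0$ (the latter being what confines trajectories away from the concave end); on a Liouville filling $W$ I would use the completion $\hat W=W\cup_\Sigma([1,\infty)\times\Sigma)$, with $H$ of the same shape on the cylindrical end, a $C^2$-small Morse function on $W$, and a generic cylindrical almost complex structure. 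In either case the $1$-periodic orbits of $H$ are the critical points of a Morse function on a Morse--Bott family of constants (modelled on $\Sigma$, resp.\ on $W$) together with two non-constant orbits $\check\gamma,\hat\gamma$ per closed Reeb orbit $\gamma$, and the Floer energy of any trajectory is bounded by the action difference of its asymptotics, uniformly in the neck length.

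For the first assertion, the only thing to check is that a Floer trajectory in a moduli space of dimension $\leq 1$ cannot escape to $r=\infty$ or $r=0$. The first is excluded by the usual maximum principle on the convex end. For the second I would argue by contradiction: stretching the neck along a level $\{r=\epsilon\}$, a sequence of escaping trajectories converges, in the sense of SFT compactness, to a broken holomorphic building one of whose components is a non-constant finite-energy $J$-holomorphic curve with positive punctures asymptotic to closed Reeb orbits and no negative puncture; since $\pi_1(\Sigma)=0$ and $c_1(\Sigma)=0$ fix the grading, the Fredholm index of such a curve is controlled by the sum of the Conley--Zehnder indices of its asymptotics and hence, by \eqref{eq_intro_index_cond}, is $\geq 2$, whereas the codimension bookkeeping for the degeneration forces it to be $\leq 1$ --- a contradiction. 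It follows that all relevant trajectories stay in a fixed $[\epsilon,R]\times\Sigma$, the continuation and homotopy maps are defined and satisfy the expected relations, and $\check{SH}_*(\Sigma)\defeq\varinjlim$ over the slope is a well-defined invariant of $(\Sigma,\alpha)$.

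For the second assertion I would equip $\hat W$ with Floer data agreeing with the symplectization data on a long collar $[\epsilon,R]\times\Sigma$ and stretch the neck along $\Sigma$. The same argument --- now the offending component of the limiting building is a $J$-holomorphic curve in $\hat W$ with only positive punctures and $c_1(\hat W)=0$ (since $c_1(W)=0$) --- shows that no Floer trajectory in a moduli space of dimension $\leq 1$ reaches into $W$. Hence the ``Reeb parts'' of the two complexes, i.e.\ the subquotients generated by the $\check\gamma,\hat\gamma$ together with all differentials and continuation maps among them and between them and the constants that are compatible with the action filtration, coincide; the only difference between the two complexes is that the constants contribute $H_*(W)$ on the filling side and $H_*(\Sigma)$ on the symplectization side. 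I would conclude by observing that this difference is invisible in homology: on each side the topological part is a subquotient whose contribution to $\check{SH}_*$, once the two slopes of the V are linked by the connecting homomorphism, reduces --- via the long exact sequence of the pair $(W,\partial W)$ and Poincaré--Lefschetz duality on the filling side --- to the same shifted copy of $H_*(\Sigma)$ in both cases (this is the computation of the topological part of $RFH$ in \cite{Ciel_Fra_Oan}). This gives $\check{SH}_*(W)\cong\check{SH}_*(\Sigma)$.

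The step I expect to be the real obstacle is the codimension/index bookkeeping in the neck-stretched limit: one must verify that every stratum of the compactified moduli space with a non-trivial bottom level in $W$ (or reaching the concave end, in the first part) has codimension at least $2$. This requires controlling multiply covered curves in the symplectization levels and in the filling --- here one uses that the Fredholm index of a cover is bounded below by that of the underlying simple curve, which for non-trivial symplectization curves is $\geq 1$ --- and the Morse--Bott families of Reeb orbits (which in the periodic-Reeb-flow setting are large), handled by passing to the perturbed cascade moduli spaces; granting transversality for simple curves and cylinders for generic cylindrical $J$, this is exactly the point at which the strict inequality in \eqref{eq_intro_index_cond}, rather than a non-strict one, is needed.
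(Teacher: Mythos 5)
Your overall strategy matches the paper's: take a cofinal family of $\bigvee$-shaped Hamiltonians, use SFT compactness (respectively neck-stretching along $\partial W$) to degenerate escaping trajectories into holomorphic buildings, and use the index hypothesis to show the offending strata are empty. But the place where you perform the index bookkeeping is where the paper and your argument diverge, and your placement leads to a real gap.

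You locate the index count on a bottom-level component: a $J$-holomorphic curve in a symplectization level with only positive punctures, and you claim its Fredholm index is $\geq 2$. This is not true in general under \eqref{eq_intro_index_cond}. For a genus-zero curve with $\ell$ positive punctures the SFT index is $(n-3)(2-\ell) + \sum_j \mu(c_j)$, and with $\mu(c_j) \geq 5-n$ this is bounded below only by $2n-6 + \ell(8-2n)$, which for $\ell \geq 2$ and $n \geq 6$ is nonpositive. Worse, you then try to patch this up with the assertion that the Fredholm index of a branched multiple cover is bounded below by that of the underlying simple curve — this is false in symplectizations and is precisely the transversality obstruction that plagues SFT; it cannot be ``granted.'' So your last paragraph does not resolve the obstacle it names, it identifies a hole.

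The paper's proof (Lemma~\ref{lem_cyl_cpt} and Corollary~\ref{cor_defined_in_sympl}) avoids this entirely by doing the computation on the \emph{top} component of the building: the Floer cylinder with extra negative punctures asymptotic to Reeb orbits $c_1,\dots,c_k$, whose domain is connected by the maximum-principle argument of Bourgeois--Oancea. Its virtual dimension is $\mu(\gamma_+)-\mu(\gamma_-) - \sum_j(\mu(c_j)+n-3) - 1$; under $\mu(c) > 4-n$ each summand is $\geq 2$, so this is negative whenever $\mu(\gamma_+)-\mu(\gamma_-) \leq 2$, which covers the differential, $\partial^2=0$, and continuation maps. Transversality for these punctured Floer cylinders is achievable for generic data because the Hamiltonian perturbation precludes the multiple-cover problem, and one never needs any regularity of the SFT levels below. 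The same top-level count is then used after neck-stretching along $\partial W$ to identify the two differentials, which gives the second assertion directly as a chain-level identification; your detour through Poincar\'e--Lefschetz duality and a comparison of ``topological parts'' is unnecessary (the constant orbits that survive the action truncation lie on a level $\{r_0\}\times\Sigma$ in both pictures, not on $W$). Finally, a small but real point: for SFT compactness to apply at the concave end, the Hamiltonian must be made constant for small $r$; a slope going to $-\infty$ there neither confines trajectories by a maximum principle nor produces a cylindrical neck to stretch.
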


By \cite{Ciel_Oan}, $\check{SH}_*(W)$ carries a commutative, unital product, just like $SH_*(W)$. Furthermore, for contact manifolds satisfying a stronger index condition, namely
\begin{equation} \label{eq_intro_index_cond_2}
 \mu_{CZ}(c) > 3 \qquad \text{for all closed Reeb orbits $c$},
\end{equation}
the product can also be defined without reference to a filling.

Our main tool to get structural results for $\check{SH}_*(\Sigma)$, which builds upon ideas from \cite{Seidel_pi_1, Ritter_neg_line}, is to study the action of a loop of Hamiltonian diffeomorphisms
\[
 g \colon S^1 \longrightarrow \Ham(\RR_+\times \Sigma, d(r\alpha)), \qquad t\mapsto g_t
\]
on $\check{SH}_*(\Sigma)$. This action is defined by $\gamma(t)\mapsto g_t\cdot \gamma(t)$ on the level of generators, and similarly by $u\mapsto g_t\cdot u$ on the Floer cylinders counted by the differential. In this way, $g_t$ defines an isomorphism
\begin{equation} \label{eq_intro_iso}
 S_g \colon \check{SH}_*(\Sigma) \stackrel{\cong}{\longrightarrow} \check{SH}_{*+2I(g)}(\Sigma),
\end{equation}
where $I(g)$ is a Maslov index depending only on the loop $g_t$. In this paper, we are mainly interested in the example where $g_t$ is given by the Reeb flow on $\Sigma$, which is always possible if the Reeb flow is periodic (with the period normalized to one).
In most cases, this loop of Hamiltonian diffeomorphisms does not extend to a symplectic filling of $\Sigma$, hence the need to work in the symplectization.

The isomorphism \eqref{eq_intro_iso} does not preserve the product, but instead satisfies the relation
\[
 S_g (x\cdot y) = S_g(x) \cdot y.
\]
In particular, if we take $x$ to be the unit we get $S_g(y) = s\cdot y$, where $s\defeq S_g(1)$ is the principal orbit of $(\Sigma, \alpha)$. Furthermore, by taking the loop $g$ in the reverse direction, we get the element $s^{-1}$ inverse to $s$. 

\begin{thm}
\label{thm_intro_module}
 Let $(\Sigma, \alpha)$ be a contact manifold with periodic Reeb flow (with the period normalized to one) satisfying \eqref{eq_intro_index_cond_2} and $\pi_1(\Sigma)=0$. Then, $\check{SH}_*(\Sigma)$ is a module over the ring of Laurent polynomials $\ZZ_2[s,s^{-1}]$, with multiplication given by the pair-of-pants product
 \[
  (s^k, x) \mapsto S_g^k(x) = s^k\cdot x.
 \]
 If $I(g)\neq 0$ this module is free and finitely generated. By contrast, if $I(g)=0$ then $\check{SH}_*(\Sigma)$ is a free module (i.e.\ a vector space) over the field of Laurent series $\ZZ_2((s^{-1}))$.\footnote{
 The notation $\ZZ_2((s^{-1}))$ stands for the field of semi-infinite Laurent series of the form $\sum_{j=-\infty}^N c_j s^j$, i.e.\ the powers of $s$ can go to $-\infty$.}
 
 In both cases, the dimension of this module is bounded from above by $\dim_{\ZZ_2} \left(\check{SH}^{\shiftedinterval{0}{1}}(\Sigma)\right)$.
\end{thm}

To put this result into context, recall that $\check{SH}_*(\Sigma)$ is usually not finitely generated as a $\ZZ_2$-vector space, so only the product gives a finite algebraic structure. 
Furthermore, Theorem~\ref{thm_intro_module} gives some product computation that would be very difficult to prove directly.
In examples, however, it turns out that there can be further relations between the generators of the module, so Theorem~\ref{thm_intro_module} does not reveal the full ring structure of $\check{SH}_*(\Sigma)$.

While the index conditions \eqref{eq_intro_index_cond} and \eqref{eq_intro_index_cond_2} are quite restrictive, both of them can be relaxed if $\Sigma$ admits a Liouville filling $W$ with $c_1(W)=0$. Then, indeed, \eqref{eq_intro_index_cond} can be replaced by $\mu_{CZ}(c) > 3-n$ for all Reeb orbits $c$. Moreover, if in addition $I(g)\neq 0$, then the conclusion of Theorem~\ref{thm_intro_module} also holds under the weaker assumption that $\mu_{CZ}(c) > 3-n$ for all Reeb orbits, see Proposition~\ref{prop_weaker_assumption}.

An important class of examples for Theorem~\ref{thm_intro_module} are Brieskorn manifolds. While the additive structure of symplectic homology has been studied in \cite{Fauck, KvK, Uebele}, the product structure remains largely unexplored (except for some special examples, see Section~\ref{sec_comparison}). Apart from giving some product computations, Theorem~\ref{thm_intro_module} (or equation \eqref{eq_intro_iso}) implies that $\check{SH}_*(\Sigma)$ fulfills the periodicity
\[
 \check{SH}_*(\Sigma) \cong \check{SH}_{*+2I(g)}(\Sigma).
\]
While this periodicity is easy to establish for Brieskorn manifolds on the chain level (e.g.\ with the chain complex as in \cite{Fauck}), it is far less obvious on homology.

Finally, Theorem~\ref{thm_intro_module} can also be used to get some information about the usual symplectic homology $SH_*(W)$ of a Liouville filling $W$ of $\Sigma$. The long exact sequence from \cite{Ciel_Fra_Oan} gives a map
\[
 f\colon SH_*(W) \longrightarrow \check{SH}_*(\Sigma),
\]
whose kernel is a subset of the negative symplectic homology $SH_*^-(W)$. In fact, $f$ is a ring homomorphism (see Lemma~\ref{lem_f_intertwines_product} or \cite[Theorem~10.2(e)]{Ciel_Oan}), hence $SH_*(W)/\ker(f)$ is a ring and maps injectively to $\check{SH}_*(\Sigma)$. It turns out that, with the right choice of module generators, the image of $SH_*(W)/\ker(f)$ in $\check{SH}_*(\Sigma)$ is the subset of elements with non-negative powers of $s$.

\begin{cor} \label{cor_intro_SH}
 Let $\Sigma$ be as in Theorem~\ref{thm_intro_module} and $W$ a Liouville filling of $\Sigma$ with $c_1(W)=0$.
 Then, $SH_*(W)/\ker(f)$ is a free and finitely generated module over $\ZZ_2[s]$. In particular, $SH_*(W)$ is finitely generated as a $\ZZ_2$-algebra.
\end{cor}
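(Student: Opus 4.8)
\emph{Proof proposal.} The idea is to transport the module structure of Theorem~\ref{thm_intro_module} across the map $f$. By Lemma~\ref{lem_f_intertwines_product}, $f\colon SH_*(W)\to\check{SH}_*(\Sigma)$ is a homomorphism of unital rings, so $SH_*(W)/\ker(f)$ is a ring and the induced map $\bar f\colon SH_*(W)/\ker(f)\to\check{SH}_*(\Sigma)$ is an injective ring homomorphism onto the subring $\im(f)$. It therefore suffices to identify $\im(f)$ inside $\check{SH}_*(\Sigma)$ and to recognise it as a $\ZZ_2[s]$-submodule.

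The geometric input — which I would package as a lemma and prove together with (or just after) Theorem~\ref{thm_intro_module} — is the following: the module generators $e_1,\dots,e_r$ of $\check{SH}_*(\Sigma)$ can be chosen so that each $e_i$ is represented by a generator of the Morse--Bott complex sitting at a closed Reeb orbit of length at most one (this already includes the ``$H_*(W)$-part'', i.e.\ the orbits of length zero), \emph{and} so that, with respect to this choice, the chain map underlying $f$ is the inclusion of the non-negative-length part of the Morse--Bott complex of $\check{SH}_*(\Sigma)$. Once this is established, a filtration argument by Reeb length gives
\[
 \im(f)\;=\;\bigoplus_{k\ge 0}\,s^{k}\cdot\Span_{\ZZ_2}(e_1,\dots,e_r)\;=\;\ZZ_2[s]\langle e_1,\dots,e_r\rangle\;\subseteq\;\check{SH}_*(\Sigma).
\]
Here the inclusion ``$\subseteq$'' of $\im(f)$ in the right-hand side is immediate from the chain-level description, while ``$\supseteq$'' follows because each $e_i$ lies in $\im(f)$ and $s\cdot(-)=S_g$ raises the Reeb length by one, so that $s^{k}e_i=S_g^{k}(e_i)$ for $k\ge0$ is again represented in non-negative length. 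In particular $\im(f)$ is stable under multiplication by $s$, hence inherits the $\ZZ_2[s]$-module structure of $\check{SH}_*(\Sigma)$, and $\bar f$ is an isomorphism of $\ZZ_2[s]$-modules onto $\ZZ_2[s]\langle e_1,\dots,e_r\rangle$. This last module is visibly free of rank $r$ over $\ZZ_2[s]$, which gives the first assertion.

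For the ``in particular'': $SH_*(W)/\ker(f)\cong\ZZ_2[s]\langle e_1,\dots,e_r\rangle$ is module-finite over the finitely generated algebra $\ZZ_2[s]$, hence finitely generated as a $\ZZ_2$-algebra (by $s,e_1,\dots,e_r$). Since $\ker(f)\subseteq SH^-_*(W)$, which is finite-dimensional over $\ZZ_2$ under the index assumption~\eqref{eq_intro_index_cond_2}, $SH_*(W)$ is an extension of a finitely generated $\ZZ_2$-algebra by a finite-dimensional $\ZZ_2$-vector space; lifting algebra generators of $SH_*(W)/\ker(f)$ to $SH_*(W)$ and adjoining a $\ZZ_2$-basis of $\ker(f)$ produces a finite set generating $SH_*(W)$ as a $\ZZ_2$-algebra.

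The main obstacle is the displayed identification of $\im(f)$: one must compare the Morse--Bott/action spectral sequence computing $SH_*(W)$ with the one computing $\check{SH}_*(\Sigma)$ that is used in the proof of Theorem~\ref{thm_intro_module}, and verify that the map $f$ from the long exact sequence of \cite{Ciel_Fra_Oan} is induced by the obvious inclusion on the level of generators; in particular one must check that no generator of $SH_*(W)$ acquires a component in negative Reeb length after passing to homology. Everything downstream of that compatibility is formal commutative algebra.
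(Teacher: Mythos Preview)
Your approach is essentially the one the paper takes: identify $\im(f)\subset\check{SH}_*(\Sigma)$ with the part generated over $\ZZ_2[s]$ by the generators sitting in the basic action window $(-\epsilon,1-\epsilon)$, using that $f$ is (after limits) the quotient map $HF^{(-\infty,b)}\to HF^{(a,b)}$ and hence only ``sees'' orbits in regions (IV) and (V) of the $\bigvee$-shaped Hamiltonian. Your final paragraph correctly isolates the one genuine verification needed, and this is exactly the point the paper argues from the construction of $f$ in \cite{Ciel_Fra_Oan}.

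There is one small but real difference worth flagging. You take $e_1,\dots,e_r$ to be \emph{free} generators of $\check{SH}_*(\Sigma)$ over $\ZZ_2[s,s^{-1}]$ lying in the basic window, and then read off freeness of $\ZZ_2[s]\langle e_1,\dots,e_r\rangle$ directly. This requires an extra lemma: that a free basis can be chosen with representatives in $\check{SC}^{(-\epsilon,1-\epsilon)}$. The paper sidesteps this by taking arbitrary (not necessarily free) generators $g_0,\dots,g_M$ of $\ker(\dd)$ as $\ZZ_2$-linear combinations of the chain generators in that window, showing they generate $\im(f)$ over $\ZZ_2[s]$, and then invoking \emph{torsion-free} (from Lemma~\ref{lem_S_g^k_lin_indep}) \emph{plus PID structure theory} to conclude freeness. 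This is slightly more robust and avoids the question of whether free homology generators can be represented in a single action window. Your route works too once that lemma is supplied, but you should state it explicitly rather than fold it into the ``geometric input''.

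Your argument for the ``in particular'' is more detailed than the paper's (which leaves it implicit): you correctly use that $\ker(f)=\im(h)$ is contained in the image of $H_{*+n}(W,\dd W)\to SH_*(W)$ via \eqref{eq_h_factoring}, hence finite-dimensional, and then adjoin a basis. One nitpick: this finite-dimensionality does not use the index assumption~\eqref{eq_intro_index_cond_2}; it holds for any compact $W$.
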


This text is organized as follows. In Section~\ref{sec_product_structure}, we recall the definition of the pair-of-pants product and $\bigvee$-shaped symplectic homology.
Section~\ref{sec_Ham_diff} contains the main results and their proofs. After recalling some facts on the action of Hamiltonian diffeomorphisms in general, we show in Section~\ref{sec_no_filling} that in most cases, the Reeb flow does not extend to a Hamiltonian diffeomorphism on any symplectic filling, hence the need to work in the symplectization. Section~\ref{sec_def_on_symplectization} gives precise arguments how this is possible. After that, we can establish the $\ZZ_2[s,s^{-1}]$-module structure of $\check{SH}_*(\Sigma)$, though still without relating it to the pair-of-pants product. This relation is finally proven in Sections~\ref{sec_homot_inv} and \ref{sec_application_product}, while Section~\ref{sec_back_to_usual_SH} proves Corollary~\ref{cor_intro_SH}.

In Section~\ref{sec_Brieskorn_examples}, we apply the theorems from Section~\ref{sec_Ham_diff} to the example of Brieskorn manifolds, where the chain groups of $\check{SH}_*(\Sigma)$ can be computed explicitly. This section also contains comparisons to specific examples where the ring structure of symplectic homology is known, namely cotangent bundles of spheres and the $A_2$-surface singularities. The upshot of these comparisons is that the results of this paper are confirmed in these examples, but they reveal only a part of the product structure on symplectic homology.

\vspace{0.3cm}
\noindent 
\emph{Acknowledgements.} I would like to thank my advisor Kai Cieliebak for his continued support and guidance on the subject.
I also want to thank the anonymous referee for his careful reading and for pointing out some inaccuracies in an earlier version of this paper.

\section{Product structure on Hamiltonian Floer homology} \label{sec_product_structure}

\subsection{General definition} \label{sec_product_def}

The product in Hamiltonian Floer theory always involves a count of pairs-of-pants between three Hamiltonian orbits, although the precise definition varies slightly in the literature. Here, we will follow the approach from \cite{Abouzaid}. Let $\mcP \defeq \PP^1\setminus \{0,1,\infty\}$ be the Riemann sphere with three punctures, two of which are called \emph{positive} (or inputs) and one is called \emph{negative} (or the output). Fix parametrizations $[0,\infty) \times S^1$ near the positive punctures and $(-\infty, 0] \times S^1$ near the negative puncture, called \emph{cylindrical ends}.

Throughout this text, $\widehat W$ denotes the completion of a Liouville domain $W$ with boundary $\Sigma=\dd W$ and we take coefficients in $\ZZ_2$.
Given Hamiltonians $H_0, H_1, H_2 \in \mcC^\infty(\widehat W)$, almost complex structure $J_0, J_1, J_2$ and $1$-periodic orbits $\gamma_0, \gamma_1, \gamma_2$ of the Hamiltonians, respectively, we want to define the product
\begin{equation} \label{eq_product_first_def}
 HF(H_1, J_1) \times HF(H_2, J_2) \to HF(H_0, J_0).
\end{equation}

To define this product, we need the following data:
\begin{itemize}
 \item A Hamiltonian $H_\mcP$, parametrized by the pair-of-pants surface $\mcP$, such that $H_\mcP(s,t,x) = H_i(t,x)$ in the parametrization near the puncture $z_i$.
 \item An almost complex structure $J_\mcP$, parametrized by $\mcP$, such that $J_\mcP(s,t,x) = J_i(t,x)$ in the parametrization near the puncture $z_i$.
 \item A one-form $\beta \in \Omega^1(\mcP)$ which restricts to $dt$ in the parametrizations near the punctures.
\end{itemize}

Assume that $J_\mcP$ is convex near infinity, i.e.\ outside a compact set of $\widehat W$, 
\[
 d r \circ J_\mcP(s,t,x) = -e^f \lambda,
\]
where $r$ is the radial coordinate, $\lambda$ is a primitive of the symplectic form and $f$ is any smooth function.
Moreover, assume that the Hamiltonians $H_0, H_1, H_2$ are linear at infinity with slopes $b_0, b_1, b_2 \geq 0$ and $H_\mcP$ is linear at infinity with slope function $b_\mcP\colon \mcP \to \RR_+$. Then we require (for compactness of the moduli spaces below) that
\begin{equation} \label{eq_1form_compactness}
 d(b_\mcP \beta) \leq 0.
\end{equation}
By \cite[Exercise~2.3.4]{Abouzaid}, it is possible to choose $\beta$ and $H_\mcP$ such that \eqref{eq_1form_compactness} is satisfied if and only if $b_0\geq b_1+b_2$. Now, we define the moduli space of pairs-of-pants
\[
 \mcM(\gamma_1, \gamma_2, \gamma_0; \beta, H_\mcP, J_\mcP)
\]
as the set of smooth maps $u\colon \mcP \to \widehat W$ which converge to $\gamma_1, \gamma_2$ at the positive punctures and to $\gamma_0$ at the negative puncture and satisfy the Floer equation
\begin{equation} \label{eq_Floer_beta}
 \left(du - X_{H_\mcP} \otimes \beta\right)^{0,1} = \frac{1}{2} \left((du - X_{H_\mcP}\otimes \beta) + J \circ (du - X_{H_\mcP}\otimes \beta) \circ j\right) = 0.
\end{equation}
For a generic choice of $H_\mcP$ and $J_\mcP$, this moduli space is a smooth manifold of dimension
\[
 \dim(\mcM(\gamma_1, \gamma_2, \gamma_0; \beta, H_\mcP, J_\mcP)) = \mu(\gamma_1) + \mu(\gamma_2) - \mu(\gamma_0) - n,
\]
where $\mu = \mu_{CZ}$ denotes the Conley--Zehnder index. Moreover, there is a suitable compactification by adding lower-dimensional strata. In particular, for $\mu(\gamma_0) = \mu(\gamma_1) + \mu(\gamma_2) + n$, the moduli space is a finite set of points. Hence, we can define the product of $\gamma_1$ and $\gamma_2$ as
\[
 \gamma_1 \cdot \gamma_2 = \sum_{\substack{\gamma_0 \\ \mu(\gamma_0) = \mu(\gamma_1) + \mu(\gamma_2) - n}} \#_2 [\mcM(\gamma_1, \gamma_2, \gamma_0; \beta, H_\mcP, J_\mcP)] \; \gamma_0,
\]
giving the definition of \eqref{eq_product_first_def}. By \cite[Section~2.3.6]{Abouzaid}, this product behaves well with respect to continuation maps. Hence, taking direct limits on the Hamiltonians, it induces a product
\[
 SH_k(W) \times SH_\ell(W) \to SH_{k+\ell-n}(W).
\]
It turns out that this product is associative and graded commutative, although this is not obvious from the definition. Also, there is an element of $SH$ acting as a unit of this product, namely the image of the generator of $H^0(W)$ under the map $H^*(W)\cong SH^-_{n-*}(W) \to SH_{n-*}(W)$. Hence, it gives $SH$ the structure of a unital, graded-commutative ring.

\subsection{Grading and action filtration}

By definition, the pair-of-pants product has degree $-n$ in the usual grading. In order to have a product of degree zero, it can be convenient to switch to the ``product grading''
\[
 \mu_\product \defeq \mu - n,
\]
as we will do in Section~\ref{sec_Brieskorn_examples}.\footnote{
 This grading convention is consistent with \cite{Ciel_Oan, Ciel_Fra_Oan}. Note that another common convention is to use the negative of this grading.
}

\begin{rmk} \label{rmk_action_filtr}
 Although we will not need this, let us recall how the pair-of-pants product on symplectic homology respects the action filtration. For this purpose, it is convenient to use a slightly different definition of the product, in which the Hamiltonians $H_1,H_2$ and $H_0$ are positive multiples of a common Hamiltonian $H$, see e.g.\ \cite{Ritter_tqft}. (The induced product on $SH$ is still the same.) Then, by \cite[Section~16.3]{Ritter_tqft}, it holds that
 \[
  \mcA_{H_0}(\gamma_1 \cdot \gamma_2) \leq \mcA_{H_1}(\gamma_1) + \mcA_{H_2}(\gamma_2) ,
 \]
 As a consequence, the product restricts to a map
\[
\cdot \colon SH^{[a,b)} \times SH^{[a',b')} \to SH^{[\max\{a+b', a'+b\}, b+b')}, 
\]
where on the right hand side, it is necessary to divide out all generators with action less than $\max\{a+b', a'+b\}$ to make the map well-defined. For example, one does not get a product on the whole positive symplectic homology, but one can define maps
\begin{equation} \label{eq_prod_on_pos_SH}
 SH^{[\delta, b)} \times SH^{[\delta, b)} \to SH^{[b+\delta, 2b)}
\end{equation}
that contain a part of the information of the product on $SH$.
\end{rmk}

\subsection{$\bigvee$-shaped symplectic homology and its product structure} \label{sec_V_shaped}

For the purposes of this text, it will be important to have a version of symplectic homology that is defined on the symplectization of a contact manifold, without reference to a symplectic filling. This is definitely not possible for the usual $SH$, as even some of its generators live in the filling (indeed, its negative part $SH^-$ is isomorphic to the singular cohomology of the filling). 

Its positive part $SH^+$ is, under favorable conditions, independent of the filling. However, $SH^+$ does not have a product, see Remark \ref{rmk_action_filtr}. One might try to use the ``partial products'' from \eqref{eq_prod_on_pos_SH} instead, but this has the drawback that there is no unit, which is needed in some arguments below.

\begin{figure}[h] 
 \centering
 \def\svgwidth{0.35\textwidth}
 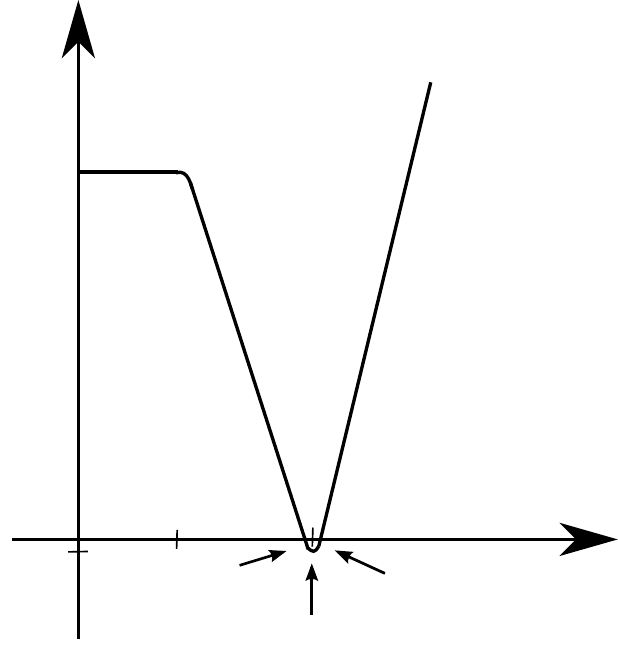
 \caption{A Hamiltonian used to define $\check{SH}$}
 \label{fig:V_shaped_H}
\end{figure}

The solution to this is to use the $\bigvee$-shaped symplectic homology $\check{SH}$ of \cite{Ciel_Fra_Oan}. Let us quickly recall how this homology theory is constructed: Take a Hamiltonian as in Figure~\ref{fig:V_shaped_H} with $\mu_1, \mu_2 \notin \Spec(\Sigma, \alpha)$. (In \cite{Ciel_Fra_Oan}, $\mu_1=\mu_2$, but it causes no problems to have different values.) The $1$-periodic orbits are concentrated in the areas (I) to (V). However, as explained in \cite[Proposition~2.9]{Ciel_Fra_Oan}, the orbits in (I) and (II) are excluded by their action. Indeed, given an action window $(a,b)$, one can choose the constants $\mu_1, \mu_2, \delta$ and $\varepsilon$ such that all generators with action in $(a,b)$ are of the following types:
\begin{itemize}
 \item Nonconstant orbits in (III), coming from negatively parametrized Reeb orbits with action greater than $a > -\mu_1$.
 \item Constant orbits in (IV), coming from the singular cohomology of $\Sigma$.
 \item Nonconstant orbits in (V), coming from positively parametrized Reeb orbits with action less than $b < \mu_2$.
\end{itemize}
Then, define
\begin{equation}\label{eq_lim_SH_1}
 \check{SH}_k^{(a,b)}(\widehat W) \defeq \varinjlim_H HF_k^{(a,b)}(H)
\end{equation}
as the direct limit as $\mu_1, \mu_2 \to \infty$, and define
\begin{equation}\label{eq_lim_SH_2}
 \check{SH}_k(\widehat W) \defeq \varinjlim_b \varprojlim_a\check{SH}_k^{(a,b)}(\widehat W),
\end{equation}
where the limits mean $b\to\infty$ and $a\to -\infty$, respectively.

By \cite[Theorem~1.5]{Ciel_Fra_Oan}, $\check{SH}(\widehat W)$ is isomorphic to the Rabinowitz Floer homology of $W$. Moreover, the positive part $\check{SH}^{(0,\infty)}(\widehat W)$ is isomorphic to the usual positive symplectic homology $SH^+(\widehat W)$, while $\check{SH}^{(-\epsilon,\epsilon)}(\widehat W)$ (for $\epsilon>0$ sufficiently small) is isomorphic to the singular cohomology of $\Sigma$.

As explained in \cite{Ciel_Oan}, the product structure described in Section~\ref{sec_product_def} also lives on $\check{SH}(\widehat W)$. Similarly to the usual symplectic homology, this product has a unit, coming from the generator of $H^0(\Sigma)$. 

\section{$S^1$-actions by loops of Hamiltonian diffeomorphisms} \label{sec_Ham_diff}

\subsection{Recollections from the closed case} \label{sec_S1_generalities}

In this section, we recall some facts from \cite{Seidel_pi_1} on the action of a loop of Hamiltonian diffeomorphisms on Floer homology on a closed symplectic manifold $(M,\omega)$. Let
\[
 g\colon S^1 = \RR/\ZZ \to \Ham(M,\omega), \qquad t\mapsto g_t
\]
be a loop of Hamiltonian diffeomorphisms based at $g_0=\id$. Denote by $K^g\colon S^1\times M \to \RR$ a Hamiltonian function that generates $g$, i.e.\ $\dd_t(g_t \cdot)=X_{K^g}(t,g_t\cdot)$.

In this text, we will only work with manifolds $(M,\omega)$ that satisfy $c_1(M)|_{\pi_2(M)} = 0$ and $\omega|_{\pi_2(M)}=0$ (actually, in the non-closed case, $\omega$ will be an exact form). Therefore, the grading and the action functional will be well-defined and we do not need any cover of the loop space or Novikov coefficients (see \cite[Section~2.4]{Ritter_neg_line}).

The loop $g$ acts on the loop space $C^\infty(S^1, M)$ by
\[
 (g\cdot \gamma)(t) = g_t(\gamma(t)).
\]
Define the pullback $(g^*H, g^*J)$ of a pair of Hamiltonian $H$ and almost complex structure $J$ as
\[
 (g^*H_t)(x) = H_t(g_t(x)) - K_t^g(g_t(x)), \qquad g^*J_t = dg_t^{-1} \circ J_t \circ dg_t.
\]
Similarly, define the pushforward $(g_*H, g_*J)$ as
\[
 (g_*H_t)(x) = \left((g^{-1})^*H_t \right)(x) = H_t(g_t^{-1}(x)) + K_t^g(g_t^{-1}(x)), \qquad g_*J_t = (g^{-1})^*J_t = dg_t \circ J_t \circ dg_t^{-1}.
\]

\begin{lem} \label{lem_g_properties}
The action of $g$ has the following properties:
\begin{enumerate}
 \item $g^*(d\mcA_H) = d\mcA_{g^*H}$, where $\mcA_H(\gamma) = \int_{D^2} \bar{\gamma}^*(\omega) - \int_{S^1} H_t(\gamma(t)) dt$ is the usual symplectic action functional.
 Equivalently, $\mcA_{g^*H} = g^*\mcA_H$ up to a constant (depending on the choice of additive constant for $K^g$).
 \item $1$-periodic orbits of $H$ correspond bijectively to $1$-periodic orbits of $g_*H$ via $x \mapsto g \cdot x$
 \item Floer trajectories satisfy the bijective correspondence
 \[
  \mcM(\gamma_+,\gamma_-; H,J) \stackrel{\cong}{\longrightarrow} \mcM(g\cdot \gamma_+, g\cdot \gamma_-; g_*H, g_*J), \qquad u \mapsto g \cdot u,  
 \]
 and similarly for the moduli spaces appearing in the continuation maps.
\end{enumerate}
\end{lem}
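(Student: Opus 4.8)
The plan is to verify the three statements essentially by direct computation, exploiting that all the pulled-back objects are defined precisely so as to make the relevant equations transform covariantly. The conceptual point, which I would emphasize at the outset, is that the map $\gamma \mapsto g\cdot\gamma$ on the loop space is the time-dependent ``change of coordinates'' generated by the isotopy $g_t$, and the definitions of $g^*H$, $g^*J$ (and dually $g_*H$, $g_*J$) are exactly the ones under which the Floer data are carried along by this change of coordinates. So the three claims are three incarnations of one naturality statement: (1) at the level of the action functional, (2) at the level of its critical points, (3) at the level of its negative gradient flow lines.

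For (1), I would compute $d\mcA_{g^*H}$ and $g^*(d\mcA_H)$ as $1$-forms on the loop space and check they agree. Writing the differential of the action functional at $\gamma$ in direction $\xi$ as $\int_{S^1}\omega(\dot\gamma - X_{H_t}(\gamma),\xi)\,dt$, one substitutes $\gamma \mapsto g_t(\gamma(t))$ in $g^*(d\mcA_H)$, uses $\dd_t(g_t(\gamma(t))) = X_{K^g}(t,g_t\gamma) + dg_t(\dot\gamma)$, the identity $dg_t^*\omega = \omega$, and the fact that for any Hamiltonian $F$ one has $dg_t^{-1}X_{F\circ g_t} = X_{(F\circ g_t) - K^g\circ g_t}$ up to the Hamiltonian of the isotopy — this is the standard composition-of-Hamiltonians formula, and it is exactly what makes the extra $-K^g$ term in the definition of $g^*H$ appear. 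Matching terms gives $g^*(d\mcA_H) = d\mcA_{g^*H}$; integrating, $\mcA_{g^*H}$ and $g^*\mcA_H$ differ by a constant that depends on the normalization of $K^g$. The statement for $g_*$ follows by replacing $g$ with $g^{-1}$ and noting $K^{g^{-1}}_t = -K^g_t\circ g_t$.

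Claim (2) is then a formal consequence of (1) combined with the observation that $\gamma\mapsto g\cdot\gamma$ is a diffeomorphism of the loop space: critical points of $\mcA_{g_*H}$ correspond to critical points of $(g^{-1})^*\mcA_{g_*H} = \mcA_H$ (up to the constant, which does not affect critical points) under $x\mapsto g\cdot x$, and critical points of $\mcA_H$ are precisely the $1$-periodic orbits of $H$. Alternatively one checks directly that $\dd_t(g_t x(t)) = X_{g_*H}(t, g_t x(t))$ whenever $\dot x = X_H(x)$, again using the composition formula. For claim (3), I would write the Floer equation $\dd_s u + J_t(u)(\dd_t u - X_{H_t}(u)) = 0$ and substitute $v(s,t) = g_t(u(s,t))$; using $\dd_s v = dg_t(\dd_s u)$, $\dd_t v = dg_t(\dd_t u) + X_{K^g}(t,v)$, and $g_*J_t = dg_t\circ J_t\circ dg_t^{-1}$, one sees that $v$ solves the Floer equation for $(g_*H, g_*J)$ with asymptotics $g\cdot\gamma_\pm$; the inverse map is $v\mapsto g^{-1}\cdot v$, giving the claimed bijection. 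Since the substitution is a diffeomorphism on maps, it identifies the moduli spaces including their compactifications and their cut-out transversality data, so the same argument applies verbatim to the moduli spaces defining continuation maps (where $H$ and $J$ are additionally $s$-dependent, but the loop $g_t$ acts only in the $t$-variable, so nothing changes).

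The main obstacle, such as it is, is purely bookkeeping: getting the composition-of-Hamiltonians formula and all the sign/convention conventions (Hamiltonian vector field sign, direction of the action functional, the $dt$ versus $-dt$ in the Floer equation) mutually consistent so that the $-K^g$ correction term lands with the right sign in both $g^*H$ and $g_*H$. Once conventions are pinned down in statement (1), statements (2) and (3) require no new ideas. I expect these verifications are standard enough — they appear in \cite{Seidel_pi_1} and \cite[Section~2.4]{Ritter_neg_line} — that it suffices to carry out the computation for (1) in detail and indicate that (2) and (3) follow by the same substitution, together with the functoriality $(g^{-1})^*\circ g^* = \id$.
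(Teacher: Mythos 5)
The paper itself offers no proof of Lemma~\ref{lem_g_properties}: after the statement it simply writes ``See \cite[Section~4]{Seidel_pi_1}.'' Your proposal correctly reconstructs the standard argument from that reference, and the three parts are handled in the right way: the $1$-form computation on loop space for~(1), the observation that $\gamma\mapsto g\cdot\gamma$ is a diffeomorphism of loop space for~(2), and the pointwise substitution $v(s,t)=g_t(u(s,t))$ for~(3), including the correct remark that the same substitution works unchanged for $s$-dependent continuation data because $g$ acts only in the $t$-variable.

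One small quibble: the auxiliary identity you quote in part~(1), ``$dg_t^{-1}X_{F\circ g_t}=X_{(F\circ g_t)-K^g\circ g_t}$ up to the Hamiltonian of the isotopy,'' is garbled as written — the left-hand side does not type-check as stated, and the phrase ``up to the Hamiltonian of the isotopy'' leaves the reader to guess which term is meant. The clean identity you actually need (and which your part~(3) computation effectively uses) is
\[
 \dd_t\bigl(g_t\gamma(t)\bigr) - X_{H_t}\bigl(g_t\gamma(t)\bigr) \;=\; dg_t\Bigl(\dot\gamma(t) - X_{g^*H_t}(\gamma(t))\Bigr),
\]
which follows from the chain rule $\dd_t(g_t\gamma)=dg_t(\dot\gamma)+X_{K_t^g}(g_t\gamma)$ together with the symplectomorphism covariance $X_{F\circ g_t}=dg_t^{-1}\circ X_F\circ g_t$. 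With that replacement the argument is airtight, and your self-assessment that the only real work is sign/convention bookkeeping is accurate.
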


See \cite[Section~4]{Seidel_pi_1} for the proof of Lemma \ref{lem_g_properties}.
As for the grading, the Maslov index $I(g)\in \ZZ$ is defined as follows. For any contractible loop $\gamma\in C^\infty(S^1, M)$, choose a filling disk, which induces a symplectic trivialization
\[
 \tau_\gamma \colon \gamma^*(TM) \to S^1\times (\RR^{2n}, \omega_0)
\]
of the pullback bundle $\gamma^*(TM)$. By \cite[Lemma~2.2]{Seidel_pi_1}, $g \cdot \gamma$ is also contractible. Thus, $g(t)$ induces a loop of symplectomorphisms $\ell(t)\in \Sp(2n,\RR)$ by
\[
 \ell(t) = \tau_{g\gamma}(t) \circ dg_t(\gamma(t)) \circ \tau_\gamma(t)^{-1}.
\]
Define the Maslov index $I(g)\defeq \deg(\ell)$, where $\deg\colon H_1(\Sp(2n,\RR)) \to \ZZ$ is the isomorphism induced by the determinant on $U(n)\subset Sp(2n,\RR)$.
By the assumption that $c_1(M)|_{\pi_2(M)} = 0$, this index is independent of the choice of filling disks. In fact, it is also independent of $\gamma$ and only depends on the homotopy class of $g_t$ in $\pi_0(\Ham(M,\omega))$.
So 
\[
 \mu(g \cdot \gamma) = \mu(\gamma) + 2I(g),
\]
by one of the axioms of the Conley--Zehnder index.

\begin{cor} \label{cor_S_g_iso}
 The loop $g_t$ induces a map on Floer homology
 \[
  S_g \colon HF_*(H) \to HF_{*+2I(g)}(g_*H).
 \]
 As $g^{-1}$ gives the inverse map, $S_g$ is in fact an isomorphism.
\end{cor}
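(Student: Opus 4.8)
The plan is to assemble the map $S_g$ directly from the three bijective correspondences of Lemma \ref{lem_g_properties} together with the index shift recorded just above. First I would observe that part (2) of Lemma \ref{lem_g_properties} gives a bijection between the set of $1$-periodic orbits of $H$ and those of $g_*H$, via $x\mapsto g\cdot x$, and that this bijection shifts the Conley--Zehnder index by $2I(g)$, i.e.\ $\mu(g\cdot\gamma)=\mu(\gamma)+2I(g)$. Hence the induced $\ZZ_2$-linear map on chain groups, sending the generator $\gamma$ to the generator $g\cdot\gamma$, is a degree-$2I(g)$ isomorphism $CF_*(H)\to CF_{*+2I(g)}(g_*H)$.

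Next I would check that this chain-level map commutes with the Floer differentials (up to the degree shift). This is exactly part (3) of Lemma \ref{lem_g_properties}: the bijection $u\mapsto g\cdot u$ between moduli spaces $\mcM(\gamma_+,\gamma_-;H,J)\to\mcM(g\cdot\gamma_+,g\cdot\gamma_-;g_*H,g_*J)$ restricts to a bijection between the zero-dimensional (hence, after compactification, finite) components, so the mod-$2$ counts agree. Therefore $\dd_{g_*H}\circ(g\cdot)=(g\cdot)\circ\dd_H$ on the nose, and $g\cdot(-)$ descends to a homomorphism $S_g\colon HF_*(H)\to HF_{*+2I(g)}(g_*H)$. (Part (1), the compatibility $\mcA_{g^*H}=g^*\mcA_H$ up to an additive constant, is what guarantees the correspondence is action-preserving up to a shift, so everything is compatible with the filtrations and with passing to direct limits; this is not strictly needed to get the map but it is what makes it well-behaved.)

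Finally, for invertibility I would apply the same construction to the reversed loop. Running the whole argument with $g^{-1}$ in place of $g$ produces a chain map $(g^{-1}\cdot)$ from $CF_*(g_*H)=CF_*((g^{-1})_*{}^{-1}H)$ back to $CF_*(H)$ — more precisely, $g^{-1}$ pushes $g_*H$ forward to $(g^{-1})_*(g_*H)=H$, and on orbits the composition $x\mapsto g\cdot x\mapsto g^{-1}\cdot(g\cdot x)=x$ is the identity, since $(g^{-1})_t=g_t^{-1}$ pointwise. Likewise $u\mapsto g\cdot u\mapsto g^{-1}\cdot(g\cdot u)=u$ on Floer trajectories. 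Hence the two chain maps are mutually inverse, and $I(g^{-1})=-I(g)$ makes the degrees match up; passing to homology, $S_{g^{-1}}$ is a two-sided inverse of $S_g$, so $S_g$ is an isomorphism.

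The only real subtlety — and the step I would be most careful about — is transversality/compatibility of the almost complex structures and Hamiltonians used on the two sides: the correspondence in Lemma \ref{lem_g_properties}(3) is stated for the specific pair $(g_*H,g_*J)$, so to get a map on $HF_*$ defined with one's favorite generic data one must compose with a continuation isomorphism $HF_*(g_*H,g_*J)\cong HF_*(H',J')$. Since continuation maps are isomorphisms and, by \cite[Section~4]{Seidel_pi_1}, the resulting map on homology is independent of the auxiliary choices and depends only on the homotopy class of $g$ in $\pi_0(\Ham(M,\omega))$, this causes no essential difficulty; it is just bookkeeping that I would state explicitly rather than grind through.
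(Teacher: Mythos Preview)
Your proposal is correct and is precisely the intended argument: the paper does not give a separate proof of this corollary, treating it as an immediate consequence of Lemma~\ref{lem_g_properties} together with the index-shift formula $\mu(g\cdot\gamma)=\mu(\gamma)+2I(g)$ stated just before. Your write-up simply spells out this deduction in detail, including the observation that $S_{g^{-1}}$ furnishes the inverse, so there is nothing to add.
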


The following proposition gives two further properties, whose proofs are a bit more involved (see \cite[Sections~5 and~6]{Seidel_pi_1}):

\begin{prop} 
 \begin{enumerate}
  \item If $g_t$ and $\tilde g_t$ are homotopic through a homotopy of loops of Hamiltonian diffeomorphisms $g_t^r$ with $g_0^r = \id$ for all $r$, then 
  \[
   S_g = S_{\tilde g} \colon HF_*(M, \omega) \to HF_{*+2I(g)}(M, \omega).
  \]
  \item 
  The isomorphism $S_g$ and the pair-of-pants product $\cdot$ fulfill the relation
  \[
   S_g(x\cdot y) = S_g(x) \cdot y.
  \]
 \end{enumerate}
 \label{prop_less_basic}
\end{prop}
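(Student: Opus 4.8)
The plan is to prove each of the two assertions of Proposition~\ref{prop_less_basic} separately, following the strategy of \cite[Sections~5 and~6]{Seidel_pi_1} but adapting the bookkeeping to the (exact, aspherical) setting at hand.

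\textbf{Part (1): homotopy invariance of $S_g$.} Suppose $g_t^r$, $r\in[0,1]$, is a homotopy of loops of Hamiltonian diffeomorphisms with $g_0^r=\id$, connecting $g=g^0$ to $\tilde g=g^1$. First I would observe that $g$ and $\tilde g$ then lie in the same component of $\pi_0(\Ham(M,\omega))$, so $I(g)=I(\tilde g)$ and the two maps $S_g,S_{\tilde g}$ at least have the same source and target. The natural approach is to build a chain homotopy. Choose a family $K^{g^r}$ of generating Hamiltonians depending smoothly on $r$; this induces, for each $r$, a chain map $S_{g^r}$ (more precisely a chain-level representative after composing with a continuation map to a fixed target pair $(H,J)$, since $g_*^rH$ varies with $r$). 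The idea is to count solutions of a parametrized Floer equation on the cylinder in which the $r$-parameter is allowed to vary, i.e.\ consider the moduli space over $r\in[0,1]$ of $(g^r)$-twisted Floer cylinders (equivalently, apply the bijection from Lemma~\ref{lem_g_properties}(3) to turn these into genuine Floer cylinders for an $r$-dependent Hamiltonian and then count the resulting parametrized moduli space). The boundary of the one-dimensional component of this parametrized moduli space, plus the usual breaking, yields the relation $S_{g^0}-S_{g^1}=\partial K + K\partial$ on homology after passing to the direct/inverse limits defining $\check{SH}$. The only subtlety is to make all choices (generating Hamiltonians, almost complex structures, continuation data) compatible with the limits in \eqref{eq_lim_SH_1}--\eqref{eq_lim_SH_2}, and to ensure the SFT-type compactness from Proposition~\ref{prop_intro_indep} still applies uniformly in $r$; this is routine given the index condition \eqref{eq_intro_index_cond_2}.

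\textbf{Part (2): compatibility with the pair-of-pants product.} The key geometric input is that the action of $g$ on the loop space extends to an action on maps from the pair-of-pants $\mcP$ that is asymmetric in the three punctures: one applies $g_t$ only near one of the \emph{positive} punctures, say the one carrying $y$, interpolating to the identity over the rest of $\mcP$. Concretely, fix a cutoff function $\rho\colon\mcP\to[0,1]$ equal to $1$ on the cylindrical end at the $y$-puncture and equal to $0$ near the other two ends, and define a new pair-of-pants datum $(\rho^*H_\mcP,\rho^*J_\mcP,\beta)$ by twisting the given one by $g$ weighted by $\rho$ (this is exactly the pair-of-pants analogue of the pullback $(g^*H,g^*J)$ from Section~\ref{sec_S1_generalities}; the extra term in the Hamiltonian involves $\rho$ and $d\rho$). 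Then $u\mapsto$ (the $\rho$-weighted action of $g$ on $u$) gives a bijection of moduli spaces
\[
 \mcM(\gamma_1,\gamma_2,\gamma_0;\beta,H_\mcP,J_\mcP) \;\cong\; \mcM(g\cdot\gamma_1,\gamma_2,\gamma_0;\beta,\rho^*H_\mcP,\rho^*J_\mcP),
\]
exactly as in Lemma~\ref{lem_g_properties}(3). Counting the left side computes $x\cdot y$ (well, $\gamma_1\cdot\gamma_2$); counting the right side, after a homotopy of the pair-of-pants data back to a standard choice (using that the space of admissible $(H_\mcP,J_\mcP,\beta)$ subject to \eqref{eq_1form_compactness} is connected, and invoking the continuation-invariance of the product from \cite[Section~2.3.6]{Abouzaid}), computes $S_g(x)\cdot y$. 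Equating the two gives $S_g(x\cdot y)=S_g(x)\cdot y$ after taking limits.

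\textbf{Main obstacle.} I expect the real work to be in Part (2), specifically in checking that the $\rho$-weighted twisting of the pair-of-pants Floer equation preserves the $C^0$/compactness estimates: the weighting by $\rho$ introduces a $d\rho$-term supported in a compact region of $\mcP$, and one must verify that this does not destroy the a priori energy bound nor the convexity-at-infinity condition $dr\circ J_\mcP=-e^f\lambda$ that guarantees no escape to infinity (this is where working in the symplectization, rather than a filling, makes the $C^0$-estimate delicate and where the stronger index condition \eqref{eq_intro_index_cond_2} is used). A secondary but genuine point is bookkeeping of the grading: one must confirm that the twisted moduli space has the expected dimension, i.e.\ that the Maslov shift $2I(g)$ enters on exactly one input, which follows from the puncture-asymmetry of $\rho$ together with the index additivity used to define the product. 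Once these analytic points are in place, the algebra — passing through continuation maps and the direct/inverse limits of Section~\ref{sec_V_shaped} — is formal.
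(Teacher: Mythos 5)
The statement you are proving—Proposition~\ref{prop_less_basic}—is stated for a \emph{closed} symplectic manifold $(M,\omega)$ and is explicitly a recollection of Seidel's results; the paper's ``proof'' is simply the citation to \cite[Sections~5 and~6]{Seidel_pi_1}. Your proposal, however, repeatedly invokes the direct/inverse limits defining $\check{SH}$, the SFT-compactness of Proposition~\ref{prop_intro_indep}, the index condition \eqref{eq_intro_index_cond_2}, and concerns about escape to infinity in the symplectization—none of which are relevant for a closed $M$. It appears you have conflated this proposition with the paper's later symplectization analogues, Propositions~\ref{prop_homot_invar} and~\ref{prop_relation_product}.

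Your Part~(1) is essentially the standard parametrized-moduli-space chain-homotopy argument, matching Seidel (and the paper's own Proposition~\ref{prop_homot_invar}); that is fine.

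Part~(2) contains a genuine structural error. You propose to twist the pair-of-pants Floer data by post-composing with $g$ only near \emph{one} input puncture and interpolating to the identity over the rest of $\mcP$, obtaining a bijection
$\mcM(\gamma_1,\gamma_2,\gamma_0;\ldots) \cong \mcM(g\cdot\gamma_1,\gamma_2,\gamma_0;\ldots)$.
This is obstructed: an interpolating family $\Psi\colon\mcP\to\Ham(M,\omega)$ equal to $g_t$ on the cylindrical end at one puncture and to $\id$ on the other two ends can exist only if $g$ is null-homotopic in $\Ham(M,\omega)$, since the three peripheral loops of $\mcP$ multiply to the trivial class in $\pi_1(\mcP)$ and their $\Psi$-images must therefore multiply to $1$ in $\pi_1(\Ham)$. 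Moreover, even granting such a $\Psi$, the proposed bijection compares $\gamma_1\cdot\gamma_2$ with $(g\cdot\gamma_1)\cdot\gamma_2$ paired against the \emph{same} output $\gamma_0$; this would prove $\langle x\cdot y,z\rangle=\langle S_g(x)\cdot y,z\rangle$, not the asserted $S_g(x\cdot y)=S_g(x)\cdot y$, which requires $g$ to act on the output as well. The correct route (Seidel's, and the paper's in the proof of Proposition~\ref{prop_relation_product}) is to first use Part~(1) to homotope $g_t$ so that $g_t=\id$ for $t$ near $0$, model $\mcP$ as $\RR\times S^1\setminus\{(0,0)\}$ with a cylindrical parametrization near $(0,0)$, and apply $g$ \emph{globally}: the asymptotics at $\pm\infty$ (one input and the output) are twisted by $g$, while the asymptotics at $(0,0)$ are unaffected because $g_t=\id$ there. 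This yields the bijection $\mcM(\gamma_+,\gamma_0,\gamma_-)\cong\mcM(g\cdot\gamma_+,\gamma_0,g\cdot\gamma_-)$ and hence the relation. There is no homotopy-class obstruction because $g$ acts on one input and the output (whose peripheral classes cancel), and no $d\rho$-term to worry about.

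Finally, your ``Main obstacle'' paragraph is aimed at the wrong target: for closed $M$ there is no escape to infinity and no convexity-at-infinity hypothesis to preserve, so the compactness concerns you raise there do not apply to this statement (they become relevant for the symplectization version, Proposition~\ref{prop_relation_product}, where the paper handles them via an analogue of Proposition~\ref{prop_cutoff}).
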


\subsection{$S^1$-actions by Hamiltonian loops on $\widehat W$} \label{sec_no_filling}

All of the statements of Section~\ref{sec_S1_generalities}, including Proposition~\ref{prop_less_basic}, admit a rather straightforward generalization to symplectic homology, provided that the filling $W$ admits a Hamiltonian $S^1$-action. This generalization has been worked out by Ritter in \cite{Ritter_neg_line}. Unfortunately, in many examples, one has a suitable $S^1$-action (e.g.\ by the Reeb flow) only on the contact manifold (and hence on its symplectization), but not on the filling. Indeed, the following lemma shows that in many cases, the $S^1$-action cannot be extended to a Liouville filling. 

\begin{lem}[Corollary from \cite{Ritter_neg_line}] \label{lem_no_S1_filling}
Let $\Sigma$ be a contact manifold with periodic Reeb flow and $W$ a Liouville filling of $\Sigma$ (with arbitrary first Chern class) such that $SH_*(W)$ does not vanish. Then, the $S^1$-action on $\Sigma$ by the Reeb flow does not extend to an $S^1$-action of Hamiltonian diffeomorphisms on $W$. 
\end{lem}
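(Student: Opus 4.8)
The plan is to argue by contradiction: suppose the Reeb $S^1$-action on $\Sigma$ does extend to a loop of Hamiltonian diffeomorphisms $g\colon S^1 \to \Ham(\widehat W, d\lambda)$ with $g_0 = \id$, generated by some Hamiltonian $K^g$. Because the action restricts to the Reeb flow on the contact boundary (period normalized to one), the time-one map $g_1$ is the identity, and the generating Hamiltonian $K^g_t$ can be taken to agree, near infinity in the cylindrical end $\RR_+ \times \Sigma$, with the Hamiltonian $H(r,x) = r$ whose flow is the Reeb flow (up to reparametrization of the $\RR_+$-coordinate). The key point is that, by the closed-case machinery of Section~\ref{sec_S1_generalities} as adapted to the Liouville setting in \cite{Ritter_neg_line}, such a loop $g$ induces for every admissible Hamiltonian $H$ an isomorphism $S_g\colon HF_*(H) \to HF_{*+2I(g)}(g_*H)$, compatible with continuation maps, and hence in the direct limit an isomorphism
\[
 S_g \colon SH_*(W) \stackrel{\cong}{\longrightarrow} SH_{*+2I(g)}(W).
\]

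The next step is to pin down what $S_g$ does on generators, using Lemma~\ref{lem_g_properties}(1)--(2): a $1$-periodic orbit $\gamma$ of $H$ in the cylindrical end, lying over a Reeb orbit $c$ of period $T$ at radial level $r$, is sent by $\gamma \mapsto g\cdot\gamma$ to a $1$-periodic orbit of $g_*H$ over the Reeb orbit obtained from $c$ by composing with one extra loop of the Reeb flow; i.e.\ $S_g$ shifts the underlying Reeb orbit by the principal orbit and raises its period by one. In action terms, $\mcA_{g^*H} = g^*\mcA_H$ up to a constant, so $S_g$ shifts action by a fixed amount $-\int K^g$ evaluated suitably — in any case, $S_g$ does not preserve the action filtration but translates it by a bounded amount. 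Iterating $S_g$ therefore produces, from any single nonconstant generator, an infinite family of generators of strictly increasing action, all pairwise distinct. Since $S_g$ is an isomorphism, these give linearly independent classes in $SH_*(W)$ provided $SH_*(W)$ is nonzero in the relevant degrees; but that is automatic here since $SH(W)$ has infinite rank, so in particular it is nonzero and contains a nonconstant generator surviving to homology. This contradicts... no — one must be slightly more careful: infinite rank of $SH(W)$ is what we are trying to reconcile with, not contradict. The actual contradiction comes from the grading together with condition on $I(g)$: if $I(g) = 0$, then $S_g$ is a filtered-up-to-constant automorphism of $SH_*(W)$ in each fixed degree $*$, and since the action spectrum of $\Sigma$ is discrete and bounded below on each degree (because $SH(W)$ is the limit of finite-dimensional $HF^{(a,b)}$ and each degree of $SH^-$ is finite-dimensional while $SH^+$ in each degree is finite-dimensional by the index condition), the orbit of any nonzero class under the $\ZZ$-action generated by $S_g$ would be infinite inside a single finitely-generated piece — impossible. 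If $I(g) \neq 0$, then $S_g$ relates $SH_*$ to $SH_{*+2I(g)}$; composing with the action shift shows $SH_*(W)$ is "$2I(g)$-periodic up to an action translation", which again forces each graded piece near the relevant degree to be infinite-dimensional in a controlled action window — contradicting that $HF^{(a,b)}$ is finite-dimensional for fixed $a,b$.

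The cleanest formulation, which I would ultimately adopt, is: the extension $g$ to $\widehat W$ would give an action of $\ZZ$ on the finitely-generated-in-each-action-window groups $SH^{(a,b)}_*(W)$ that shifts the action window by a fixed nonzero amount $\kappa > 0$ (the period increment) while remaining injective; pushing forward a single class from a bounded window repeatedly fills unboundedly many disjoint windows with nonzero classes, so for any fixed truncation $SH^{(-\infty, b)}_*$ one would still have finiteness in each degree — the contradiction is that $S_g$ would have to be simultaneously an isomorphism onto $SH_{*+2I(g)}$ and strictly increase action, which for the truncated theory $SH^{<b}$ (which is what the Reeb flow naturally acts on, since high-action generators escape) is impossible as a strictly action-increasing injective endomorphism of a graded vector space that is finite-dimensional in each degree cannot be surjective in each degree unless the space is zero. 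Thus no such $g$ exists. I expect the main obstacle to be making precise the claim that the Reeb flow acts on a \emph{truncated} or \emph{positive} version of $SH$ in a well-defined, action-strictly-increasing way — i.e.\ controlling the boundary behaviour of $g$ and of $K^g$ on the filling $W$ itself, where a priori $g$ need not interact nicely with the Liouville vector field, and checking that the continuation maps used to pass to the limit are genuinely compatible with $S_g$ in the non-compact setting. This is exactly the kind of boundary/compactness bookkeeping handled in \cite{Ritter_neg_line} and \cite{Ciel_Fra_Oan}, and I would cite those for the analytic input while giving the index-and-action counting argument above for the contradiction.
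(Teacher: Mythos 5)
Your proposal circles around the right idea --- iterating the isomorphism $S_g$ and comparing truncated Floer homologies --- but it never quite lands on a valid contradiction, and the ``cleanest formulation'' you settle on is actually based on a false premise. You assert that $S_g$ is ``a strictly action-increasing injective endomorphism'' of the truncated theory $SH^{<b}$ and therefore cannot be surjective. But $S_g$ is not an endomorphism of $SH^{<b}$: because it raises action and slope, it is a map $SH^{<b}(W)\to SH^{<b+1}(W)$ between two \emph{different} truncations. Such a map being an isomorphism carries no contradiction in itself, and your earlier attempts (that pushing a nonzero class around would produce infinitely many independent classes ``inside a single finitely-generated piece'') run into the same difficulty --- the classes escape the piece because the action increases.

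The ingredient you are missing is the anchor at small slope. Starting from a Hamiltonian $H_0$ of slope $\epsilon>0$ small, one has $HF(H_0)\cong SH^{<\epsilon}(W)\cong H^*(W)$, which is \emph{finite-dimensional}. Pushing forward repeatedly by the loop $g$ produces Hamiltonians $H_i = g_*H_{i-1}$ of slope $i+\epsilon$, and $S_g$ furnishes isomorphisms
\[
SH^{<\epsilon}(W) \cong SH^{<1+\epsilon}(W) \cong \cdots \cong SH^{<i+\epsilon}(W) \cong \cdots .
\]
Hence all the truncations $SH^{<i+\epsilon}(W)$ have the same finite rank $d = \dim H^*(W)$. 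Since $SH(W) = \varinjlim_i SH^{<i+\epsilon}(W)$ and any linearly independent family in a direct limit lifts to some stage, $\rank SH(W) \le d < \infty$ --- contradicting the hypothesis that $SH(W)$ has infinite rank. That is the paper's argument, and it is considerably shorter and more robust than the degree-and-action bookkeeping you attempt: in particular it needs no case split on $I(g)$, no discreteness/boundedness claims about the action spectrum (your claim that ``$SH^+$ in each degree is finite-dimensional by the index condition'' is not justified and is false in general), and no discussion of surjectivity within a fixed degree. You do correctly invoke the Ritter-style setup for $S_g$ on the completion $\widehat W$ and the compatibility with continuation maps, so the analytic backbone of your proposal is sound; it is the final combinatorial/algebraic step that needs to be replaced by the ``small-slope anchor plus isomorphic tower plus direct limit'' argument.
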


\begin{proof}
 This lemma follows immediately from \cite[Corollary\ 2]{Ritter_neg_line}. The reasoning goes like this:
 
 If the $S^1$-action does extend, we get the map $S_g$. From this map, one can construct an endomorphism $r_g$ from the quantum cohomology $QM^*(W)$ to itself (basically as the composition of $S_g$ with a continuation map and the isomorphisms between $QM^*(W)$ and Floer homology of the zero Hamiltonian). By \cite[Theorem\ 1]{Ritter_neg_line}, the symplectic homology of $W$ is a quotient of $QM^*(W)$ by the generalized $0$-eigenspace of $r_g$, i.e.
 \[
  SH_*(W) \cong QH^*(W) / \ker(r_g^k)
 \]
 for $k$ sufficiently large.
 In our context, $W$ is exact, so its quantum cohomology reduces to its ordinary cohomology with the cup product. For this case, \cite{Ritter_neg_line} shows that $r_g$ is nilpotent, hence $SH_*(W)=0$.
\end{proof}

Note that the assumption $SH \neq 0$ is necessary, since otherwise, the ball in $\CC^n$ would provide a counterexample.

Lemma~\ref{lem_no_S1_filling} can be applied directly to Brieskorn manifolds. 
Indeed, by \cite[Theorem~6.3]{KvK}, the standard filling $W$ of any Brieskorn manifold $\Sigma(a)$ (with $a_j\geq 2$ for all $j$) fulfills $SH(W)\neq 0$. Hence, the $S^1$-action $g_t$ does not extend to $W$.

\begin{rmk}
 It is instructive to consider the example $\Sigma(2,\ldots, 2)$, which is contactomorphic to the unit cotangent bundle $S^*S^n$ of $S^n$. The $S^1$-action by the Reeb flow agrees with the geodesic flow for the standard Riemannian metric on $S^n$. While the geodesic flow extends to the filling $D^*S^n$, the period varies, so this does not give an $S^1$-action. On the other hand, the normalized geodesic flow is an $S^1$-action, but it does not extend across the zero-section in $D^*S^n$.
\end{rmk}

Because of this non-existence, the only way one can hope to apply the results of Section~\ref{sec_S1_generalities} to Brieskorn manifolds is to use a version of symplectic homology that can be defined purely on the symplectization. In the next section, we show that this is possible with $\check{SH}$ in many cases. 

\subsection{Defining $\check{SH}$ on $\RR_+ \times \Sigma$} \label{sec_def_on_symplectization}

We take the model $(\RR_+ \times \Sigma, \omega = d(r\alpha))$ for the symplectization. An $\omega$-compatible almost complex structure $J_t$ is called \emph{SFT-like} if it satisfies
\begin{itemize}
 \item $J_t (r\dd_r) = R_\alpha$, where $R_\alpha$ denotes the Reeb vector field.
 \item $J_t$ preserves the contact distribution $\xi=\ker(\alpha)$.
 \item $J_t$ is invariant under translations $r\mapsto e^c r$ for $c\in\RR$.
\end{itemize}
Now, fix a Hamiltonian $H = H_{\mu_1,\mu_2}$ as in Figure~\ref{fig:V_shaped_H} and an $\omega$-compatible almost complex structure $J_t$ which is SFT-like near the negative end of the symplectization.

\begin{lem} \label{lem_cyl_cpt}
Assume $c_1(\Sigma)=0$ and $\mu_{CZ}(c) > 3-n$ for all contractible Reeb orbits $c$.
Let $\gamma_+, \gamma_-$ be two Hamiltonian orbits in the part where $H$ is convex with $\mu(\gamma_+) - \mu(\gamma_-)=1$. Then, the moduli space $\mcM^{\RR_+ \times \Sigma}(\gamma_+, \gamma_-; H, J)$ is compact, i.e.\ the Floer cylinders do not escape to the negative end of the symplectization.
\end{lem}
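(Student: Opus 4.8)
The plan is the standard one --- combine an a priori energy bound with SFT compactness, and use the index condition precisely to exclude the configurations that would witness an escape to $\{r\to 0\}$. I would first record the energy identity $E(u)=\mcA_H(\gamma_+)-\mcA_H(\gamma_-)$ for $u\in\mcM^{\RR_+\times\Sigma}(\gamma_+,\gamma_-;H,J)$, so that the whole moduli space has energy bounded in terms of the fixed asymptotics. The positive end is harmless: since $H$ has slope $\mu_2\notin\Spec(\Sigma,\alpha)$ near $r=+\infty$ and $J$ is of contact type there, the usual maximum principle confines the cylinders to a region $\{r\le R\}$. So the only possible loss of compactness is that a sequence $u_n$ attains arbitrarily small values of $r$, and this is what must be ruled out.

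Suppose it happens. Near the negative end the Hamiltonian has locally constant slope and $J$ is SFT-like, so after conjugating away the Hamiltonian term there the cylinders become honest holomorphic curves, and SFT compactness (as in \cite{Ciel_Fra_Oan}) applies: a subsequence of $u_n$ converges to a holomorphic building. Since $\mu(\gamma_+)-\mu(\gamma_-)=1$, there is no room to break off a second honest Floer cylinder, so the building has a single Floer level $u_0$, still asymptotic to $\gamma_\pm$ but now carrying $m\ge 1$ additional negative punctures asymptotic to closed Reeb orbits $c_1,\dots,c_m$ --- the presence of at least one such puncture being exactly what ``escape to the negative end'' means --- with a (possibly trivial) holomorphic building in $\RR\times\Sigma$ attached below each $c_i$.

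The heart of the argument is an index count, using $c_1(\Sigma)=0$ so that all Conley--Zehnder indices are defined. Adding a negative Reeb puncture at an orbit $c$ lowers the Fredholm index by $\mu_{CZ}(c)+n-3$, so
\[
 \ind(u_0)=\bigl(\mu(\gamma_+)-\mu(\gamma_-)\bigr)-\sum_{i=1}^{m}\bigl(\mu_{CZ}(c_i)+n-3\bigr)=1-\sum_{i=1}^{m}\bigl(\mu_{CZ}(c_i)+n-3\bigr).
\]
The hypothesis $\mu_{CZ}(c)>3-n$ makes each bracket a positive integer, hence $\ind(u_0)\le 1-m\le 0$, and the moduli space of such $u_0$ (whose only remaining reparametrisation is the $\RR$-translation) has negative virtual dimension; for generic $(H,J)$ it is empty. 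The single borderline case, $m=1$ with $\mu_{CZ}(c_1)=4-n$, is dispatched by applying the same inequality to the building attached below $c_1$: its lowest level is a nontrivial punctured sphere in $\RR\times\Sigma$ with positive end $c_1$ and at least one negative end, and the identical computation gives it non-positive index, contradicting the fact that a nontrivial symplectization level appearing in the limit of a $0$-dimensional moduli space must have non-negative dimension. Either way we obtain a contradiction, so no escape occurs and the moduli space is compact.

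The step I expect to be the main obstacle is transversality in the presence of multiply covered components on the symplectization levels: the index inequalities use crucially that $\mu_{CZ}(c)>3-n$ is required for \emph{all} closed Reeb orbits, iterates included, which is exactly what forces even multiply covered symplectization components to have index $\ge 1$ so that the level-by-level dimension bookkeeping goes through. I would handle this by importing the compactness/transversality package behind \cite{Ciel_Fra_Oan} (or, in the periodic-Reeb case, the Morse--Bott setup used later in the paper) rather than redoing it from scratch.
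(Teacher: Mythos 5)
Your main argument --- pass to an SFT building, show the top Floer level $u_0$ is an honest Floer cylinder with $m\ge 1$ extra negative Reeb punctures, and observe that $\ind(u_0) = 1 - \sum_i(\mu_{CZ}(c_i)+n-3)$ together with the free $\RR$-action forces the (quotiented) moduli space to have dimension $\le -1$ --- is exactly the paper's argument. But there is one step you assert that the paper actually has to justify, and which your index count alone does not rule out: the \emph{connectedness} of the top Floer level. You correctly exclude breaking at a Hamiltonian orbit using $\mu(\gamma_+)-\mu(\gamma_-)=1$, but the degeneration could a priori split the top level into two components, say a plane-type piece carrying the $\gamma_+$ asymptotic and another carrying $\gamma_-$, joined only through the symplectization levels below. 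By the dimension formula used later in Lemma~\ref{lem_dim_broken_curves} (with $|\Gamma^+|+|\Gamma^-|=1$ for each piece), the sum of virtual dimensions picks up an extra $+2n$, so the index inequality no longer gives a contradiction. The paper closes this via \cite[Section~5.2]{BO_seq}: the Floer cylinder approaches $\gamma_-$ from above, and a maximum principle excludes the breaking pictured in Figure~\ref{fig:cylinder_not_possible}. You would need a sentence to the same effect.

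Two smaller remarks. Your ``borderline case'' $m=1$, $\mu_{CZ}(c_1)=4-n$ is already covered by your main count: $\ind(u_0)=0$ together with the free $\RR$-action already gives an empty (quotiented) moduli space of dimension $-1$, so there is no need to inspect the lower levels --- and it is fortunate you don't need to, because your statement that the lowest symplectization level has ``at least one negative end'' need not hold (the lowest level of such a building in $\RR\times\Sigma$ can be a cap with only positive ends). Finally, your worry about transversality for multiply covered symplectization components is not actually engaged by this proof: the contradiction is obtained entirely on the top (Hamiltonian-perturbed) level, for which genericity of $(H,J)$ suffices, which is precisely what the paper relies on.
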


\begin{figure}[h] 
\begin{minipage}{0.6\textwidth}
 \centering
 \def\svgwidth{\textwidth}
 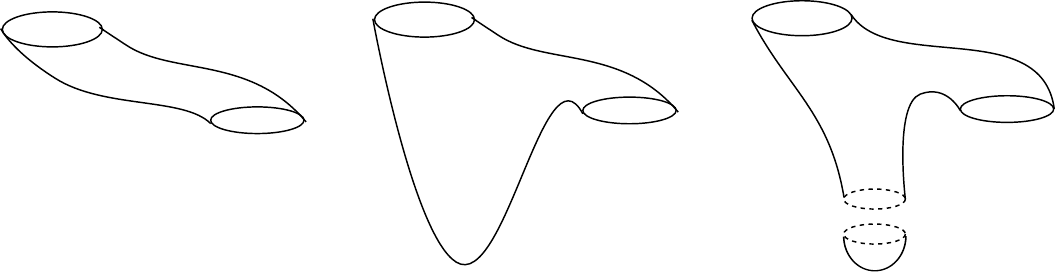
 \caption{Possible breaking of cylinders. Hamiltonian orbits are represented by continuous lines, Reeb orbits by dashed lines.}
 \label{fig:cylinder_breaking}
\end{minipage}
\hfill
\begin{minipage}{0.35\textwidth}
 \centering
 \def\svgwidth{0.5\textwidth}
 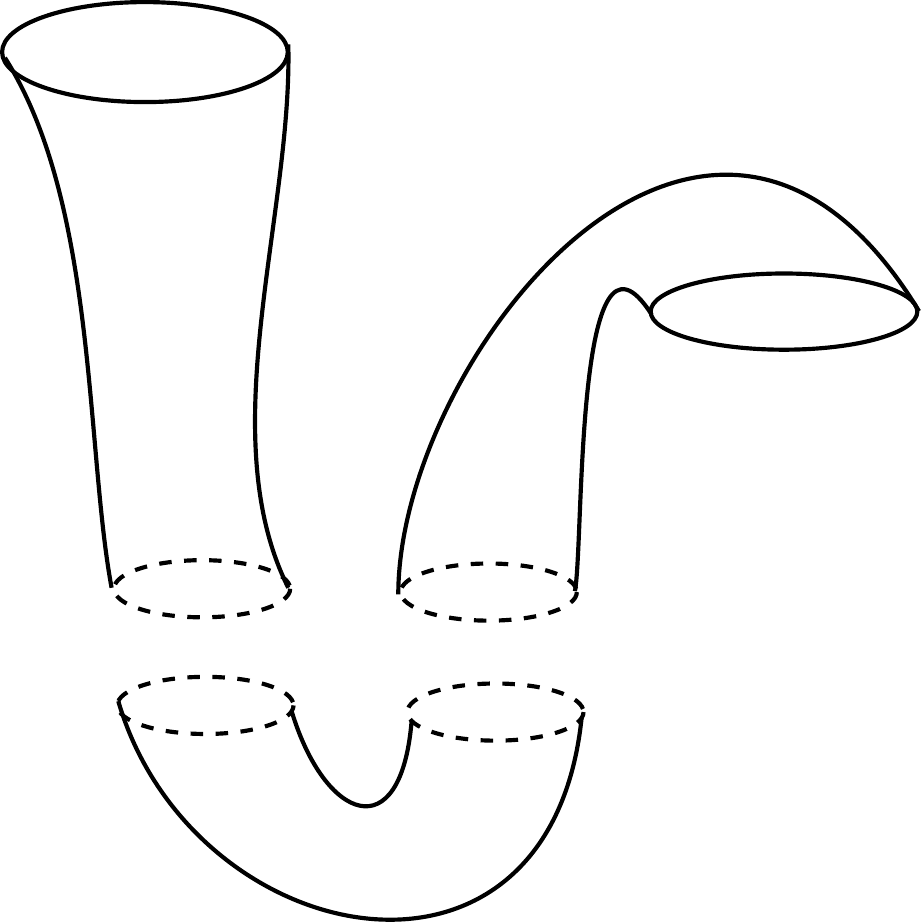
\caption{Such a breaking cannot occur, due to the maximum principle}
\label{fig:cylinder_not_possible}
\end{minipage}
\end{figure}

\begin{proof}
Assume that there exists a sequence of Floer cylinders $u_j \in \mcM^{\RR_+ \times \Sigma}(\gamma_+, \gamma_-; H,J)$ with $\lim_{j\to \infty} \inf(\pi_{\RR_+}(u_j)) = 0$. By the usual SFT-compactness, and since $H$ is constant on the negative end, they converge to a broken cylinder (see Figure~\ref{fig:cylinder_breaking}). Its top level component is a Floer cylinder with punctures, at which it is asymptotic to contractible Reeb orbits $c_1, \ldots, c_k$. As was shown in \cite[Section~5.2]{BO_seq}, the domain of the top component is connected. (The reason is that the $\RR_+$-component of the Floer cylinder approaches the orbit $\gamma_-$ from above, hence a breaking as in Figure~\ref{fig:cylinder_not_possible} is prevented by the maximum principle.) The moduli space of such punctured Floer cylinders has virtual dimension
\begin{equation} \label{eq_dim_top_comp}
 \mu(\gamma_+)-\mu(\gamma_-) - \sum_{j=1}^k (\mu(c_j)+n-3) -1 = - \sum_{j=1}^k (\mu(c_j)+n-3),
\end{equation}
where the $-1$ comes from dividing out the free $\RR$-action by shifts in the domain (see \cite[Section~5.2]{BO_seq}).
By the assumption on the indices of contractible Reeb orbits, this dimension is negative. Hence, by transversality (assuming $J_t$ was chosen sufficiently generic), this space is empty, giving a contradiction.
\end{proof}

In the same way, one can show that the moduli spaces for continuation maps are compact. In this case, there is no $\RR$-action divided out, so the virtual dimension is bigger by one compared to \eqref{eq_dim_top_comp}. However, the difference of Conley--Zehnder indices $\mu(\gamma_+) - \mu(\gamma_-)$ is zero, hence one gets the same contradiction.

\begin{cor} \label{cor_defined_in_sympl}
 Assume that $c_1(\Sigma)=0$ and either
 \begin{enumerate}[(i)]
  \item $\mu_{CZ}(c) > 4-n$ for all contractible Reeb orbits $c$, or
  \item $\Sigma$ admits a Liouville filling $W$ with $c_1(W)=0$ and $\mu_{CZ}(c) > 3-n$ for all Reeb orbits $c$ which are contractible in $W$.\footnote{For a Reeb orbit $c$ that is contractible in $W$ but not in $\Sigma$, we have to use the grading $\mu_{CZ}(c)$ coming from a filling disk in $W$.}
 \end{enumerate}
 Then, $\check{SH}$ can be defined by counting Floer cylinders on the symplectization $\RR_+ \times \Sigma$ instead of a filling.
 \end{cor}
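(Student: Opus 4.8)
The plan is to unwind the construction of $\check{SH}$ recalled in Section~\ref{sec_V_shaped} and check that under either hypothesis every Floer-theoretic ingredient can be set up on $\RR_+\times\Sigma$. Fix a $\bigvee$-shaped Hamiltonian $H=H_{\mu_1,\mu_2}$ as in Figure~\ref{fig:V_shaped_H} and an $\omega$-compatible $J$ that is SFT-like near $r\to 0$. For an action window $(a,b)$ with $\mu_1,\mu_2,\delta,\varepsilon$ chosen as in \cite{Ciel_Fra_Oan}, the generators contributing to $HF^{(a,b)}(H)$ are of types III, IV, V, all supported near $\{1\}\times\Sigma$ and hence in $\RR_+\times\Sigma$ regardless of any filling (type IV being the constant orbits on $\{1\}\times\Sigma$, treated in a Morse--Bott fashion); so the chain groups are already intrinsic to the symplectization. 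What must be checked is that the moduli spaces defining the differential, the relation $d^2=0$, the continuation maps realizing the limits \eqref{eq_lim_SH_1}--\eqref{eq_lim_SH_2} and the independence of $(H,J)$, the chain-map property of those maps, and the chain homotopies between them, are all compact, i.e.\ no Floer cylinder escapes to $r\to 0$. Each such moduli space has dimension zero or one (the latter possibly with an extra interval parameter), and each spurious breaking leaves a punctured top-level Floer cylinder asymptotic to contractible Reeb orbits $c_1,\ldots,c_k$ whose virtual dimension is given by the index formula underlying \eqref{eq_dim_top_comp}, with the appropriate dimension shift. The goal is to make this virtual dimension negative, so that the spurious component is empty for generic $J$.

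\emph{Case (ii).} Here $c_1(W)=0$ gives $c_1(\Sigma)=0$, and any Reeb orbit contractible in $\Sigma$ is contractible in $W$ with the same Conley--Zehnder index (since $c_1(W)=0$, a filling disk in $\Sigma$ and one in $W$ give the same index); hence $\mu_{CZ}(c)>3-n$ holds for all $\Sigma$-contractible Reeb orbits, which is exactly the hypothesis of Lemma~\ref{lem_cyl_cpt}. That lemma and the remark following it confine the index-$1$ Floer cylinders and the index-$0$ continuation cylinders to $\RR_+\times\Sigma$; running the same neck-stretching argument near $\partial W$ shows they never enter $W$, so the differential and continuation maps defined on the symplectization coincide with those of $\check{SH}(\widehat W)$. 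Since $d^2=0$ and the chain-homotopy relations hold in $\check{SH}(\widehat W)$ by ordinary Floer theory on the Liouville manifold $\widehat W$, they are inherited by the symplectization data, and $\check{SH}(\widehat W)$ is computed purely on $\RR_+\times\Sigma$.

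\emph{Case (i).} With no filling available, the relations $d^2=0$ and the independence statements must be proved directly on $\RR_+\times\Sigma$, and this is where $\mu_{CZ}(c)>4-n$ is used. The differential and the continuation maps are compact by Lemma~\ref{lem_cyl_cpt} and the remark after it, since $4-n>3-n$. For $d^2=0$ one examines the one-dimensional spaces $\mcM^{\RR_+\times\Sigma}(\gamma_+,\gamma_-;H,J)$ with $\mu(\gamma_+)-\mu(\gamma_-)=2$; a breaking off of contractible Reeb orbits $c_1,\ldots,c_k$ at $r\to 0$ leaves, by the index count behind \eqref{eq_dim_top_comp}, a punctured top component of virtual dimension
\[
 2-\sum_{j=1}^{k}\bigl(\mu(c_j)+n-3\bigr)-1 \;=\; 1-\sum_{j=1}^{k}\bigl(\mu(c_j)+n-3\bigr).
\]
Each summand is an integer $\geq 1$ by $\mu_{CZ}(c_j)>3-n$, and is $\geq 2$ when $k=1$ by $\mu_{CZ}(c_1)>4-n$; in either case this virtual dimension is negative, so for generic $J$ the spurious component is empty, only the usual two-level breakings occur, and $d^2=0$. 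The same count governs the chain-map property of the continuation maps and the homotopies witnessing independence of $(H,J)$ and of the cofinal families in \eqref{eq_lim_SH_1}--\eqref{eq_lim_SH_2}: in each case the relation is read off a one-dimensional moduli space whose spurious breakings leave a punctured top component of virtual dimension $1-\sum_{j}\bigl(\mu(c_j)+n-3\bigr)$, again negative precisely under $\mu_{CZ}(c)>4-n$. Hence all structure maps and all needed relations live on $\RR_+\times\Sigma$, the limits make sense there, and $\check{SH}$ is defined in the symplectization.

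I expect the main obstacle to be exactly this bookkeeping: tracking, for each of the moduli spaces behind $d^2=0$, the chain-map property, and the homotopies, the base dimension, the $\RR$-shift, and any parametrizing dimensions, so as to confirm that $\mu_{CZ}(c)>4-n$ is precisely what is needed without a filling whereas $\mu_{CZ}(c)>3-n$ suffices with one (since there the higher relations come for free from $\check{SH}(\widehat W)$). A secondary but routine point is the intrinsic Morse--Bott treatment of the region~IV orbits (and of degenerate Reeb orbits) on $\RR_+\times\Sigma$, following the cascade formalism of \cite{BO_seq}; one should also note that the only place $\pi_1(\Sigma)\neq 0$ intervenes is the grading convention of the footnote, all compactness estimates being insensitive to it.
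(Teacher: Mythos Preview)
Your proposal is correct and follows essentially the same approach as the paper: in case~(ii) you identify the symplectization differential with the filling differential via neck-stretching, so that $d^2=0$ and the higher relations are inherited from $\check{SH}(\widehat W)$; in case~(i) you directly bound the virtual dimension of the punctured top component in the one-dimensional moduli spaces using the stronger index assumption. Your bookkeeping for the chain-map property and homotopies is a bit more explicit than the paper's, which only spells out $d^2=0$ and leaves the rest implicit. One cosmetic remark: in case~(i), your split into $k=1$ versus $k\geq 2$ is unnecessary, since under $\mu_{CZ}(c_j)>4-n$ every summand $\mu(c_j)+n-3$ is already $\geq 2$, giving $1-\sum_j(\mu(c_j)+n-3)\leq 1-2k<0$ for all $k\geq 1$ directly.
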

 
\begin{proof}
 In addition to the compactness of the moduli spaces for the differential and the continuation maps, we have to show that $\dd \circ \dd = 0$. As usual, this is done by examining the moduli spaces $\mcM^{\RR_+ \times \Sigma}(\gamma_+, \gamma_-; H,J)$ for $\mu(\gamma_+) - \mu(\gamma_-) = 2$. We have to prove again that its elements do not escape to the negative end of the symplectization, so that the moduli space has the usual compactification by products of one-dimensional moduli spaces.
 
 If $\mu_{CZ}(c) > 4-n$ for all contractible Reeb orbits $c$, we can use the same proof as for Lemma~\ref{lem_cyl_cpt}. Indeed, the virtual dimension of the top component is
 \[
  \mu(\gamma_+)-\mu(\gamma_-) - \sum_{j=1}^k (\mu(c_j)+n-3) -1 = 1 - \sum_{j=1}^k (\mu(c_j)+n-3),
 \]
 which is again negative by the stronger index assumption.
 
 If, on the other hand, we only know $\mu_{CZ}(c) > 3-n$, this strategy does not work, since the virtual dimension might just be zero. Instead, if $(ii)$ holds, the strategy is to show that the differential defined by Lemma~\ref{lem_cyl_cpt} and the differential defined by the filling coincide. We have to show that, for any orbits $\gamma_+, \gamma_-$ with $\mu(\gamma_+) - \mu(\gamma_-) = 1$, the moduli spaces
 \begin{equation} \label{eq_moduli_space_bijection}
  \mcM^{\RR_+ \times \Sigma}(\gamma_+, \gamma_-; H,J) \quad \text{ and } \quad \mcM^{\widehat W}(\gamma_+, \gamma_-; H,J)
 \end{equation}
are in bijective correspondence. We use the ``neck-stretching'' operation as in \cite[Section~5.2]{BO_seq}. This basically means that we insert a piece of the symplectization with constant Hamiltonian near $\dd W \cong \{1\} \times \Sigma \subset \widehat W$ and make this piece larger and larger. Under this operation, the elements of $\mcM^{\widehat W}(\gamma_+, \gamma_-; H,J)$ which are not contained in $\RR_{\geq 1}\times \Sigma \subset \widehat W$ converge to broken cylinders as in the right of Figure~\ref{fig:cylinder_breaking}. However, by the same index calculation as in Lemma~\ref{lem_cyl_cpt}, such a breaking is not possible. Hence, this neck-stretching operation gives the correspondence \eqref{eq_moduli_space_bijection}.
\end{proof}

\begin{rmk} \label{rmk_grading}
 In case $(ii)$ of Corollary~\ref{cor_defined_in_sympl}, one can wonder whether $\check{SH}$ is independent of the choice of filling $W$. Indeed, the only place where the choice of $W$ still plays a role is the grading. For a Reeb orbit $c$ which is not contractible in $\Sigma$, the grading generally depends on the choice of a ``reference loop'' in the free homotopy class of $c$. If $c$ is contractible in $W$, however, $W$ gives a canonical choice of grading. This grading might differ for different Liouville fillings with $c_1(W)=0$.
 
 Apart from this grading ambiguity, $\check{SH}$ is independent of $W$. In particular, this is the case if $\pi_1(\Sigma)=0$, or more generally if the induced map $\pi_1(\Sigma) \to \pi_1(W)$ is injective.
 
 Once product structures are taken into account, the grading issue becomes more complicated. Then, the reference loops for different free homotopy classes can no longer be chosen independently from each other, and it is not clear what choices one has in general for the grading of non-contractible orbits. One possible way to go is to split symplectic homology into different homology classes in $H_1(W)$, as opposed to free homotopy classes. If $H_1(W)$ is free, one can assign gradings consistently as in \cite{EGH}. However, if $H_1(W)$ has torsion, one runs into the same problems as in \cite[Section~2.9.1]{EGH}.
 
 To avoid these issues, we assume from now on that $\pi_1(\Sigma)=0$. The only exception in this text will be the example of $A_k$-surface singularities in Section~\ref{sec_A_k_surface}, but these have an explicit Liouville filling with $c_1(W)=0$ and $\pi_1(W)=0$ which can be used to define the grading. 
 
 Alternatively, one can consider the subring $\check{SH}^{\mathrm{contractible}} \subset \check{SH}$ generated by contractible Reeb orbits, for which the grading is always well-defined.
\end{rmk}

\begin{rmk} \label{rmk_SH_plus_indep}
 The statements of Corollary~\ref{cor_defined_in_sympl} and Remark~\ref{rmk_grading} hold equally true for $SH^+$ instead of $\check{SH}$. In particular, if $\Sigma$ is simply-connected and fulfills $c_1(\Sigma)=0$, $\mu_{CZ}(c)>3-n$ for all Reeb orbits $c$ and admits a Liouville filling $W$ with $c_1(W)=0$, then $SH^+(W)$ is independent of the choice of $W$.
\end{rmk}

\begin{defn}
 We call a contact manifold $(\Sigma, \xi)$ \emph{index-positive} if there exists a contact form $\alpha$ with $\xi = \ker(\alpha)$ such that the assumption of Corollary~\ref{cor_defined_in_sympl} is satisfied.
\end{defn}

In the following, we will always assume that $\Sigma$ is index-positive. 
In view of Corollary~\ref{cor_defined_in_sympl}, we will also write $\check{SH}(\Sigma)$ instead of $\check{SH}(W)$.

We would like to have statements analogous to Lemma~\ref{lem_cyl_cpt} and Corollary~\ref{cor_defined_in_sympl} also for moduli spaces of pairs-of-pants. However, there is an additional complication: While the top component of a broken Floer cylinder was always connected, a pair-of-pants can also break as in Figure~\ref{fig:pants_breaking}. We must exclude this by another index condition.

\begin{figure}[ht]
 \centering
 \vspace{0.3cm}
 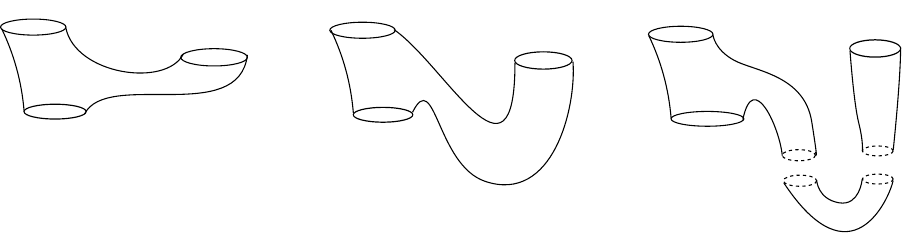
 \caption{Possible breaking of pairs-of-pants. Hamiltonian orbits are represented by continuous lines, Reeb orbits by dashed lines.}
 \label{fig:pants_breaking}
\end{figure}

As as preparation, the next lemma gives the general dimension formula for the moduli spaces of broken Floer curves that appear in the limit process. As always in this section, we assume that $c_1(\Sigma)=0$.

Let $\Gamma^+ = (\gamma_1^+, \ldots, \gamma_{k_+}^+)$ and $\Gamma^- = (\gamma_1^-, \ldots, \gamma_{k_-}^-)$ be collections of Hamiltonian orbits in $\RR_+ \times \Sigma$ and $C = (c_1, \ldots, c_\ell)$ be a collection of contractible Reeb orbits of $\Sigma$. Further, let $H,J, \beta$ be Floer data as in Section~\ref{sec_product_def} (with the straightforward generalization to any number of positive and negative punctures). Denote by $\mcM(\Gamma^+, \Gamma^-, C; \beta, H, J)$ the moduli space of maps 
\[
  u\colon \CP^1\setminus\{z_1^+, \ldots, z_{k_+}^+, z_1^-, \ldots, z_{k_-}^-, \tilde z_1, \ldots, \tilde z_\ell\} \longrightarrow \RR_+\times \Sigma
 \]
which fulfill Floer's equation \eqref{eq_Floer_beta}, converge to $\gamma_i^\pm$ as $z\to z_i^\pm$ in the sense of Floer theory and converge to $\{0\}\times c_j$ at $\tilde z_j$ in the sense of SFT. The conformal structure on $\CP^1 \setminus \{z_1^+, \ldots, z_{k_+}^+, z_1^-, \ldots, z_{k_-}^-\}$ is understood to be fixed, while the points $\tilde z_1, \ldots, \tilde z_\ell$ can vary freely. 

\begin{lem} \label{lem_dim_broken_curves}
 The virtual dimension of this moduli space is
 \[
  \dim \mcM(\Gamma^+, \Gamma^-, C; \beta, H, J) = \sum_{i=1}^{k_+} \mu(\gamma_i^+) - \sum_{i=1}^{k_-} \mu(\gamma_i^-) + n(2-|\Gamma^+|-|\Gamma^-|) - \sum_{j=1}^\ell (\mu(c_j) + n-3).
 \]
\end{lem}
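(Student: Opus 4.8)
The plan is to compute the virtual dimension as a sum of the virtual dimensions of the individual pieces, organized according to the structure of a generic broken configuration, and then check that the bookkeeping collapses to the stated formula. First I would recall that for a Floer-type problem on a noncompact exact symplectic manifold with $c_1(\Sigma)=0$, the index of the linearized operator is governed by a Riemann--Roch count: on a genus-zero domain $\Sigma_{g=0}$ with a collection of punctures at which one imposes Conley--Zehnder-type asymptotics, the index of the $\bar\partial$-type operator (before dividing out automorphisms) is
\[
 \operatorname{ind} = n\chi(\dot\Sigma) + 2c_1(u^*T) + \sum_{\text{punctures}} \pm\mu_{\text{asymp}},
\]
where punctures contribute with signs according to whether they are positive or negative and $\dot\Sigma$ is the punctured surface. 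Since $\Sigma$ is a symplectization and all relevant classes vanish on $\pi_2$, the relative first Chern number $c_1(u^*T)$ contributes nothing, and what remains is purely topological plus the asymptotic terms. I would treat the Hamiltonian punctures (the $\gamma_i^\pm$) as carrying Conley--Zehnder index $\mu(\gamma_i^\pm)$ with the usual sign convention, and the SFT punctures (the Reeb orbits $c_j$) as carrying the asymptotic contribution that appears in the SFT index formula, which for a contractible Reeb orbit in a contact manifold of dimension $2n-1$ with the conventions used throughout this paper is $-(\mu(c_j)+n-3)$ (this is exactly the contribution already used in \eqref{eq_dim_top_comp} and in the proof of Lemma~\ref{lem_cyl_cpt}).

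The key steps, in order, are: (1) write down the Fredholm index of the linearized operator on the fixed-conformal-structure domain $\CP^1 \setminus \{z_i^\pm\}$ with the $\tilde z_j$ also punctured, using $\chi\bigl(\CP^1 \setminus \{k_+ + k_- + \ell \text{ points}\}\bigr) = 2 - k_+ - k_- - \ell$; this gives the term $n(2 - |\Gamma^+| - |\Gamma^-| - \ell)$ from the Euler characteristic, plus $\sum \mu(\gamma_i^+) - \sum \mu(\gamma_i^-)$ from the Hamiltonian asymptotics, plus $-\sum_j \mu(c_j)$ from the Reeb asymptotics, with the precise normalization fixed by matching against the cylinder case. (2) Add back the dimension of the space over which the marked points $\tilde z_j$ are allowed to move: since the conformal structure on the Hamiltonian-punctured sphere is fixed and each $\tilde z_j$ varies freely in a real two-dimensional surface, this contributes $+2\ell$. (3) Collect: the $\ell$-dependent terms are $-n\ell$ (from the Euler characteristic) $+2\ell$ (from moving the $\tilde z_j$) $-\sum_j \mu(c_j)$ (from asymptotics), and one checks $-n\ell + 2\ell - \sum_j\mu(c_j) = -\sum_{j=1}^\ell(\mu(c_j) + n - 3)$, which is precisely the summand in the claimed formula. (4) Verify consistency with the two special cases already established: taking $k_+ = 1$, $k_- = 1$, $\ell = 0$ and no $\beta$-twisting recovers $\mu(\gamma_+) - \mu(\gamma_-) - n$ if one is on a filling, or the appropriate symplectization variant; and taking $\Gamma^- = \emptyset$, one positive Hamiltonian puncture, $\ell$ Reeb punctures, and then dividing out the $\RR$-translation reproduces \eqref{eq_dim_top_comp}.

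Concretely I would organize step (1) by invoking the standard index additivity/gluing formula for punctured pseudoholomorphic curves (as in the references already cited, e.g.\ \cite{BO_seq} and the SFT index formula in \cite{EGH}): rather than re-deriving Riemann--Roch from scratch, observe that the top-level component of a broken configuration in the symplectization glued to $\ell$ trivial SFT cylinders over the $c_j$ must have total index equal to the index of the unbroken curve counted by the original moduli problem with $\beta$-data; the trivial cylinders each have index $\mu(c_j)+n-3$ before quotienting by $\RR$, so the index of the remaining component is obtained by subtracting these, which is exactly how the term $\sum_j(\mu(c_j)+n-3)$ enters. The $n(2 - |\Gamma^+| - |\Gamma^-|)$ term is then just the usual $\bar\partial$-index for a genus-zero Floer curve with $|\Gamma^+| + |\Gamma^-|$ Hamiltonian punctures (no $\ell$ appears here because the Reeb punctures and their moduli have been absorbed into the $(\mu(c_j)+n-3)$ terms).

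The main obstacle, and the step that needs genuine care rather than pure bookkeeping, is getting all sign and normalization conventions mutually consistent: the Conley--Zehnder index convention for Hamiltonian orbits, the direction of the asymptotic operators at positive versus negative punctures, the SFT grading shift for Reeb orbits (the ``$+n-3$'' versus other common normalizations such as ``$+n-2$'' or ``$+2n-3$''), and whether or not the free $\RR$-action in the symplectization has already been quotiented out in the statement of the lemma (here it has \emph{not} been, since the $\tilde z_j$ vary but the Hamiltonian-punctured conformal structure is fixed and the Hamiltonian term breaks the $\RR$-symmetry, so there is no $\RR$-quotient to take — this is why there is no ``$-1$'' in the formula, in contrast to \eqref{eq_dim_top_comp}). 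Once the conventions are pinned down by forcing agreement with \eqref{eq_dim_top_comp} and with the dimension formula for the moduli space of pairs-of-pants from Section~\ref{sec_product_def}, the rest is the arithmetic identity $2\ell - n\ell = -\ell(n-2)$ combined with $-\sum_j\mu(c_j) = -\sum_j(\mu(c_j)+n-3) + \ell(n-3)$, and indeed $\ell(n-3) + \ell(2-n) = -\ell$... — at which point I would double-check the constant once more against the established special cases before declaring victory.
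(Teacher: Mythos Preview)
The paper's proof is far shorter than your outline: it takes the $C=\emptyset$ case as standard (citing \cite{Schwarz}), then reduces the general case to it by gluing a $J$-holomorphic disc asymptotic to $c_j$ at each Reeb puncture. Since such a disc has moduli-space dimension $\mu(c_j)+n-3$ (citing \cite{BO_seq}) and the dimension formula is additive under gluing, the lemma follows by subtraction. Your ``concrete organization'' paragraph is close in spirit to this, but contains an actual error: you glue ``trivial SFT cylinders over the $c_j$'' and assert they have index $\mu(c_j)+n-3$. Trivial cylinders have index $0$ --- gluing them changes nothing. The objects with index $\mu(c_j)+n-3$ are $J$-holomorphic \emph{discs} (planes), which genuinely cap off the Reeb ends. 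With that single correction your gluing argument becomes the paper's proof.

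Your primary route via direct Riemann--Roch on the fully punctured sphere is valid in principle and more self-contained, but as you yourself notice at the end, the arithmetic in step~(3) does not close: you obtain $-n\ell+2\ell-\sum_j\mu(c_j)$ where you need $-n\ell+3\ell-\sum_j\mu(c_j)$, off by exactly $\ell$. The missing $+\ell$ is not a typo but reflects the fact that Hamiltonian-type and Reeb-type punctures carry genuinely different asymptotic operators: the index contribution at a negative Reeb puncture is not simply $-\mu(c_j)$ in the normalization where Hamiltonian punctures contribute $\pm\mu(\gamma)$ and the Euler characteristic enters with coefficient $n$. This discrepancy can be resolved by carefully tracking the two index conventions, but the paper's gluing-with-discs argument sidesteps the bookkeeping entirely --- which is precisely what that approach buys.
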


\begin{proof}
 For $C=\emptyset$, the formula is fairly standard (see e.g.\ \cite[Theorem~3.3.11]{Schwarz}). The general case can be deduced by gluing $J$-holomorphic discs to the orbits $c_j$. By \cite[Section~3]{BO_seq}, the dimension of the moduli space of $J$-holomorphic discs asymptotic to a Reeb orbit $c_j$ is 
 \[
  \mu(c_j)+n-3.
 \]
 As the dimension formula is additive under gluing, the result follows.
\end{proof}

\begin{rmk}
 For $|\Gamma^+|=|\Gamma^-|=1$, this is the moduli space of punctured holomorphic cylinders. For this case, the dimension was already computed in \cite{BO_seq}, and we applied the result in the proof of Lemma~\ref{lem_cyl_cpt} above. In the following lemma, we need the cases  $|\Gamma^+|=|\Gamma^-|=1$ and $|\Gamma^+|=1$, $|\Gamma^-|=0$, as these cases appear in Figure~\ref{fig:pants_breaking}.
\end{rmk}

\begin{lem} \label{lem_pop_cpt}
 Fix Hamiltonian orbits $\gamma_1, \gamma_2, \gamma_-$ with  
 \begin{equation} \label{eq_mod_prod_dim_zero}
  \mu(\gamma_1) + \mu(\gamma_2) - \mu(\gamma_-) -n = 0.
 \end{equation}
 Assume that $\Sigma$, in addition to being index-positive, satisfies
 \begin{equation} \label{eq_assumption_prod}
  \mu(c) > \max\{3-|\mu(\gamma_1)|, 3-|\mu(\gamma_2)|\}
 \end{equation}
 for all Reeb orbits $c$. Then, the moduli space $\mcM^{\RR_+ \times \Sigma}(\gamma_1, \gamma_2, \gamma_-; \beta, H,J)$ is compact.
\end{lem}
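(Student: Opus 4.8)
The plan is to follow the proof of Lemma~\ref{lem_cyl_cpt}, replacing the step ``the top level is a connected cylinder'' by a case analysis of the ways a pair-of-pants can degenerate.

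Suppose, for contradiction, that there is a sequence $u_j\in\mcM^{\RR_+\times\Sigma}(\gamma_1,\gamma_2,\gamma_-;\beta,H,J)$ with $\inf(\pi_{\RR_+}(u_j))\to 0$. Since $H$ is constant and $J$ is SFT-like near the negative end, SFT-compactness (in the neck-stretching form of \cite[Section~5.2]{BO_seq}) produces a limiting holomorphic building: a top level carrying the Floer data $(\beta,H,J)$, which is a (possibly nodal, possibly disconnected) punctured sphere with the three original punctures at $\gamma_1,\gamma_2,\gamma_-$ together with finitely many additional negative punctures asymptotic to contractible Reeb orbits, and lower levels consisting of $J$-holomorphic curves in the symplectization $\RR\times\Sigma$ asymptotic to those Reeb orbits. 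Exactly as in Lemma~\ref{lem_cyl_cpt}, the $\RR_+$-coordinate of $u_j$ approaches the output $\gamma_-$ from above, so the maximum principle (Figure~\ref{fig:cylinder_not_possible}) forbids any lower-level curve asymptotic to $\gamma_-$; hence the component carrying $\gamma_-$ lies on the top level and is a genuine Floer curve.

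The point where the argument genuinely differs from the cylinder case is that, once extra punctures have appeared, the domain can pinch so that the two inputs $\gamma_1,\gamma_2$ lie on different components of the top level (Figure~\ref{fig:pants_breaking}). I would split into two cases. In case (a) the top level is a single connected Floer pair-of-pants asymptotic to $\gamma_1,\gamma_2,\gamma_-$ and to Reeb orbits $c_1,\dots,c_\ell$ with $\ell\geq 1$; then Lemma~\ref{lem_dim_broken_curves} with $|\Gamma^+|=2$, $|\Gamma^-|=1$ gives for its virtual dimension
\[
 \mu(\gamma_1)+\mu(\gamma_2)-\mu(\gamma_-)-n-\sum_{j=1}^\ell\bigl(\mu(c_j)+n-3\bigr)=-\sum_{j=1}^\ell\bigl(\mu(c_j)+n-3\bigr)
\]
by \eqref{eq_mod_prod_dim_zero}, which is negative since $\Sigma$ is index-positive; so this stratum is empty for generic $J$, just as in Lemma~\ref{lem_cyl_cpt}. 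In case (b) the top level is disconnected, the extreme configuration consisting of a component carrying only $\gamma_1$ (with $\geq 1$ negative Reeb punctures), a component carrying only $\gamma_2$ (likewise), and a component carrying $\gamma_-$; these are precisely the component types $|\Gamma^+|=1,\ |\Gamma^-|=0$ and $|\Gamma^+|=|\Gamma^-|=1$ singled out after Lemma~\ref{lem_dim_broken_curves}. Applying that lemma to each piece and using that virtual dimension is additive when the pieces are glued (to the lower-level SFT curves, or to one another) along the Reeb orbits, each extra Reeb orbit $c$ sitting on the $\gamma_i$-component contributes $-(\mu(c)+n-3)$; the hypothesis \eqref{eq_assumption_prod}, i.e.\ $\mu(c)+|\mu(\gamma_1)|>3$ and $\mu(c)+|\mu(\gamma_2)|>3$, is exactly what is needed so that, after summing over all pieces and using \eqref{eq_mod_prod_dim_zero} on the glued total, the virtual dimension of the stratum drops below zero. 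Hence case (b) is also empty for generic $J$, contradicting the assumed escape of $u_j$, and the moduli space is compact.

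The main obstacle I expect is making case (b) precise: enumerating which components of the degenerate top-level domain can carry $\gamma_1,\gamma_2,\gamma_-$ and how they are linked (by nodes or through the lower levels), keeping track of the signs of the extra Reeb punctures, and checking that the naive additive count is genuinely the virtual dimension of the boundary stratum, so that transversality really forces emptiness. This is the combinatorial heart of the matter, and it is where the structure theory of \cite[Section~5.2]{BO_seq} is needed rather than just index arithmetic; once the bookkeeping is in place, the role of \eqref{eq_assumption_prod} is purely arithmetic, in direct analogy with the role of index-positivity in Lemma~\ref{lem_cyl_cpt}.
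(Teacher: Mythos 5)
Your general framework is right (SFT compactness, Lemma~\ref{lem_dim_broken_curves}, index arithmetic), and your case (a) — a connected top-level curve with extra Reeb punctures — is handled correctly, exactly as in Lemma~\ref{lem_cyl_cpt}. The problem is case (b), and it is not a bookkeeping detail that can be deferred: your argument there is built on ``virtual dimension is additive when the pieces are glued,'' and the paper's own remark immediately after this lemma explicitly says that this \emph{fails} for the configuration in Figure~\ref{fig:pants_breaking}. The bottom-level SFT cylinder joining $c_1$ and $c_2$ contributes $\mu(c_1)+\mu(c_2)$, and if you add the three pieces' dimensions naively you get $\mu(\gamma_1)+\mu(\gamma_2)-\mu(\gamma_-)-n+6 = 6$, not something negative — the mismatch comes from the fixed conformal structure on the pair-of-pants domain, which is not recovered under gluing. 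So the assertion that hypothesis \eqref{eq_assumption_prod} ``is exactly what is needed so that \dots the virtual dimension of the stratum drops below zero'' does not hold; the naive total never goes negative.

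The paper sidesteps additivity entirely. It argues component-by-component: for the broken configuration to appear, \emph{each} top-level Floer component must individually be nonempty, hence (generically) have nonnegative virtual dimension. Applying Lemma~\ref{lem_dim_broken_curves} separately to the piece carrying $(\gamma_1,\gamma_-,c_1)$ and to the piece carrying $(\gamma_2,c_2)$, and simplifying with \eqref{eq_mod_prod_dim_zero}, gives $\mu(c_1)\le 3-\mu(\gamma_2)$ and $\mu(c_2)\le 3+\mu(\gamma_2)$. Feeding these into \eqref{eq_assumption_prod} forces simultaneously $\mu(\gamma_2)<0$ and $\mu(\gamma_2)>0$, a contradiction. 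This is the step your proof is missing: the role of \eqref{eq_assumption_prod} is not to make a glued total negative, but to make the two \emph{separate} nonnegativity constraints mutually exclusive. You flag the additivity issue yourself as ``the combinatorial heart of the matter,'' but it isn't a bookkeeping lemma to be filled in — it is false, and the proof must be restructured along the lines above.
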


\begin{proof}
 We need to rule out the breaking as in Figure~\ref{fig:pants_breaking} (and similarly with $\gamma_1$ and $\gamma_2$ exchanged). Then, the rest of the proof works as in Lemma~\ref{lem_cyl_cpt}.

 For the top level on the right of Figure~\ref{fig:pants_breaking} to have positive dimension, by Lemma~\ref{lem_dim_broken_curves}, we would need
 \[
  \mu(\gamma_1) - \mu(\gamma_-) - \mu(c_1) - n +3 \geq 0 
 \]
 and 
 \[
  \mu(\gamma_2) - \mu(c_2) +3 \geq 0.
 \]
 Using \eqref{eq_mod_prod_dim_zero}, these conditions simplify to 
 \[
  \mu(c_1) \leq 3 -\mu(\gamma_2) \quad \text{ and } \quad \mu(c_2) \leq 3 +\mu(\gamma_2).
 \]
 By the assumption \eqref{eq_assumption_prod}, these two equations lead to
 \[
   3-|\mu(\gamma_2)| < 3 -\mu(\gamma_2) \quad \text{ and } \quad 3-|\mu(\gamma_2)| < 3 + \mu(\gamma_2).
 \]
 The first equation implies $\mu(\gamma_2)<0$ while the second equation implies $\mu(\gamma_2)>0$, giving a contradiction.
\end{proof}

\begin{rmk}
 The cylinder in the bottom level on the right of Figure~\ref{fig:pants_breaking} is a holomorphic curve of the kind studied in SFT. As such, it lives in a moduli space of virtual dimension $\mu(c_1)+\mu(c_2)$ (which might not be cut out transversally). Thus, it seems that the virtual dimensions appearing in Figure~\ref{fig:pants_breaking} are not additive under gluing. The reason for the mismatch is that upon gluing, one does in general not recover the conformal structure that was fixed in the left part of Figure~\ref{fig:pants_breaking}. 
\end{rmk}

To have the chain level product of any orbits well-defined in the symplectization, Lemma~\ref{lem_pop_cpt} implies that the condition
\begin{equation} \label{eq_assumption_prod_all}
 \mu(c) > 3 \quad \text{ for all closed Reeb orbits $c$}
\end{equation}
is sufficient. In order for the product to descend to homology, one also need compactness of the one-dimensional moduli spaces. However, a quick calculation (as in the proof of Lemma~\ref{lem_pop_cpt}) shows that \eqref{eq_assumption_prod_all} is sufficient for this as well.

\begin{defn}
 We call a contact manifold $(\Sigma, \xi)$ with $c_1(\Sigma)=0$ and $\pi_1(\Sigma)=0$ \emph{product-index-positive} if there exist a contact form $\alpha$ with $\xi = \ker(\alpha)$ such that \eqref{eq_assumption_prod_all} holds.
\end{defn}

As $\dim(\Sigma)=2n-1$, we have $n\geq 1$, so product-index-positivity implies index-positivity.

\begin{cor} \label{cor_prod_in_sympl}
 For a product-index-positive contact manifold $\Sigma$, $\check{SH}$ and its product structure can be defined by counting Floer cylinders and pairs-of-pants in the symplectization $\RR_+ \times \Sigma$.
\end{cor}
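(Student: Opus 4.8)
\emph{Proof of Corollary~\ref{cor_prod_in_sympl} (proposal).}
The plan is to assemble the statement from three ingredients already in place: the observation (just above) that product-index-positivity implies index-positivity, Corollary~\ref{cor_defined_in_sympl}, which puts the additive theory $\check{SH}$ on the symplectization, and Lemma~\ref{lem_pop_cpt} together with the remark following it, which control the pair-of-pants moduli spaces. First, since $n\geq 1$, condition \eqref{eq_assumption_prod_all} implies in particular $\mu_{CZ}(c)>3>4-n$ for all contractible Reeb orbits, so Corollary~\ref{cor_defined_in_sympl}(i) applies and $\check{SH}(\Sigma)$ is defined, via the limits \eqref{eq_lim_SH_1}--\eqref{eq_lim_SH_2}, by counting Floer cylinders in $\RR_+\times\Sigma$ with an almost complex structure that is SFT-like near the negative end.

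Next I would set up the product on the symplectization. Fix an action window $(a,b)$ and $\bigvee$-shaped Hamiltonians $H_0,H_1,H_2$ with slopes $\mu_1,\mu_2\notin\Spec(\Sigma,\alpha)$, and choose $\beta$ and $H_\mcP$ so that \eqref{eq_1form_compactness} holds; as in \cite[Exercise~2.3.4]{Abouzaid} this is possible once the output slopes dominate the sums of the input slopes, and on the window $(a,b)$ the orbits of interest all lie in the convex regions III--V of Figure~\ref{fig:V_shaped_H}, so the construction of \cite{Ritter_tqft,Abouzaid,Ciel_Oan} carries over verbatim. Taking $J_\mcP$ SFT-like near the negative end, one forms $\mcM^{\RR_+\times\Sigma}(\gamma_1,\gamma_2,\gamma_-;\beta,H_\mcP,J_\mcP)$. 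For \eqref{eq_mod_prod_dim_zero} this space is zero-dimensional, and I claim it is compact: a sequence escaping to the negative end converges by SFT-compactness to a broken configuration whose top component is a punctured pair-of-pants asymptotic to contractible Reeb orbits. A breaking in which the domain of the top component stays connected is excluded exactly as in Lemma~\ref{lem_cyl_cpt} using the dimension formula of Lemma~\ref{lem_dim_broken_curves} and $\mu(c)>3$; the breaking in which the domain disconnects, as in Figure~\ref{fig:pants_breaking}, is excluded by Lemma~\ref{lem_pop_cpt}, whose hypothesis \eqref{eq_assumption_prod} follows from \eqref{eq_assumption_prod_all} since $\mu(c)>3\geq 3-|\mu(\gamma_i)|$. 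Hence the count $\#_2\,\mcM^{\RR_+\times\Sigma}(\gamma_1,\gamma_2,\gamma_-)$ is well defined. The same argument applied to the one-dimensional moduli spaces (the virtual dimension of the broken strata goes up by one compared with \eqref{eq_dim_top_comp}, but so does the dimension of the unbroken space, and the index count is unchanged, cf.\ the remark after Lemma~\ref{lem_pop_cpt}) shows the product is a chain map, so it descends to homology. Running the dimension formula of Lemma~\ref{lem_dim_broken_curves} against $\mu(c)>3$ in the same way also rules out escape of the moduli spaces with more punctures that establish associativity, graded commutativity, and the unit property, so these axioms hold on $\RR_+\times\Sigma$ as well.

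Finally I would check compatibility with the limits defining $\check{SH}$. The product commutes with continuation maps as the slopes $\mu_1,\mu_2\to\infty$ and, by the action estimate recalled in Remark~\ref{rmk_action_filtr}, with the action-window truncations, so it passes to the inverse limit over $a$ and the direct limit over $b$ in \eqref{eq_lim_SH_2}; this is the standard argument of \cite[Section~2.3.6]{Abouzaid} and \cite{Ciel_Oan}, the only new point being that every cobordism moduli space occurring in it is compact in the symplectization, which again follows from the dimension count above. Comparing with the filling (when one exists) via the neck-stretching argument of \cite[Section~5.2]{BO_seq}, as in the proof of Corollary~\ref{cor_defined_in_sympl}, identifies this product with the usual pair-of-pants product on $\check{SH}$. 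The main obstacle is the bookkeeping in this last stretch: one must verify that \emph{all} moduli spaces used --- not just the ones computing $\gamma_1\cdot\gamma_2$, but those behind the ring axioms and the continuation/truncation compatibilities --- are compact on $\RR_+\times\Sigma$. No new idea is needed beyond Lemmas~\ref{lem_cyl_cpt}, \ref{lem_dim_broken_curves} and \ref{lem_pop_cpt} together with the connectedness-via-maximum-principle observation of \cite[Section~5.2]{BO_seq}, but the case analysis of which breakings can occur has to be carried out once for each configuration type. \qed
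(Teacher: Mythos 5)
Your proposal is correct and takes essentially the same approach as the paper, which treats Corollary~\ref{cor_prod_in_sympl} as an immediate consequence of Lemma~\ref{lem_pop_cpt}, the remark after it on one-dimensional moduli spaces, the observation that product-index-positivity implies index-positivity, and Corollary~\ref{cor_defined_in_sympl}; no separate proof is given in the text. You merely write out explicitly the bookkeeping the paper leaves implicit (including some extra remarks on ring axioms and the comparison with a filling, which go slightly beyond what the corollary asserts but are harmless).
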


\subsection{$S^1$-actions by Hamiltonian loops on $\RR_+ \times \Sigma$} \label{sec_action_symplectization}

Let $\Sigma$ be any contact manifold for which the Reeb flow is periodic. 
After normalizing the period to one, the Reeb flow defines an $S^1$-action, which we denote by $e^{2\pi i t}.z$, with $t\in S^1=\RR/\ZZ$. Using this, we can define a loop of Hamiltonian diffeomorphisms
\begin{equation} \label{eq_Ham_loop}
 g_t\colon \RR_+ \times \Sigma \to \RR_+ \times \Sigma, \qquad g_t(r,z) = (r, e^{2\pi i \varphi(t)}. z).
\end{equation}
on the symplectization. Here, $\varphi\colon [0,1] \to \RR$ is any map with $\varphi(0)=0$ and $\varphi(1)\in\ZZ$ (e.g.\ the identity map, though we will also need others below). The corresponding Hamiltonian function $K^g_t$ on $\RR_+ \times \Sigma$ is (up to a possibly time-dependent constant) 
\[
 K^g_t(t,r,z) = \varphi'(t) \cdot r.
\]

The following lemma gives a characterization of the Hamiltonians that can be written as $g_*H$ for $H$ constant and $g$ as in \eqref{eq_Ham_loop}.

\begin{lem} \label{lem_char_H}
 A linear Hamiltonian $G$ on $(\RR_{>0}\times \Sigma, d(r\alpha))$ can be written as $g_*H$ for $H \equiv \text{constant}$ and $g$ as in \eqref{eq_Ham_loop} if and only if its slope $\sigma(t)$ depends only on $t$ and fulfills $\int_0^1 \sigma(t) \, dt \in \ZZ$.
\end{lem}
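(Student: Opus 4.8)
The plan is to prove both implications by a direct computation of the pushforward $g_*H$ for a loop $g$ of the form \eqref{eq_Ham_loop} and a constant Hamiltonian $H$, using the formula $(g_*H_t)(x) = H_t(g_t^{-1}(x)) + K^g_t(g_t^{-1}(x))$ from Section~\ref{sec_S1_generalities} together with the explicit expression $K^g_t(r,z) = \varphi'(t)\cdot r$.

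For the ``only if'' direction I would argue as follows. Suppose $G = g_*H$ with each $H_t$ a (possibly $t$-dependent) constant and $g$ as in \eqref{eq_Ham_loop} for some $\varphi$ with $\varphi(0) = 0$ and $\varphi(1)\in\ZZ$. Since $g_t^{-1}(r,z) = (r,\, e^{-2\pi i\varphi(t)}.z)$ and $K^g_t$ is independent of the $\Sigma$-coordinate, one obtains $G_t(r,z) = H_t + \varphi'(t)\,r$. Thus $G$ is linear with slope $\sigma(t) = \varphi'(t)$, which manifestly depends only on $t$, and
\[
 \int_0^1 \sigma(t)\,dt = \int_0^1 \varphi'(t)\,dt = \varphi(1) - \varphi(0) = \varphi(1) \in \ZZ.
\]

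For the ``if'' direction I would run this computation in reverse. Given $G$ linear with slope $\sigma(t)$ independent of the $\Sigma$-coordinate and $\int_0^1\sigma(t)\,dt = m\in\ZZ$, set $\varphi(t) \defeq \int_0^t \sigma(s)\,ds$, so that $\varphi(0)=0$ and $\varphi(1) = m\in\ZZ$; hence $\varphi$ is admissible in \eqref{eq_Ham_loop}, and for the corresponding loop $g$ we have $K^g_t(r,z) = \sigma(t)\,r$. Writing $G_t(r,z) = \sigma(t)\,r + c(t)$, where $c(t)$ absorbs the additive constant (the usual ambiguity for a Hamiltonian linear at infinity), one defines $H_t \defeq c(t)$, a $t$-dependent constant whose Hamiltonian vector field vanishes. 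The first computation then gives $g_*H = G$.

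The argument is essentially bookkeeping, so the only points I expect to require care are the additive-constant ambiguity in $K^g$ — and hence in $g_*H$ — which forces one to be a little careful about what ``$G = g_*H$'' means on the nose, and the observation that $g_*(\text{const})$ is always independent of the $\Sigma$-coordinate, which is exactly what makes the $z$-independence of the slope a necessary condition. The arithmetic condition $\int_0^1\sigma\in\ZZ$ is nothing but the translation, via $\sigma = \varphi'$, of the constraint $\varphi(1)-\varphi(0)\in\ZZ$ that makes $g$ close up to a genuine loop of Hamiltonian diffeomorphisms.
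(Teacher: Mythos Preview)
Your proof is correct and follows essentially the same route as the paper: compute $g_*H = H + K^g_t = \text{const} + \varphi'(t)\,r$ to read off $\sigma(t)=\varphi'(t)$ and hence $\int_0^1\sigma = \varphi(1)-\varphi(0)\in\ZZ$; conversely, set $\varphi(t)=\int_0^t\sigma$ and check that the resulting $g$ and $H_t\defeq c(t)$ recover $G$. Your treatment of the additive constant and of the $z$-independence of the slope is slightly more explicit than the paper's, but the argument is the same.
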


\begin{proof}
 For a loop of Hamiltonian diffeomorphisms $g_t(x,r)$ and $H \equiv \text{constant}$,
 \[
  g_*H_t = H_t + K_t^g = \text{constant} + \varphi'(t) \cdot r
 \]
 has slope $\sigma(t) = \varphi'(t)$. The integral
 \[
  \int_0^1 \sigma(t,r) \, dt = \int_0^1 \varphi'(t) \, dt
 \]
 is the winding number of the loop $\varphi\colon S^1 \to S^1$, hence it has values in $\ZZ$.

 Conversely, assume the slope $\sigma(t)$ of $G$ fulfills $\int_0^1 \sigma(t) \, dt \in \ZZ$. Then, define
 \[
  \varphi(t) \defeq \int_0^t \sigma(\tau) \, d\tau,
 \]
 which fulfills $\varphi(1) \in \ZZ$ and thus descends to a loop on $S^1$. The corresponding loop of Hamiltonian diffeomorphisms $g_t(r, x) = (r, e^{2\pi i \varphi(t)} \, . \, x)$ is associated with the Hamiltonian $K_t^g = \sigma(t)r$, which coincides (up to a constant) with $G$.
\end{proof}

Note that for $g_*H$, with $g_t$ as in \eqref{eq_Ham_loop}, Lemma~\ref{lem_cyl_cpt} cannot be applied directly, because $g_*H$ is not constant on the negative end. 
However, the bijection of moduli spaces from Lemma~\ref{lem_g_properties} still holds, so the compactness of the moduli space $\mcM(\gamma_+, \gamma_-; H,J)$ induces compactness of the moduli space $\mcM(g\cdot \gamma_+, g\cdot \gamma_-; g_*H, g_*J)$.
This gives a possible definition of $HF_*(g_*H)$, basically as the image of $HF_*(H)$ under $S_g$.

A problem with this definition is that one has to worry about compactness again for the continuation maps. 
We deal with this compactness issue in three steps:

\begin{itemize}
 \item Given a continuation map $\Phi^{H \tilde H}$ between two Hamiltonians $H, \tilde H$ as in Figure~\ref{fig:V_shaped_H}, we get a continuation map between $g_*H$ and $g_*\tilde H$ by using the fact that $g$ gives a bijection of the moduli spaces involved. This means that we can define continuation maps for Hamiltonians within the family $g_*H$ for a fixed $g$.
 \item In Lemma \ref{lem_continuation_defined}, we show that if $g_1$ is homotopic to $g_2$, we can define continuation maps between ${g_1}_*H$ and ${g_2}_*\tilde H$.
 \item In Proposition \ref{prop_cutoff}, we show that we get the same Floer homology as for $g_*H$ if we make the Hamiltonian constant near the negative end of the symplectization. Therefore, this Floer homology can be used in the limit process to $\check{SH}(\Sigma)$.
\end{itemize}

\begin{lem} \label{lem_continuation_defined}
 Let $g_1$ and $g_2$ be homotopic through loops of Hamiltonian diffeomorphisms. Then, for $H, \tilde H$ two Hamiltonians as in Figure~\ref{fig:V_shaped_H} (with $\tilde H$ steeper at~$\infty$ than $H$) and $J, \tilde J$ regular almost complex structures, there exists a continuation map from $({g_1}_*H, {g_1}_*J)$ to $({g_2}_*\tilde H, {g_2}_*\tilde J)$. 
\end{lem}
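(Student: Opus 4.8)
The plan is to produce the continuation map by transporting the ordinary continuation map of the symplectization along a homotopy between $g_1$ and $g_2$. Fix a homotopy $\{g^r\}_{r\in[0,1]}$ of based loops of Hamiltonian diffeomorphisms of $\RR_+\times\Sigma$ with $g^0=g_1$, $g^1=g_2$ and $g^r_0=\id$ for all $r$ (in the case of interest, where $g_1,g_2$ are Reeb rotations with reparametrizations $\varphi_1,\varphi_2$ of equal winding number, one may simply take $g^r$ to be the Reeb rotation with reparametrization $(1-r)\varphi_1+r\varphi_2$). Since $\pi_1(\Sigma)=0$, each symplectic vector field $\partial_r g^r_t\circ(g^r_t)^{-1}$ is Hamiltonian; fix generating functions $L^r_t$ depending smoothly on $(r,t)$. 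Choose a cutoff $\rho\in C^\infty(\RR,[0,1])$ which is $0$ near $-\infty$ and $1$ near $+\infty$, and set $\Psi_{s,t}\defeq g^{\rho(s)}_t$, a smooth family of symplectomorphisms of $\RR_+\times\Sigma$ parametrized by $(s,t)\in\RR\times S^1$ which equals $(g_1)_t$ for $s\ll 0$ and $(g_2)_t$ for $s\gg 0$. Finally, since $\tilde H$ is steeper than $H$, pick a monotone $s$-interpolation $(H_{s,t},J_{s,t})$ of Floer data on the symplectization, constant in $s$ near $\pm\infty$ with values $(H,J)$ and $(\tilde H,\tilde J)$ and with Hamiltonians constant near the negative end; by the compactness statement following Lemma~\ref{lem_cyl_cpt} (cf.\ Corollary~\ref{cor_defined_in_sympl}) the continuation moduli spaces $\mcM(\gamma_+,\gamma_-;H_s,J_s)$ in $\RR_+\times\Sigma$ are compact, and for generic data transversally cut out.

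Now push this data forward by $\Psi$: set $\hat J_{s,t}\defeq(\Psi_{s,t})_*J_{s,t}$ and let $\hat H_{s,t}$ be the $\Psi_{s,t}$-pushforward of $H_{s,t}$, corrected by the term $\rho'(s)\,L^{\rho(s)}_t$ produced by $\partial_s\Psi$ — this is exactly the recipe of Lemma~\ref{lem_g_properties} together with the extra $s$-derivative contribution. A direct computation, the $s$-dependent analogue of Lemma~\ref{lem_g_properties}, shows that $u\mapsto\Psi_{s,t}\circ u$ is a bijection from $\mcM(\gamma_+,\gamma_-;H_s,J_s)$ onto the space of solutions of the Floer continuation equation for $(\hat H,\hat J)$ with limits $g_1\cdot\gamma_+$ at $s=-\infty$ and $g_2\cdot\gamma_-$ at $s=+\infty$; since it conjugates the linearized operators by $d\Psi$, transversality transfers as well. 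As $\rho'\equiv 0$ near $s=\pm\infty$, the correction term vanishes there, and $\Psi$ is constant equal to $(g_1)_t$, resp.\ $(g_2)_t$; so by Lemma~\ref{lem_g_properties} the data $(\hat H,\hat J)$ equals $({g_1}_*H,{g_1}_*J)$ for $s\ll 0$ and $({g_2}_*\tilde H,{g_2}_*\tilde J)$ for $s\gg 0$. Hence $(\hat H,\hat J)$ is a smooth, asymptotically constant family of Floer data interpolating the two prescribed endpoints.

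It remains to check that the $(\hat H,\hat J)$-moduli spaces are compact; this is the only point at which the earlier arguments do not apply verbatim, since $\hat H$ is not constant near the negative end of the symplectization (it carries the rotation slope of $g^{\rho(s)}$), so the SFT-compactness and maximum-principle argument of Lemma~\ref{lem_cyl_cpt} cannot be invoked directly. The remedy is the bijection above: every solution in $\mcM(\gamma_+,\gamma_-;H_s,J_s)$ lies in a single compact set $K\subset\RR_+\times\Sigma$ (bounded away from $r=0$ by the argument of Lemma~\ref{lem_cyl_cpt}, and from above by the maximum principle for monotone continuations), and $\Psi$ sends $(\RR\times S^1)\times K$ into a compact subset of $\RR_+\times\Sigma$ — outside a compact $s$-interval $\Psi$ takes only the values $(g_1)_t,(g_2)_t$, while on that interval it is a continuous family of maps on the compact set $S^1\times K$. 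Therefore the transported solutions stay in a fixed compact set, cannot approach $r=0$, and the standard breaking analysis at the fixed ends $s=\pm\infty$ compactifies the moduli spaces as in the closed case. Counting the isolated solutions then yields a chain map $CF_*({g_1}_*H,{g_1}_*J)\to CF_*({g_2}_*\tilde H,{g_2}_*\tilde J)$, which is the required continuation map; on homology it realizes $S_{g_2}\circ\Phi^{H\tilde H}\circ S_{g_1}^{-1}$, and for a constant homotopy, resp.\ for $H=\tilde H$, it reduces to the two building blocks — the fixed-$g$ continuation and the ``rotation'' map — described in the bullets preceding the lemma. As anticipated, the main obstacle is exactly this compactness, and it is dealt with by the $\Psi$-equivariant identification with the already-compact ordinary continuation moduli spaces.
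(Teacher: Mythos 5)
Your proof is correct and takes essentially the same approach as the paper: choose a standard continuation homotopy with compact moduli spaces, transport it by an $s$-dependent family of diffeomorphisms interpolating the two loops (your $\Psi_{s,t}=g^{\rho(s)}_t$, the paper's $g_{s,t}$), and deduce compactness of the transported moduli spaces from the bijection. The only cosmetic difference is that the paper first reduces to $g_2=\id$ by concatenating with $g_2^{-1}$, whereas you handle general $(g_1,g_2)$ directly; you also spell out the correction term $\rho'(s)L_t^{\rho(s)}$ and the compactness transfer a bit more explicitly, which are nice clarifications but not a different route.
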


\begin{proof}
 By concatenation with $g_2^{-1}$, we can reduce the general case to the case $g_2=\id$. 
 Denote by $g_{s,t}$, $s\in\RR$, the homotopy from $g_t$ to $\id$, and arrange it such that $g_{s,t} = \id$ for $s\geq 1$ and $g_{s,t}=g_t$ for $s\leq -1$. By the assumption on the slopes, there is a homotopy $(H_{s,t}, J_{s,t})$ from $(H,J)$ to $(\tilde H, \tilde J)$ that defines a continuation map. In particular, the moduli spaces
 \[
  \mcM(\gamma, \tilde \gamma; H_{s,t}, J_{s,t})
 \]
 are compact for all $H$-orbits $\gamma$ and $\tilde H$-orbits $\tilde \gamma$ with $\mu(\tilde \gamma) -\mu(\gamma) = 0$. Now, we can apply $g_{s,t}$ to its elements. As in Lemma \ref{lem_g_properties} (and because $g_{s,t}=\id$ for $s\geq 1$), this gives a bijective correspondence between the moduli space above and
 \[
  \mcM \left(g_t \cdot \gamma, \tilde \gamma; (g_{s,t})_* H_{s,t}, (g_{s,t})_* J_{s,t} \right).
 \]
 Hence, these moduli spaces are also compact and define a continuation map from $(g_*H, g_*J)$ to $(\tilde H, \tilde J)$.
\end{proof}

\begin{prop} \label{prop_cutoff}
 Denote by $[g_*H]_0$ the Hamiltonian which, up to a smoothing, equals $g_*H$ on $(e^{-T}, \infty)\times \Sigma$ and is constant on $(0, e^{-T})\times \Sigma$. Then, for $T$ is sufficiently large (dependent on $g_*H$), there is a bijection between the zero-dimensional moduli spaces
\[
 \mcM(\gamma_+, \gamma_-;g_*H,g_*J) \cong \mcM(\gamma_+, \gamma_-; [g_*H]_0, g_*J).
\]
\end{prop}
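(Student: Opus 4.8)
The plan is to show that, once $T$ is large enough, every Floer cylinder occurring in either moduli space lies entirely in the region $\{r>e^{-T}\}\subset\RR_+\times\Sigma$, where $g_*H$ and $[g_*H]_0$ coincide and the almost complex structure is $g_*J$ on both sides; the two moduli spaces then consist of the same maps and the bijection is the identity. Two observations are used throughout. First, $g_t(r,z)=(r,e^{2\pi i\varphi(t)}.z)$ acts only on the $\Sigma$-factor, so it preserves $r$ and $\pi_{\RR_+}(g\cdot u)=\pi_{\RR_+}(u)$ for every map $u$. Second, the $S^1$-action is by the Reeb flow, which preserves $\alpha$ (hence $\xi$ and $R_\alpha$) and commutes with the Liouville scaling, so $g_*J$ is again SFT-like near the negative end. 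Finally, for $T$ large the $1$-periodic orbits of $g_*H$ --- being the $g$-images of orbits of $H$ in the convex part, all at $r$-values bounded away from $0$ --- lie in $\{r>e^{-T}\}$ and agree there with the corresponding orbits of $[g_*H]_0$, so a common pair $\gamma_+,\gamma_-$ with $\mu(\gamma_+)-\mu(\gamma_-)=1$ (the moduli spaces being $0$-dimensional) makes sense.

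\emph{Cylinders for $g_*H$ stay above a fixed level.} By Lemma~\ref{lem_g_properties} applied to $H$ and $g$, the map $u\mapsto g^{-1}\cdot u$ identifies $\mcM(\gamma_+,\gamma_-;g_*H,g_*J)$ with $\mcM(g^{-1}\cdot\gamma_+,g^{-1}\cdot\gamma_-;H,J)$, which is compact by Lemma~\ref{lem_cyl_cpt} (using index-positivity) and hence, being $0$-dimensional, finite; its cylinders have images in a common compact subset of $\RR_+\times\Sigma$. Since $g$ preserves $r$, the same compact set contains the images of all cylinders in $\mcM(\gamma_+,\gamma_-;g_*H,g_*J)$, so $r\circ u\ge r_0$ for some $r_0>0$ depending only on $g,H,J$, not on $T$. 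Choosing $T$ with $e^{-T}<r_0$ (and the smoothing of $[g_*H]_0$ confined below $r_0$), every such $u$ lies where $g_*H=[g_*H]_0$ and therefore solves Floer's equation for $([g_*H]_0,g_*J)$ as well; this is the inclusion ``$\subseteq$''.

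\emph{Cylinders for $[g_*H]_0$ stay above $e^{-T}$, uniformly in $T$.} For each fixed $T$, $[g_*H]_0$ is constant near the negative end and $g_*J$ is SFT-like there, so Lemma~\ref{lem_cyl_cpt} gives compactness of $\mcM(\gamma_+,\gamma_-;[g_*H]_0,g_*J)$; we need this uniformly in $T$, which we obtain by re-running its proof along a sequence. Suppose, to the contrary, that there are $T_j\to\infty$ and cylinders $u_j\in\mcM(\gamma_+,\gamma_-;[g_*H]_0^{(T_j)},g_*J)$ that are not solutions for $g_*H$; then each $u_j$ enters the region, of $r$-values tending to $0$, where the two Hamiltonians differ, so $\inf\pi_{\RR_+}(u_j)\to0$. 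The energy $E(u_j)=\mcA_{[g_*H]_0^{(T_j)}}(\gamma_+)-\mcA_{[g_*H]_0^{(T_j)}}(\gamma_-)$ equals $\mcA_{g_*H}(\gamma_+)-\mcA_{g_*H}(\gamma_-)$, since $\gamma_\pm$ lie where the Hamiltonians agree, hence is independent of $j$; as $[g_*H]_0^{(T_j)}$ is constant near the negative end for every $j$, the usual SFT-compactness produces a limit broken building, with at least two levels because $\inf\pi_{\RR_+}(u_j)\to0$. Its top level is a punctured Floer cylinder for $(g_*H,g_*J)$ --- the $C^\infty_{\mathrm{loc}}$-limit of $[g_*H]_0^{(T_j)}$ on $\{r>0\}$ being $g_*H$ --- asymptotic to $\gamma_\pm$ at the cylindrical ends and to contractible Reeb orbits $c_1,\dots,c_k$ ($k\ge1$) at the punctures, with connected domain by the maximum-principle argument of \cite[Section~5.2]{BO_seq} near $\gamma_-$ (still valid since $g_*H=H+\varphi'(t)\,r$ is convex there). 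By Lemma~\ref{lem_dim_broken_curves} this top level lies in a moduli space of (reduced) virtual dimension $\mu(\gamma_+)-\mu(\gamma_-)-\sum_{i=1}^k(\mu(c_i)+n-3)-1=-\sum_{i=1}^k(\mu(c_i)+n-3)<0$ by index-positivity, the extra $-1$ coming from dividing out the $\RR$-translation as in Lemma~\ref{lem_cyl_cpt}; transversality is inherited from genericity of $J$ via Lemma~\ref{lem_g_properties}. Hence this space is empty for generic $J$ --- a contradiction. So for $T$ past some threshold, every cylinder in $\mcM(\gamma_+,\gamma_-;[g_*H]_0,g_*J)$ stays in $\{r>e^{-T}\}$ and solves Floer's equation for $g_*H$; this is ``$\supseteq$''.

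Putting the two inclusions together, for $T$ sufficiently large the two $0$-dimensional moduli spaces consist literally of the same maps, which yields the claimed bijection. The main obstacle is the second inclusion, and within it the uniformity in $T$: applying Lemma~\ref{lem_cyl_cpt} to a single $[g_*H]_0$ only gives a lower bound on $r$ that might a priori degenerate as $T\to\infty$, so one is forced into the SFT-compactness argument with a varying Hamiltonian, the key inputs being the $T$-independent energy bound (because $\gamma_\pm$ lie in the common region) and index-positivity.
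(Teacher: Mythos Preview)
Your proof is essentially correct and uses the same core mechanism as the paper (the index bound forces any putative broken building to have a top level of negative virtual dimension), and your first inclusion ``$\subseteq$'' is exactly the paper's argument. The difference lies in how the reverse inclusion is obtained.

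The paper does \emph{not} run an SFT-compactness argument directly on cylinders for $(g_*H,g_*J)$ or on the varying family $[g_*H]_0^{(T_j)}$. Instead, for a fixed $T$ it takes a hypothetical extra cylinder $u'\in\mcM(\gamma_+,\gamma_-;[g_*H]_0,g_*J)$, applies $g^{-1}$, and works with $g^{-1}u'\in\mcM(g^{-1}\gamma_+,g^{-1}\gamma_-;g^*[g_*H]_0,J)$. The point of this pullback is that $g^*[g_*H]_0=H$ on $(e^{-T},\infty)\times\Sigma$, and $H$ is \emph{constant} on a neighbourhood of the negative end there; one is then literally in the setting of Lemma~\ref{lem_cyl_cpt}/Corollary~\ref{cor_defined_in_sympl}, and the standard neck-stretching of \cite{BO_seq} applies without modification to conclude that $g^{-1}u'$ is already an $(H,J)$-cylinder, hence $u'$ is a $(g_*H,g_*J)$-cylinder.

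Your route --- letting $T_j\to\infty$ and asserting that ``usual SFT-compactness'' yields a top level for $(g_*H,g_*J)$ asymptotic at its extra punctures to Reeb orbits --- is not quite covered by the standard statements you cite: the limiting Hamiltonian $g_*H$ has nonconstant, time-dependent slope $\varphi'(t)$ near the negative end, so the asymptotics at a negative puncture are not a priori Reeb orbits in the usual sense, and the compactness theorem in \cite{BO_seq} is formulated for Hamiltonians that are constant on the cylindrical end. This is precisely the difficulty the paper's trick of applying $g^{-1}$ is designed to remove. Your argument becomes rigorous if you first pull back by $g^{-1}$ (so that the neck region carries a genuinely constant Hamiltonian and $J$ rather than $g_*J$), after which your sequence argument and the paper's fixed-$T$ neck-stretch are equivalent formulations of the same thing; note that your appeal to Lemma~\ref{lem_g_properties} for transversality already implicitly uses this identification.
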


\begin{proof}
Denote by $u_1, \ldots, u_n$ the elements of the moduli space $\mcM(\gamma_+, \gamma_-;g_*H,g_*J)$.
By compactness, they live in a compact region $[e^{-T}, e^T] \times \Sigma$ of the symplectization. We choose this value for $T$. Then, in this region, $g_*H = [g_*H]_0$, hence $u_1, \ldots, u_n$ are also elements of $\mcM(\gamma_+, \gamma_-; [g_*H]_0, g_*J)$.

Assume that the latter moduli space has some further element $u'$. By applying $g_t^{-1}$, this gives an element 
\[
 g^{-1} u' \in \mcM(g^{-1} \gamma_+, g^{-1} \gamma_-; g^*[g_*H]_0, J).
\]
Since $g^*[g_*H]_0 = H = \text{constant}$ on $(e^{-T}, \delta) \times \Sigma$, we can use a neck-stretching operation there, as in the proof of Corollary~\ref{cor_defined_in_sympl}. So we insert a piece of the symplectization near $\{e^{-T}\} \times \Sigma$. Under this operation, the Floer cylinder $g^{-1} u'$ converges to a broken cylinder as in Figure~\ref{fig:cylinder_breaking}. 
However, as in the proof of Lemma~\ref{lem_cyl_cpt}, the index condition on the Reeb orbits makes sure that the cylinder is in fact unbroken. This implies that $g^{-1} u'$ was in fact a Floer cylinder for the original Hamiltonian $H$, hence $u'$ was a Floer cylinder for the Hamiltonian $g_* H$. This contradicts the assumption that $u'$ was not among the elements $u_1, \ldots, u_n$.

Thus, all elements of $\mcM(\gamma_+, \gamma_-; [g_*H]_0, g_*J)$ are already contained in $\mcM(\gamma_+, \gamma_-; g_*H, g_*J)$, which gives the bijection.
\end{proof}

Together, Lemma~\ref{lem_continuation_defined} and Proposition~\ref{prop_cutoff} show that Hamiltonians $g_*H$ (with $H$ as in Figure~\ref{fig:V_shaped_H}) can be used in the definition of $\check{SH}_*(\Sigma)$. Indeed, by Lemma~\ref{lem_continuation_defined}, we can arrange that the slope of $g_*H$ is time-independent. Further, we can use continuation maps from  $g_*H$ to $g_*\tilde H$ such that the slopes $\mu_1, \mu_2$ grow arbitrarily large, while $\delta$ remains small and the slope of $g_* \tilde H$ at the negative end of the symplectization stays constant.
This makes sure any orbits created in the transition from $g_* \tilde H$ to $[g_* \tilde H]_0$ have action outside of the fixed action window $(a,b)$.
Hence, the generators of $HF^{(a,b)}([g_*\tilde H]_0)$ are the same as those of $HF^{(a,b)}(g_* \tilde H)$, and Proposition~\ref{prop_cutoff} shows that the differential agrees as well. As $[g_*\tilde H]_0$ is constant at the negative end, it is clear that it can be used to define $\check{SH}_*(\Sigma)$.

The statements of Lemma~\ref{lem_g_properties} and Corollary~\ref{cor_S_g_iso} hold as in the closed case.

\begin{example} \label{ex_simple_loop}
Take the specific loop of Hamiltonian diffeomorphisms
\begin{equation} \label{eq_simple_loop}
 g_t(r,z) = (r, e^{2\pi i t}.z),
\end{equation}
i.e.\ the case $\varphi = \id_{[0,1]}$, and normalize the corresponding Hamiltonian to
\[
 K_t^g(t,r,z) = r - 1.
\]
Then, for $H_t$ as in Figure~\ref{fig:V_shaped_H} (only dependent on the radial coordinate $r$), the Hamiltonian
\[
 g_*H(t,r,z) = H_t(r) + K_t^g(t,r,z) = H_t(r) + (r-1)
\]
is again normalized such that $g_*H = -\varepsilon$ at $r=1$. Thus, except for the non-zero slope at the negative end (which equals one), $g_*H$ looks as in Figure~\ref{fig:V_shaped_H}, but with $\mu_1$ decreased and $\mu_2$ increased by one, respectively.
As for the action, first note that because of the chain rule
\[
 \frac{d}{dt} (g_t\gamma(t)) = (g_t)_* \gamma'(t) + \frac{d}{d\tau}\Big|_{\tau=t} g_\tau(\gamma(t))
\]
and $g_t^*\alpha = \alpha$, we get that
\[
 \int_{S^1} (g_t \gamma)^* \alpha = \int_{S^1} \gamma^*\alpha + 1.
\]
For the second term,
\[
 -\int_{S^1} (g_* H) (g_t \gamma(t)) \,dt = -\int_{S^1} H(\gamma(t)) \,dt - \underbrace{\int_{S^1} K_t^g(\gamma(t)) \,dt}_{\approx 0},
\]
where the second summand vanishes up to an arbitrary small error due to the smoothing of $H$. Hence, except for this small error,
\begin{equation} \label{eq_action_Sg}
 \mcA_{g_*H}(g\cdot \gamma) = \mcA_H(\gamma) + 1,
\end{equation}
which gives an isomorphism
\[
 S_g \colon HF^{(a,b)}(H_{\mu_1,\mu_2}) \stackrel{\cong}{\longrightarrow} HF^{(a+1,b+1)}(H_{\mu_1-1, \mu_2+1}).
\]
Taking the direct limits $\mu_1, \mu_2 \to \infty$, this induces an isomorphism
\begin{equation} \label{eq_Sg_in_action_window}
S_g \colon \check{SH}^{(a,b)}(\Sigma) \stackrel{\cong}{\longrightarrow} \check{SH}^{(a+1,b+1)}(\Sigma),
\end{equation}
and, after taking the additional limit from \eqref{eq_lim_SH_2}, an isomorphism on $\check{SH}(\Sigma)$, which we still denote by $S_g$.
The multiplication
\begin{equation}\label{eq_module_str}
 \ZZ_2[t,t^{-1}] \times \check{SH}(\Sigma) \to \check{SH}(\Sigma), \qquad (t^k, \gamma) \mapsto S_g^k(\gamma) 
\end{equation}
gives $\check{SH}(\Sigma)$ the structure of a module over the ring $\ZZ_2[t,t^{-1}]$.

We will see shortly that if $I(g)\neq 0$, this module is free and finitely generated. For $I(g)=0$, the situation is somewhat different. We will see that in this case, similar statements hold if one replaces Laurent polynomials by Laurent series.

\begin{lem} \label{lem_module_torsion_free}
 For $I(g)\neq 0$, the module structure of $\check{SH}(\Sigma)$ over $\ZZ_2[t,t^{-1}]$ is torsion-free.
\end{lem}

\begin{proof}
 Fix some $x\in \check{SH}(\Sigma)$. As $I(g)\neq 0$, all the elements $S_g^k(x)$ for $k\in\ZZ$ have different degrees, so they are linearly independent.
\end{proof}

In the following discussion, we will make use of the long exact sequence
\begin{equation} \label{eq_les_SH_filtr}
 \cdots \longrightarrow \check{SH}_k^{(a,b)}(\Sigma) \longrightarrow \check{SH}_k^{(a,c)}(\Sigma) \longrightarrow \check{SH}_k^{(b,c)}(\Sigma) \longrightarrow \check{SH}_{k-1}^{(a,b)}(\Sigma) \longrightarrow \cdots
\end{equation}
for any $-\infty \leq a < b < c \leq \infty$ with $a,b,c\notin \Spec(\Sigma)$. This sequence is induced directly from the short exact sequence of chain complexes
\[
 0 \longrightarrow \check{CF}_k^{(a,b)}(H) \longrightarrow \check{CF}_k^{(a,c)}(H) \longrightarrow \check{CF}_k^{(b,c)}(H) \longrightarrow 0
\]
and the fact that the limits preserve exactness (which is true because the inverse limit is only applied to finite dimensional vector spaces, and is always true for the direct limit, see also \cite[Remark~2.7]{Ciel_Fra_Oan}).
Note that \eqref{eq_les_SH_filtr} implies in particular
\begin{equation} \label{eq_dim_in_filtr_subadditive}
 \dim_{\ZZ_2} \left(\check{SH}_k^{(a,c)}(\Sigma)\right) \leq \dim_{\ZZ_2} \left(\check{SH}_k^{(a,b)}(\Sigma)\right) + \dim_{\ZZ_2} \left(\check{SH}_k^{(b,c)}(\Sigma)\right).
\end{equation}

\begin{lem} \label{lem_SH_fin_gen_Ig_neq_0}
 For $I(g)\neq 0$, the $\ZZ_2[t,t^{-1}]$-module $\check{SH}(\Sigma)$ is finitely generated.
\end{lem}

\begin{proof}
 We first show that if $I(g)\neq 0$, $\dim_{\ZZ_2}(\check{SH}_k(\Sigma)) < \infty$ for any fixed degree $k$. 
 Indeed, by \eqref{eq_dim_in_filtr_subadditive} and the isomorphism
 \[
  S_g \colon \check{SH}_k^{(a,a+1)}(\Sigma) \stackrel{\cong}{\longrightarrow} \check{SH}_{k+2I(g)}^{(a+1,a+2)}(\Sigma),
 \]
 we get\footnote{We use the notation $\check{SH}_k^{\shiftedinterval{n}{m}}(\Sigma)$ with $n,m\in \ZZ$ as shorthand for $\check{SH}_k^{(n-\epsilon, m-\epsilon)}(\Sigma)$ with $\epsilon>0$ sufficiently small.}
 \begin{align}
  \dim_{\ZZ_2}(\check{SH}_k(\Sigma)) & \leq \sum_{\ell \in \ZZ} \dim_{\ZZ_2} \left( \check{SH}^{\shiftedinterval{\ell}{\ell+1}}_k(\Sigma) \right) \nonumber \\
  & = \sum_{\ell\in \ZZ} \dim_{\ZZ_2} \left( \check{SH}^{\shiftedinterval{0}{1}}_{k-2I(g)\cdot \ell}(\Sigma) \right) \label{eq_dimension_estimate} \\
  & \leq \dim_{\ZZ_2} \left( \check{SH}^{\shiftedinterval{0}{1}}(\Sigma) \right), \nonumber
 \end{align}
 which is finite because the action spectrum is discrete.
 
 So now, we know that 
 \begin{equation} \label{eq_fin_dim_for_I_neq_0}
  \sum_{j=0}^{2I(g)-1} \dim_{\ZZ_2}(\check{SH}_j(\Sigma)) < \infty
 \end{equation}
 (and similarly if $I(g)$ is negative). Further, any $x\in \check{SH}_k(\Sigma)$ can be written as
 \[
  x = \left(S_g\right)^i (y),
 \]
 where $0\leq \mu(y) < 2I(g)$ (basically, $y \defeq \left(S_g\right)^{-i} (x)$ with $i = \floor{\frac{\mu(x)}{2I(g)}}$). Hence, $\check{SH}_k(\Sigma)$ is generated by a basis of the vector space in \eqref{eq_fin_dim_for_I_neq_0}.
\end{proof}

It now follows algebraically that if $I(g)\neq 0$, $\check{SH}(\Sigma)$ is a free and finitely generated module over $\ZZ_2[t,t^{-1}]$.
Indeed, as a localization of the principal ideal domain $\ZZ_2[t]$, the ring $\ZZ_2[t,t^{-1}]$ is itself a principal ideal domain (\cite[Exercise~II.4]{Lang}).
It follows from the structure theorem for finitely generated modules over a principal ideal domain (see e.g.\ \cite[Theorem~9.3]{Rotman} for the version we need) that any finitely generated, torsion-free module over a principal ideal domain is free. Hence $\check{SH}(\Sigma)$ is a free and finitely generated $\ZZ_2[t,t^{-1}]$-module. 

The proof of Lemma~\ref{lem_SH_fin_gen_Ig_neq_0} shows that its dimension (i.e.\ the number of generators) is bounded from above by $\sum_{j=0}^{2I(g)-1} \dim_{\ZZ_2}(\check{SH}_j(\Sigma))$. Moreover, the estimate in \eqref{eq_dimension_estimate} can be sharpened to
 \begin{align*}
  \sum_{j=0}^{2I(g)-1} \dim_{\ZZ_2}(\check{SH}_j(\Sigma)) & \leq \sum_{j=0}^{2I(g)-1} \sum_{\ell \in \ZZ} \dim_{\ZZ_2} \left( \check{SH}^{\shiftedinterval{\ell}{\ell+1}}_j(\Sigma) \right) \\
  & = \sum_{j=0}^{2I(g)-1} \sum_{\ell\in \ZZ} \dim_{\ZZ_2} \left( \check{SH}^{\shiftedinterval{0}{1}}_{j-2I(g)\cdot \ell}(\Sigma) \right) \\
  & = \sum_{k\in \ZZ} \dim_{\ZZ_2} \left( \check{SH}^{\shiftedinterval{0}{1}}_k(\Sigma) \right) \\
  & = \dim_{\ZZ_2} \left( \check{SH}^{\shiftedinterval{0}{1}}(\Sigma) \right),
 \end{align*}
so we get $\dim_{\ZZ_2[t,t^{-1}]} \check{SH}(\Sigma) \leq \dim_{\ZZ_2} \check{SH}^{\shiftedinterval{0}{1}}(\Sigma)$.

\bigskip

Now, we turn to the case $I(g)=0$. The key difference here is that all the elements $S_g^k(x)$ for $k\in\ZZ$ have the same degree. So, the analog of Lemma\ \ref{lem_module_torsion_free} does not hold, at least not with the given proof.

However, one should really replace the ring $\ZZ_2[t,t^{-1}]$ with the ring of semi-infinite Laurent series $\ZZ_2((t^{-1}))$. The reason is that because of the inverse limit over $a$ in \eqref{eq_lim_SH_2}, infinite sums of the form
\[
 \sum_{k=-\infty}^N \lambda_k S_g^k(x), \qquad \lambda_k\in\ZZ_2
\]
may appear in $\check{SH}_*(\Sigma)$. Note that this cannot occur if $I(g)\neq 0$, because \eqref{eq_lim_SH_2} fixes the degree before taking the limits.

So, as $\kk \defeq \ZZ_2((t^{-1}))$ is a field, this means that $\check{SH}(\Sigma)$ is a vector space over $\kk$. We will now prove an analog of Lemma~\ref{lem_SH_fin_gen_Ig_neq_0}.

\begin{lem} \label{lem_SH_fin_gen_Ig_eq_0}
 If $I(g) = 0$, $\check{SH}(\Sigma)$ is a finite dimensional vector space over $\ZZ_2((t^{-1}))$.
\end{lem}

Note that this lemma cannot be proven in the same way as Lemma~\ref{lem_SH_fin_gen_Ig_neq_0}, as $\dim_{\ZZ_2}(\check{SH}_k(\Sigma))$ may very well be infinite for some $k$ (and there are examples where it is in fact infinite). 

\begin{proof}
 By the discreteness of the action spectrum, $\check{SH}^{\shiftedinterval{0}{1}}(\Sigma)$ has finite dimension over $\ZZ_2$. Hence, it is non-zero only in finitely many degrees.
 By
 \[
  \check{SH}^{(a,b)}_k(\Sigma) \stackrel{S_g}{\cong} \check{SH}^{(a+1,b+1)}_k(\Sigma)
 \]
 and \eqref{eq_dim_in_filtr_subadditive}, this implies that $\check{SH}_k(\Sigma)$ is non-zero only for finitely many degrees~$k$. Thus, it suffices to show that $\check{SH}_k(\Sigma)$ (which is a sub vector space of $\check{SH}(\Sigma)$) has finite dimension over $\ZZ_2((t^{-1}))$.
 
 We will now examine a general element of $\check{SH}_k(\Sigma)$. First, by the definition of the direct limit over $b$, it can be represented by $x\in \check{SH}^{(-\infty, b)}_k(\Sigma)$ for some $b\in \RR$. Without loss of generality, we can assume $b$ to be an integer.
 
 Furthermore, to resolve the inverse limit over $a$, we use the cofinal inverse subsystem
 \begin{equation} \label{eq_inverse_subsystem}
   \cdots \longrightarrow \check{SH}_k^{\shiftedinterval{-n-1}{b}}(\Sigma) \longrightarrow \check{SH}_k^{\shiftedinterval{-n}{b}}(\Sigma) \longrightarrow \cdots \longrightarrow \check{SH}_k^{\shiftedinterval{-b-1}{b}}(\Sigma)
 \end{equation}
 for $n\in \ZZ$. Thus, the element $x\in \check{SH}^{(-\infty, b)}_k(\Sigma)$ can be written as a sequence
 \begin{equation} \label{eq_general_element_of_SH}
  x = (\ldots, x_{-n-1}, x_{-n}, \ldots, x_{b-1}),
 \end{equation}
 where $x_{-n} \in \check{SH}^{\shiftedinterval{-n}{b}}_k(\Sigma)$ and $x_{-n-1} \longmapsto x_{-n}$ under the maps in \eqref{eq_inverse_subsystem}.

The long exact sequence
\begin{equation} \label{eq_particular_les_SH_filtr}
 \cdots \longrightarrow \check{SH}_k^{\shiftedinterval{-n-1}{-n}}(\Sigma) \stackrel{\alpha}{\longrightarrow} \check{SH}_k^{\shiftedinterval{-n-1}{b}}(\Sigma) \stackrel{\beta}{\longrightarrow} \check{SH}_k^{\shiftedinterval{-n}{b}}(\Sigma) \longrightarrow \cdots
\end{equation}
 from \eqref{eq_les_SH_filtr} can be split into short exact sequences of the form
\[
 0 \longrightarrow \im(\alpha) \longrightarrow \check{SH}_k^{\shiftedinterval{-n-1}{b}}(\Sigma) \longrightarrow \underbrace{\check{SH}_k^{\shiftedinterval{-n-1}{b}}(\Sigma) / \ker(\beta)}_{\cong \im(\beta)} \longrightarrow 0,
\]
 which split because $\ZZ_2$ is a field (although not canonically). So we get the decomposition
 \begin{equation} \label{eq_SH_filtr_decomposition}
  \check{SH}_k^{\shiftedinterval{-n-1}{b}}(\Sigma) \cong  \im(\alpha) \oplus \im(\beta).
 \end{equation}
 The equation $x_{-n-1} \longmapsto x_{-n}$ from above implies that 
 \begin{equation} \label{eq_x__under_decomposition}
  x_{-n-1} = \left(\alpha(y_{-n-1}), x_{-n}\right)
 \end{equation}
 under this decomposition for some $y_{-n-1} \in \check{SH}_k^{\shiftedinterval{-n-1}{-n}}$.
 
 Next, we will construct elements $\tilde v_1, \ldots, \tilde v_m$ that we claim to be a generating set of $\check{SH}_k$ over $\ZZ_2((t^{-1}))$.
 Choose a $\ZZ_2$-basis $v_1, \ldots, v_m$ of the image of the map $\check{SH}_k^{(-\infty,0)} \to \check{SH}_k^{\shiftedinterval{-1}{0}}$. So in particular, $v_i \in \im(\beta)$ for $\beta$ as in \eqref{eq_particular_les_SH_filtr} with $b=0$.
 Now, define
 \[
  \tilde v_i = (\ldots, \tilde v_{i,-3}, \tilde v_{i,-2}, \tilde v_{i, -1}) \in \check{SH}_k^{(-\infty,0)}(\Sigma), \qquad \tilde v_{i,-n} \in \check{SH}_k^{\shiftedinterval{-n}{0}}(\Sigma)
 \]
 recursively by $\tilde v_{i, -1} = v_i$ and $\tilde v_{i, -n-1} = (0, \tilde v_{i, -n})$ under the decomposition \eqref{eq_SH_filtr_decomposition}. As $v_i$ lies in the image of $\check{SH}_k^{(-\infty,0)} \to \check{SH}_k^{\shiftedinterval{-1}{0}}$, we know that $\tilde v_{i, -n} \in \im(\beta)$ for the corresponding map $\beta$, so this definition works.
 
 To see that $\tilde v_1, \ldots, \tilde v_m$ generate $\check{SH}_k$, consider again the general element of $\check{SH}_k$ from \eqref{eq_general_element_of_SH}. As $x_{b-1}$ necessarily lies in the image of $\check{SH}_k^{(-\infty,b)} \to \check{SH}_k^{\shiftedinterval{b-1}{b}}$, it can be written as a linear combination
 \[
  x_{b-1} = \sum_{i=1}^m \lambda_i S_g^b(v_i)
 \]
 for some $\lambda_i \in \ZZ_2$. Thus, $x$ will have the form
 \[
  x = \sum_{i=1}^m \lambda_i S_g^b(\tilde v_i) + \cdots,
 \]
 where the dots stand for summands where the exponent of $S_g$ is less than $b$.
 
 The remaining summands can be determined inductively, so that more and more sequence elements $x_i$ are correct. For the induction step, suppose that we have a linear combination of $\tilde v_i$ over $\ZZ_2((t^{-1}))$ so that the sequence elements $x_{-n}, \ldots, x_{b-1}$ already match and we want to add terms with $S_g^{-n}$ so that $x_{-n-1}$ matches. By \eqref{eq_x__under_decomposition}, the part in $\im(\beta)$ is fixed by $x_{-n}$ and we have to represent $\alpha(y_{-n-1})\in \im(\alpha)$ by the $\tilde v_i$.

 Consider the commutative diagram
 \[
  \begin{tikzcd}[column sep=large]
   \cdots \arrow[r] & \check{SH}_k^{(-\infty,-n)} \arrow[d] \arrow[r] & \check{SH}_k^{(-\infty,b)} \arrow[d] \arrow[r] & \check{SH}_k^{\shiftedinterval{-n}{b}} \arrow[d, equal] \arrow[r] & \cdots \\
   \cdots \arrow[r] & \check{SH}_k^{\shiftedinterval{-n-1}{-n}} \arrow[r, "\alpha"] & \check{SH}_k^{\shiftedinterval{-n-1}{b}} \arrow[r, "\beta"] & \check{SH}_k^{\shiftedinterval{-n}{b}} \arrow[r] & \cdots
  \end{tikzcd}
 \]
 where the rows are exact sequences. Since $x_{-n-1} = \left(\alpha(y_{-n-1}), x_{-n}\right)$ lies in the image of $\check{SH}_k^{(-\infty,b)} \to \check{SH}_k^{\shiftedinterval{-n-1}{b}}$, so does $\left(\alpha(y_{-n-1}), 0\right)$. The latter element is mapped by $\beta$ to zero, so a quick diagram chase shows that $y_{-n-1}$ lies in the image of $\check{SH}_k^{(-\infty,-n)} \to \check{SH}_k^{\shiftedinterval{-n-1}{-n}}$. Hence
 \[
  y_{-n-1} = \sum_{i=1}^m \mu_i S_g^{-n}(v_i)
 \]
 for some $\mu_i \in \ZZ_2$, and
 \[
  \sum_{i=1}^m \lambda_i S_g^b(\tilde v_i) + \cdots + \sum_{i=1}^m \mu_i S_g^{-n}(\tilde v_i)
 \]
 will match $x$ in the sequence representation \eqref{eq_general_element_of_SH} up to (and including) $x_{-n-1}$. This finishes the induction and hence the proof.
\end{proof}

We sum up this discussion in the following theorem:
\end{example}

\begin{thm} \label{thm_free_module}
 Assume that $\Sigma$ has periodic Reeb flow and is index-positive. If $I(g) \neq 0$ for $g_t$ as in \eqref{eq_simple_loop}, then $\check{SH}(\Sigma)$ is a free and finitely generated module over $\ZZ_2[t, t^{-1}]$, with the module structure from \eqref{eq_module_str}. If $I(g)=0$, then $\check{SH}(\Sigma)$ is a finite-dimensional vector space over $\kk = \ZZ_2((t^{-1}))$. 
 
 In both cases, the dimension is bounded by $\dim_{\ZZ_2} \left(\check{SH}^{\shiftedinterval{0}{1}}(\Sigma)\right)$.
\end{thm}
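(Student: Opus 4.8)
The plan is to collect the ingredients already assembled in Example~\ref{ex_simple_loop} and combine them with standard facts about modules over principal ideal domains. First I would observe that \eqref{eq_module_str} really defines a $\ZZ_2[t,t^{-1}]$-module structure: $S_g$ is a $\ZZ_2$-linear isomorphism with inverse $S_g^{-1}$, so setting $t^k\cdot x\defeq S_g^k(x)$ satisfies the module axioms, and the identical formula on the chain level makes $\check{SC}(\Sigma)$ a $\ZZ_2[t,t^{-1}]$-module for which $\dd$ is module-linear (this is precisely the statement that $S_g$ intertwines the differentials up to the degree shift $2I(g)$, as in Lemma~\ref{lem_g_properties} and Corollary~\ref{cor_S_g_iso}). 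By Lemma~\ref{lem_S_g^k_lin_indep} this module is torsion-free, both on the chain level and, after passing to homology via \cite[Proposition~3.4]{Ciel_Fra_Morse}, on $\check{SH}(\Sigma)$.

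For $I(g)\neq 0$, Lemma~\ref{lem_SC_fin_gen} gives that $\check{SC}(\Sigma)$ is finitely generated over $R\defeq\ZZ_2[t,t^{-1}]$. Since $R$ is the localization of the polynomial ring $\ZZ_2[t]$ at the powers of $t$, it is a principal ideal domain, and over a PID every submodule of a finitely generated module is again finitely generated; hence $\ker(\dd)\subset\check{SC}(\Sigma)$ is finitely generated, and therefore so is the quotient $\check{SH}(\Sigma)=\ker(\dd)/\im(\dd)$. A finitely generated torsion-free module over a PID is free by the structure theorem, so $\check{SH}(\Sigma)$ is a free finitely generated $R$-module. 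For the dimension bound I would use that the rank of a finitely generated module over a PID cannot increase when one passes to a submodule or to a quotient; applying this along $\ker(\dd)\subset\check{SC}(\Sigma)\twoheadrightarrow\check{SH}(\Sigma)$ bounds $\rank_R\check{SH}(\Sigma)$ by $\rank_R\check{SC}(\Sigma)$, which by the proof of Lemma~\ref{lem_SC_fin_gen} equals the number of Morse--Bott generators with action in $(-\epsilon,1-\epsilon)$.

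For $I(g)=0$, all the elements $S_g^k(\gamma)$ lie in a single degree, so the inverse limit in \eqref{eq_lim_SH_2} incorporates the semi-infinite sums $\sum_{k\le N}\lambda_k S_g^k(\gamma)$; thus $\check{SC}(\Sigma)$ is naturally a module over $\kk=\ZZ_2[t][[t^{-1}]]$, which is a field, and the argument of Lemma~\ref{lem_SC_fin_gen} shows it is finite-dimensional over $\kk$ (the finitely many generators in $(-\epsilon,1-\epsilon)$ now span it as a $\kk$-vector space). Since $\dd$ is $\kk$-linear, $\check{SH}(\Sigma)$ is the homology of a finite-dimensional complex over a field, hence a finite-dimensional $\kk$-vector space of dimension at most $\dim_\kk\check{SC}(\Sigma)$.

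The only point that is more than bookkeeping or a direct citation is the compatibility of the maps $S_g$ with the limits defining $\check{SH}$: one must verify that the isomorphisms \eqref{eq_Sg_in_action_window} commute with the continuation maps used in the direct and inverse limits \eqref{eq_lim_SH_1}--\eqref{eq_lim_SH_2}, so that \eqref{eq_module_str} descends to a genuine module structure on $\check{SH}(\Sigma)$ and so that homology commutes with these limits. Once this is granted --- which it essentially is, by the discussion preceding Example~\ref{ex_simple_loop} together with \cite[Proposition~3.4]{Ciel_Fra_Morse} --- the theorem follows formally from the algebraic facts above.
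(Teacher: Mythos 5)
Your proposal follows essentially the same route as the paper: torsion-freeness from Lemma~\ref{lem_S_g^k_lin_indep}, finite generation of $\check{SC}(\Sigma)$ from Lemma~\ref{lem_SC_fin_gen}, the observation that $\ZZ_2[t,t^{-1}]$ is a PID so that kernels and quotients inherit finite generation and the structure theorem yields freeness, the rank bound passing through submodule and quotient, and the field $\kk$ handling the $I(g)=0$ case. The only cosmetic differences are that you make explicit the module-linearity of $\dd$ and the compatibility of $S_g$ with the limits, both of which the paper treats as already established in the preceding discussion.
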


\begin{rmk}
 If one prefers to work over the ring of Laurent series for the case $I(g)\neq 0$, one can define a variant of $\check{SH}(\Sigma)$, namely 
 \begin{equation*}
  \widetilde{SH}(\Sigma) \defeq \varinjlim_b \varprojlim_a \check{SH}^{(a,b)}(\Sigma).
 \end{equation*}
 The difference with \eqref{eq_lim_SH_2} is that here, we do not fix the grading before taking the limits, so we allow for any infinite sum of terms whose actions go to $-\infty$. Then, similarly to the case $I(g)=0$, $\widetilde{SH}(\Sigma)$ is a finite-dimensional vector space over $\kk = \ZZ_2((t^{-1}))$.
\end{rmk}

\subsection{Homotopy invariance} \label{sec_homot_inv}

This section and the next one are devoted to stating, proving and using the statements of Proposition~\ref{prop_less_basic} in the current setup. $\Sigma$ is assumed to be index-positive.

\begin{prop} \label{prop_homot_invar}
 Let $g_t$ and $\tilde g_t$ be homotopic through a homotopy of loops of Hamiltonian diffeomorphisms $g_{t,r}$ with $g_{0,r} = \id$ for all $r$. Then, 
 the isomorphisms
  \[
   S_g, S_{\tilde g} \colon \check{SH}(\Sigma) \stackrel{\cong}{\longrightarrow} \check{SH}(\Sigma)
  \]
  coincide.
\end{prop}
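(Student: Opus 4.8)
\emph{Plan.} I would follow Seidel's proof of the first part of Proposition~\ref{prop_less_basic}, i.e.\ \cite[Section~5]{Seidel_pi_1}, essentially line by line; the only genuinely new issue is the non-compactness of the symplectization, which is handled by the same index estimates used throughout Section~\ref{sec_def_on_symplectization}. As a preliminary remark, note that the statement is at least consistent: the Maslov index $I(g)$ depends only on the class of $g$ in $\pi_1(\Ham)$, so $I(g)=I(\tilde g)$ and $S_g,S_{\tilde g}$ are maps between the same groups.

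First I would work at the chain level in a fixed action window $(a,b)$, with a Hamiltonian $H$ as in Figure~\ref{fig:V_shaped_H} with $\mu_1,\mu_2$ large. As in Section~\ref{sec_action_symplectization}, $S_g$ is represented by the tautological bijection $\gamma\mapsto g\cdot\gamma$, $u\mapsto g\cdot u$ of Lemma~\ref{lem_g_properties}, followed by the cutoff $g_*H\mapsto[g_*H]_0$ of Proposition~\ref{prop_cutoff} and by continuation maps; likewise for $\tilde g$. Hence it suffices to exhibit a chain homotopy between these two chain maps and then pass to the limits, using \cite[Proposition~3.4]{Ciel_Fra_Morse}.

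Next I would build this chain homotopy from the homotopy $g_{r,t}$. Arrange $g_{r,t}=g_t$ for $r\le -1$ and $g_{r,t}=\tilde g_t$ for $r\ge 1$. Exactly as in \cite[Section~5]{Seidel_pi_1}, the substitution $v(s,t)=g_{s,t}\cdot u(s,t)$ turns a Floer cylinder $u$ for $(H,J)$ into a solution $v$ of an $s$-dependent inhomogeneous Floer equation whose asymptotic data are $(g_*H,g_*J)$ as $s\to-\infty$ and $(\tilde g_*H,\tilde g_*J)$ as $s\to+\infty$ (the term coming from $\dd_s g_{s,t}$ is absorbed into the Hamiltonian term, just as in the definition of $g_*H$). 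The resulting moduli spaces are in bijection with the corresponding Floer moduli spaces, as in Lemmas~\ref{lem_g_properties} and~\ref{lem_continuation_defined}; counting their zero-dimensional components defines an operator $K$ on $\check{SC}(\Sigma)$, and the standard cobordism argument applied to the one-dimensional components --- whose ends consist of Floer breakings together with the limiting configurations at $r=\pm\infty$ --- shows that $S_g$ and $S_{\tilde g}$ are chain homotopic via $K$. After the usual cutoff (Proposition~\ref{prop_cutoff}) and passage to the limits, this gives $S_g=S_{\tilde g}$ on $\check{SH}(\Sigma)$.

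The hard part is compactness of these parametrized moduli spaces in $\RR_+\times\Sigma$: a priori a sequence could slide down the negative end and break off punctured curves asymptotic to contractible Reeb orbits, as in Figures~\ref{fig:cylinder_breaking} and~\ref{fig:pants_breaking}. This is excluded by the index computation of Lemma~\ref{lem_dim_broken_curves}: the top level of such a broken configuration has virtual dimension at most (dimension of the relevant moduli space) $+\,1-\sum_j(\mu(c_j)+n-3)$, which is negative under the assumption $\mu_{CZ}(c)>4-n$ of case~(i) of Corollary~\ref{cor_defined_in_sympl}; in case~(ii) one argues instead, as in the proof of that corollary, that the moduli spaces in $\RR_+\times\Sigma$ and in the filling $W$ agree after neck-stretching, since the potential breakings have non-positive virtual dimension and are ruled out generically. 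One also has to smooth the $(s,r)$-dependent Hamiltonian $(g_{s,t})_*H$ to be constant on the negative end, and, as in the discussion following Proposition~\ref{prop_cutoff}, choose the slopes so that no generator created by this modification lies in the window $(a,b)$. Granting these points, the rest of the argument is identical to the closed case.
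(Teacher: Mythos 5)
Your plan is broadly in the spirit of the paper — reduce to compactness estimates on $\RR_+\times\Sigma$ plus a homotopy-of-homotopies argument — but the way you set up the "chain homotopy" has a genuine confusion that would need to be fixed.

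The substitution $v(s,t)=g_{s,t}\cdot u(s,t)$, with the homotopy parameter identified with the cylinder coordinate $s$, does \emph{not} produce a chain homotopy; it produces a continuation-type chain map. Concretely, the resulting $s$-dependent equation has asymptotics $(g_*H,g_*J)$ and $(\tilde g_*H,\tilde g_*J)$, and counting its zero-dimensional moduli spaces — which requires $\mu(\tilde g\cdot\gamma_+)-\mu(g\cdot\gamma_-)=0$, i.e.\ $\mu(\gamma_+)=\mu(\gamma_-)$ — gives an operator of degree $0$ (up to the common shift $2I(g)=2I(\tilde g)$). A chain homotopy between $S_g$ and $S_{\tilde g}$ must have degree $+1$. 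So your $K$ cannot be the claimed chain homotopy, and the phrase ``limiting configurations at $r=\pm\infty$'' has no meaning once you have set $r=s$: the $s=\pm\infty$ limits are the asymptotic orbits, which are part of the definition of the moduli space, not a boundary stratum. What your substitution actually shows (this is essentially the paper's Lemma~\ref{lem_continuation_defined}) is that a particular continuation map $CF(g_*H)\to CF(\tilde g_*H)$ equals the tautological identification $g\cdot\gamma\mapsto\tilde g\cdot\gamma$, i.e.\ $S_{\tilde g}\circ S_g^{-1}$. To go from this to $S_g=S_{\tilde g}$ on $\check{SH}$ you would additionally have to know that this specific continuation map is chain homotopic to the continuation maps appearing in the direct-limit system defining $\check{SH}$ — which is exactly the homotopy-of-homotopies statement your argument tacitly assumes.

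The paper's proof supplies precisely this missing piece. After first reducing to $\tilde g\equiv\id$ via the concatenation property $S_{g^1}\circ S_{g^2}=S_{g^1\# g^2}$ (a simplification you don't use), it introduces a \emph{two}-parameter family $\bar H(r,s,t,x)$ — a ``deformation of homotopies'' — with $r\in[0,1]$ kept \emph{separate} from $s$, and defines the parametrized moduli space $\mcM^h(\gamma_+,\gamma_-;\bar H,\bar J)$ of pairs $(r,u)$. Its zero-dimensional components define the degree-$+1$ operator $h_k$, its one-dimensional components provide the cobordism whose boundary gives Lemma~\ref{lem_chain_homot}, $\dd\circ h + h\circ\dd = \Phi(H')\circ S_{g_1}-\Phi(H'')$, and the boundary strata at $r=0,1$ are the two continuation maps. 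Your compactness discussion (Lemma~\ref{lem_dim_broken_curves}-type index bounds on the symplectization, plus cutting off the Hamiltonian to be constant near the negative end via Proposition~\ref{prop_cutoff}, and the use of Lemma~\ref{lem_char_H} to control the slope there) is correct and matches the paper's treatment; it is the algebraic organization of the moduli spaces that needs to be replaced by the genuine parametrized version.
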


\begin{proof}
 The proof follows the lines of \cite[Section~5]{Seidel_pi_1} and is a variation of the standard ``homotopy of homotopies'' argument, which is used in Floer homology to show that continuation maps do not depend on the chosen homotopy $(H_s, J_s)$. We omit some of the details that do not differ from the closed case. 
 
 First, note that $S_g$ satisfies the concatenation property
 \[
  S_{g^1} \circ S_{g^2} = S_{g^1 \# g^2}
 \]
 for two loops $g_t^1, g_t^2$ of Hamiltonian diffeomorphisms. Therefore, it suffices to prove the proposition in the special case $g_{t,0} = \tilde g_t \equiv \id$.
 
 Denote by $\tilde H$ a Hamiltonian as in Figure~\ref{fig:V_shaped_H} such that the slopes of $(g_r)_* \tilde H$ at infinity are steeper than those of $H$ for all $r$. Further, let $(H', J')$ be a regular homotopy from $(H,J)$ to $((g_1)_* \tilde H, (g_1)_*J)$ and $(H'', J'')$ a regular homotopy from $(H,J)$ to $(\tilde H,J)$. 

\begin{figure}[ht]
 \centering
 \def\svgwidth{0.35\textwidth}
 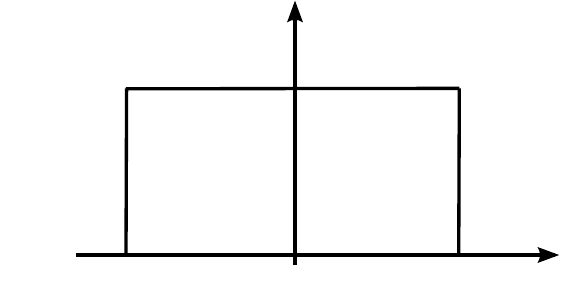
 \caption{Visualization of a deformation of homotopies}
 \label{fig:defor_homot}
\end{figure}
 
 \begin{defn}
  A \emph{deformation of homotopies} is pair of a function $\bar H\in C^\infty([0,1]\times\RR\times S^1 \times \Sigma, \RR)$ and a family of $\omega$-compatible almost complex structures $(\bar J_{r,s,t})$ parametrized by $(r,s,t)\in [0,1] \times \RR \times S^1$ such that
  \begin{align*}
   & \bar H(r,s,t,x) = H(t,x), \quad & & \bar J_{r,s,t} = J_t & & \text{for} \quad s\leq -1, \\
   & \bar H(r,s,t,x) = \left((g_r)_* \tilde H \right) (t,x), \quad & & \bar J_{r,s,t} = (g_r)_*J_t & & \text{for} \quad s\geq 1, \\
   & \bar H(0,s,t,x) = H''(s,t,x), \quad & & \bar J_{0,s,t} = J''_t & & \text{and} \\
   & \bar H(1,s,t,x) = H'(s,t,x), \quad & & \bar J_{1,s,t} = J'_t. & &
  \end{align*}
 \end{defn}

See Figure~\ref{fig:defor_homot} for a visualization. 
By Lemma~\ref{lem_char_H}, we can choose a deformation of homotopies $(\bar H, \bar J)$ such that on the negative end of the symplectization, $\bar H$ is of the form $g_*H$ for some $g$ as in \eqref{eq_Ham_loop} and $H$ constant. This makes sure that Floer cylinders for $\bar H$ do not escape to the negative end of the symplectization, as in Lemma~\ref{lem_cyl_cpt}.

For $\gamma_-$ an $H$-orbit and $\gamma_+$ an $\tilde H$-orbit, define the moduli space 
\[
 \mcM^h(\gamma_+,\gamma_-; \bar H, \bar J)
\]
as the set of pairs $(r,u)\in [0,1]\times C^\infty(\RR\times S^1, \RR_+ \times \Sigma)$ satisfying
\begin{equation} \label{eq_Floer_barH}
 \dd_s u + \bar J_{r,s,t}(u(s,t))\left(\dd_t u - X_{\bar H}(r,s,t,u(s,t))\right) = 0
\end{equation}
and the asymptotic conditions
\[
 \lim_{s\to -\infty} u(s) = \gamma_-, \quad \lim_{s\to \infty} u(s) = g_r(\gamma_+).
\]
For a sufficiently generic choice of $(\bar H, \bar J)$, this is a smooth manifold of dimension
\[
 \dim \mcM^h(\gamma_+,\gamma_-; \bar H, \bar J) = \mu(\gamma_+) - \mu(\gamma_-) +1.
\]
Its boundary consists of solutions of \eqref{eq_Floer_barH} with $r=0$ or $r=1$. In these cases, equation \eqref{eq_Floer_barH} becomes
\begin{equation}\label{eq_Floer_r0}
  \dd_s u + \bar J''_{s,t}(u(s,t))\left(\dd_t u - X_{H''}(s,t,u(s,t))\right) = 0
\end{equation}
and 
\begin{equation}\label{eq_Floer_r1}
  \dd_s u + \bar J'_{s,t}(u(s,t))\left(\dd_t u - X_{H'}(s,t,u(s,t))\right) = 0,
\end{equation}
respectively. These are precisely the equations for the continuation maps corresponding to $(H'',J'')$ and $(H', J')$ respectively. 

\begin{lem} 
 \begin{enumerate}[(i)]
  \item If $\mu(\gamma_+) = \mu(\gamma_-) -1$, the moduli space $\mcM^h(\gamma_+,\gamma_-; \bar H, \bar J)$ is a finite set.
  \item If $\mu(\gamma_+) = \mu(\gamma_-) = k$, $\dim \mcM^h(\gamma_+,\gamma_-; \bar H, \bar J) = 1$, and there is a smooth compactification $\overline \mcM^h(\gamma_+,\gamma_-; \bar H, \bar J)$ whose boundary consists, in addition to $\dd \mcM^h(\gamma_+,\gamma_-; \bar H, \bar J)$, of elements of
  \begin{equation} \label{eq_ov_Mh_1}
   \mcM^h(\gamma_+,\gamma; \bar H, \bar J) \times (\mcM(\gamma, \gamma_-; \tilde H, J) / \RR)
  \end{equation}
 for $\gamma$ an $\tilde H$-orbit of index $\mu(\gamma)=k+1$ and
  \begin{equation} \label{eq_ov_Mh_2}
   (\mcM(\gamma_+,\gamma'; H, J)/\RR) \times \mcM^h(\gamma', \gamma_-; \bar H, \bar J)
  \end{equation}
 for $\gamma'$ an $H$-orbit of index $\mu(\gamma)=k-1$.
 \end{enumerate}
 \label{lem_Mh_compact}
\end{lem}

See \cite{Seidel_pi_1} and its references for the proof of Lemma~\ref{lem_Mh_compact}. By this compactness result, it makes sense to define a map
\begin{align*}
 h_k^{\bar H, \bar J} \colon CF_k(H) & \longrightarrow CF_{k+1}(\tilde H) \\
 \gamma_+ & \longmapsto \sum_{\substack{\gamma_- \\ \mu(\gamma_-)=k+1}} \# \mcM^h(\gamma_+, \gamma_-; \bar H, \bar J) \, \gamma_-.
\end{align*}

\begin{lem} \label{lem_chain_homot}
 For all $k$, 
 \begin{equation*}
  \dd_{k+1}^{\tilde H, J} \circ h_k^{\bar H, \bar J} + h_{k-1}^{\bar H, \bar J} \circ \dd_k^{H, J} = \Phi_k^{H', J'} \circ S_{g_{t,1}} - \Phi_k^{H'', J''},
 \end{equation*}
 where $\Phi_k$ denotes the continuation map.
\end{lem}
\begin{proof}
By definition,
\begin{align*}
 \Phi_k^{H', J'} \circ S_{g_{t,1}}(\gamma_+) & = \sum_{\gamma_-} \# \left(\mcM^\Phi(g_1(\gamma_+),\gamma_-; H', J') \right) \gamma_- \\
 & \stackrel{\eqref{eq_Floer_r1}}{=} \sum_{\gamma_-} \# \left\{ (1,u)\in \mcM^h(\gamma_+,\gamma_-; \bar H, \bar J) \right\} \gamma_-.
\end{align*}
As \eqref{eq_Floer_r0} is the Floer equation for the continuation map $\Phi^{H'', J''}$, this implies
\begin{equation*}
 \left(\Phi_k^{H', J'} \circ S_{g_{t,1}} - \Phi_k^{H'', J''} \right)(\gamma_+) = \sum_{\gamma_-} \# \left(\dd \mcM^h(\gamma_+,\gamma_-; \bar H, \bar J)\right) \gamma_-.
\end{equation*}
Moreover, since $\overline \mcM^h(\gamma_+,\gamma_-; \bar H, \bar J)$ is a compact $1$-dimensional manifold with boundary, its boundary has an even number of points. Hence, for $\ZZ_2$-coefficients, we can replace $\# \left(\dd \mcM^h(\gamma_+,\gamma_-; \bar H, \bar J)\right)$ with the contributions from \eqref{eq_ov_Mh_1} and \eqref{eq_ov_Mh_2}. These equations count contributions from the composition of $h_k$ with the differential, thus giving
\begin{equation*}
 \left(\Phi_k^{H', J'} \circ S_{g_{t,1}} - \Phi_k^{H'', J''} \right)(\gamma_+) = \left(\dd_{k+1}^{\tilde H, J} \circ h_k^{\bar H, \bar J} + h_{k-1}^{\bar H, \bar J} \circ \dd_k^{H, J} \right) (\gamma_+),
\end{equation*}
which proves the lemma.
\end{proof}

The statement of Lemma~\ref{lem_chain_homot} means that $\Phi_k^{H', J'}\circ S_{g_{t,1}}$ is chain homotopic to a continuation map. Thus, up to continuation maps, $S_{g_{t,1}}$ is the identity map on Floer homology. 
\end{proof}

\subsection{Application to product computations} \label{sec_application_product}

\begin{prop} \label{prop_relation_product}
Assume that $\Sigma$ is product-index-positive.
The isomorphism $S_g \colon \check{SH}_*(\Sigma) \to \check{SH}_{*+2I(g)}(\Sigma)$ satisfies the relation
 \begin{equation} \label{eq_relation_product}
  S_g(x\cdot y) = S_g(x) \cdot y
 \end{equation}
with the product on $\check{SH}(\Sigma)$.
\end{prop}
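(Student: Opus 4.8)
The plan is to adapt Seidel's proof of Proposition~\ref{prop_less_basic}(2) to the symplectization, combining it with the cutoff arguments of Section~\ref{sec_def_on_symplectization}. The idea is that $S_g(x)\cdot y$ is computed by the same pair-of-pants moduli spaces as $x\cdot y$, except that the loop $g$ is applied near the first input $z_1$ of $\mcP$ and near the output $z_0$, while the second input $z_2$ is left untouched. To make this precise I would first build a family of Hamiltonian diffeomorphisms $G_z$ of $(\RR_+\times\Sigma, d(r\alpha))$, parametrized by $z\in\mcP$, of the form $G_z(r,x)=(r,e^{2\pi i\,\Theta(z)}.x)$ where $\Theta\colon\mcP\to\RR/\ZZ$ is chosen so that $\Theta(s,t)=\varphi(t)$ in the cylindrical coordinates near $z_1$ and near $z_0$, and $\Theta\equiv 0$ near $z_2$. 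Such a $\Theta$ exists: since the Reeb flow has period one, $G_z$ depends only on $\Theta(z)\bmod\ZZ$, and the homology relation $[\partial_0]=[\partial_1]+[\partial_2]$ among the puncture loops of $\mcP$ forces the winding numbers to balance, which here they do ($\varphi(1)$ near $z_1$, $0$ near $z_2$, $\varphi(1)$ near $z_0$).

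Next I would push forward the pair-of-pants data $(H_\mcP,J_\mcP,\beta)$ under $G$. If $u\colon\mcP\to\RR_+\times\Sigma$ solves the Floer equation~\eqref{eq_Floer_beta}, then $v(z)\defeq G_z(u(z))$ solves the analogous equation for data $(G_*H_\mcP,G_*J_\mcP)$, in which the $\mcP$-derivative of $G_z$ contributes an extra Hamiltonian-valued $1$-form, equal to $\varphi'(t)\,r\,dt$ in the ends near $z_1$ and $z_0$ and vanishing near $z_2$, exactly as in Lemma~\ref{lem_char_H} and Example~\ref{ex_simple_loop}. In the cylindrical ends this changes the asymptotic orbit $\gamma_1$ to $g\cdot\gamma_1$ and $\gamma_0$ to $g\cdot\gamma_0$, leaves $\gamma_2$ unchanged, raises the slopes at $z_1$ and $z_0$ by $\varphi(1)$, and preserves the admissibility inequality $b_0\ge b_1+b_2$. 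Since $u\mapsto G\cdot u$ is a diffeomorphism of the relevant function spaces, it identifies $\mcM^{\RR_+\times\Sigma}(\gamma_1,\gamma_2,\gamma_0;\beta,H_\mcP,J_\mcP)$ with $\mcM^{\RR_+\times\Sigma}(g\cdot\gamma_1,\gamma_2,g\cdot\gamma_0;\beta,G_*H_\mcP,G_*J_\mcP)$, preserving dimension, transversality and compactness.

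To handle the transported moduli space near the negative end of the symplectization, where $G_*H_\mcP$ is not constant, I would argue as in Proposition~\ref{prop_cutoff} and the discussion after it: cut the Hamiltonian off to a constant at a radius so large that every orbit created in the process has action outside the fixed window $(a,b)$, so that the cutoff data computes the same Floer complex and is admissible in the sense of Corollary~\ref{cor_prod_in_sympl}. Counting zero-dimensional moduli spaces then gives, on the chain level and for slopes taken large enough, the identity $\langle\gamma_1\cdot\gamma_2,\gamma_0\rangle=\langle(g\cdot\gamma_1)\cdot\gamma_2,\,g\cdot\gamma_0\rangle$, i.e.\ $S_g\circ(\,\cdot\,)$ agrees with $(\,\cdot\,)\circ(S_g\otimes\id)$ up to chain homotopy; the one-dimensional moduli spaces supply the chain homotopy, and Proposition~\ref{prop_homot_invar} ensures the induced maps on $\check{SH}$ are independent of the auxiliary choices. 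Passing to the direct limit over slopes, using that both $S_g$ (Sections~\ref{sec_def_on_symplectization} and~\ref{sec_homot_inv}) and the pair-of-pants product (\cite[Section~2.3.6]{Abouzaid}) commute with continuation maps, and extending from generators by bilinearity, yields~\eqref{eq_relation_product} on all of $\check{SH}_*(\Sigma)$.

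The main obstacle I anticipate is analytic bookkeeping rather than algebra: checking that the pushed-forward data $(G_*H_\mcP,G_*J_\mcP,\beta)$, and in particular the extra Hamiltonian-valued $1$-form generated by $dG_z$ along $\mcP$, still meet the compactness hypotheses (the analogue of~\eqref{eq_1form_compactness} together with the SFT-like and convexity conditions near infinity) after the negative-end cutoff, and that product-index-positivity still excludes the breaking of Figure~\ref{fig:pants_breaking} for the relabelled orbits $g\cdot\gamma_1$ and $g\cdot\gamma_0$. Both are routine in spirit but require re-running the index estimates of Lemma~\ref{lem_pop_cpt} with the Conley--Zehnder indices shifted by $2I(g)$.
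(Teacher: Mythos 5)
Your proposal is correct and follows essentially the same Seidel/Ritter-style argument that the paper uses: apply the Hamiltonian loop along the pair-of-pants near the output and one input while leaving the other input fixed, identify the resulting moduli spaces by $u\mapsto G\cdot u$, and use the cutoff argument of Proposition~\ref{prop_cutoff} together with the index-positivity estimates to control the negative end of the symplectization before passing to direct limits. The paper's only shortcut is that instead of explicitly constructing the interpolating function $\Theta$ on $\mcP$, it first invokes Proposition~\ref{prop_homot_invar} to homotope $g_t$ so that $g_t=\id$ for $t$ near $0$ and then takes $\mcP=\RR\times S^1\setminus\{(0,0)\}$, so that applying $g_t$ globally along the $S^1$-coordinate automatically acts trivially at the interior puncture and yields the same bijection of moduli spaces you describe.
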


\begin{proof}
 Having established Proposition~\ref{prop_homot_invar}, the proof is essentially the same as in \cite[Theorem~23]{Ritter_neg_line} and \cite[Proposition~6.3]{Seidel_pi_1}. 
 Namely, by Proposition~\ref{prop_homot_invar}, we can homotope $g_t$ to another loop of Hamiltonian diffeomorphisms satisfying $g_t = \id$ for $t\in (-\epsilon, \epsilon)$ for some $0<\epsilon<1/4$. 
 
 For the domain of the pair-of-pants, we take the specific surface $\RR\times S^1\setminus \{(0,0)\}$. Choose a cylindrical parametrization $(s,t)$ near $\{0,0\}$, e.g.\ 
 \[
  e(s,t) = \left(\frac{1}{4}e^{-2\pi s}\cos (2\pi t), \frac{1}{4}e^{-2\pi s}\sin (2\pi t) \right)
 \]
 with $s\in (-\infty, 0)$. Let $\gamma_+, \gamma_0, \gamma_-$ be $1$-periodic orbits of $H_+, H_0, H_-$, respectively, and choose $\beta, H_\mcP$ and $J_\mcP$ as in Section~\ref{sec_product_def}. 
Then, the product counts maps
 \[
  u \colon \RR\times S^1 \to \RR_+ \times \Sigma
 \]
satisfying
\[
 (du - X_H\otimes \beta)^{0,1} = 0,
\]
with the asymptotic conditions
\[
 \lim_{s\to \pm\infty} u(s,t) = \gamma_\pm(t),
\]
at the punctures $\pm \infty$ and
\[
 \lim_{s\to \infty} u\circ e(s,t) = \gamma_0(t)
\]
at the puncture $(0,0)$. Since $g_t=\id$ in a neighborhood of $t=0$, we note that $g\cdot u$ satisfies the asymptotic conditions
\[
 \lim_{s\to \pm\infty} (g\cdot u)(s,t) = (g\cdot \gamma_\pm)(t) \quad \text{and} \quad \lim_{s\to \infty} (g\cdot u)\circ e(s,t) = \gamma_0(t).
\]
Hence, the assignment $u\mapsto g\cdot u$ gives a bijection of moduli spaces
\[
 \mcM(\gamma_+, \gamma_0, \gamma_-; \beta, H_\mcP, J_\mcP) \cong \mcM(g\cdot \gamma_+, \gamma_0, g\cdot \gamma_-; \beta, g_*H_\mcP, g_*J_\mcP).
\]
By an analog of Proposition~\ref{prop_cutoff} for pairs-of-pants (which holds by the same proof), the moduli space on the right-hand side does not change if we cut off $g_*H_\mcP$ to a constant near the negative end of the symplectization. Therefore, the elements of the right-hand side are counted by the product 
\[
HF_*(g_*H_+)\times HF_*(H_0) \to HF_*(g_*H_-)
\]
of the elements $S_g(\gamma_+) = g\cdot \gamma_+$, $\gamma_0$ and $S_g(\gamma_-) = g\cdot \gamma_-$,
while the elements of the left-hand side are counted by the product
\[
HF_*(H_+)\times HF_*(H_0) \to HF_*(H_-)
\]
of the elements $\gamma_+$, $\gamma_0$ and $\gamma_-$.
Hence, when taking direct limits to pass to $\check{SH_*}(\RR_+ \times \Sigma)$, this bijection of moduli spaces gives 
\[
 \langle S_g(\gamma_+) \cdot \gamma_0, \: S_g(\gamma_-) \rangle = \langle \gamma_+ \cdot \gamma_0, \: \gamma_- \rangle.
\]
Since the right-hand side is the same as $\langle S_g(\gamma_+ \cdot \gamma_0), S_g(\gamma_-) \rangle$, this implies \eqref{eq_relation_product}.
\end{proof}

As mentioned in Section~\ref{sec_V_shaped}, the ring structure on $\check{SH}(\Sigma)$ has a unit, coming from the generator of $H^0(\Sigma)$. Hence, we can use \eqref{eq_relation_product} with $x=1$ being the unit and $y=\gamma$ some other generator, getting
\begin{equation} \label{eq_Sg_prod}
 S_g(\gamma) = S_g(1\cdot \gamma) = S_g(1)\cdot \gamma.
\end{equation}
Specifically, choose $g_t$ to be the simple loop \eqref{eq_simple_loop} from Example~\ref{ex_simple_loop}. For this case, define 
\[
 s \defeq S_g(1) \in \check{SH}_{n+2I(g)}(\Sigma). 
\]
Hence,
\begin{equation} \label{eq_Sg_prod_s}
 S_g(\gamma) = s \cdot \gamma,
\end{equation}
and similarly $S_g^{-1}(\gamma) = s^{-1}\cdot \gamma$, where $s^{-1}$ is the inverse of $s$ in the ring $\check{SH}(\Sigma)$.

\begin{cor} \label{cor_S_g_is_mult}
 The isomorphism $S_g$ is simply (left-) multiplication by the element $s \in \check{SH}(\Sigma)$. 
 In particular, the structure of $\check{SH}(\Sigma)$ as a module over the ring of Laurent polynomials from \eqref{eq_module_str} is given by\footnote{We renamed the variable of the Laurent polynomials from $t$ to $s$ to emphasize that $s$ is itself an element and $\ZZ_2[s,s^{-1}]$ is a subset of $\check{SH}(\Sigma)$.}
 \begin{equation*}
 \ZZ_2[s,s^{-1}] \times \check{SH}(\Sigma) \to \check{SH}(\Sigma), \qquad (s^k, \gamma) \mapsto s^k \cdot \gamma.
 \end{equation*}

\end{cor}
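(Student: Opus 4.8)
The plan is to read the corollary off from Proposition~\ref{prop_relation_product} and the existence of a unit, essentially by assembling \eqref{eq_Sg_prod_s} and iterating it. Throughout this section $\Sigma$ is product-index-positive, so by Corollary~\ref{cor_prod_in_sympl} the product on $\check{SH}(\Sigma)$ is defined on the symplectization and Proposition~\ref{prop_relation_product} applies. First I would apply \eqref{eq_relation_product} to the simple loop $g_t$ of \eqref{eq_simple_loop} with $x=1$ the image of the generator of $H^0(\Sigma)$, obtaining at once $S_g(\gamma)=S_g(1\cdot\gamma)=S_g(1)\cdot\gamma=s\cdot\gamma$; this is the first assertion, that $S_g$ is left-multiplication by $s$. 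One small point to record here: the map $S_g$ built in Example~\ref{ex_simple_loop} from the cut-off Hamiltonians $[g_*H]_0$ (and used to set up the module structure \eqref{eq_module_str}) must be identified with the map $S_g$ appearing in Proposition~\ref{prop_relation_product}, where $g$ was first deformed to equal the identity near $t=0$; this identification is precisely the homotopy invariance of Proposition~\ref{prop_homot_invar}.

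Next I would check that $s$ is invertible in $\check{SH}(\Sigma)$, so that $t^k\mapsto s^k$ makes sense and extends to a unital ring homomorphism $\ZZ_2[t,t^{-1}]\to\check{SH}(\Sigma)$. Since $S_g$ is an isomorphism with inverse $S_g^{-1}=S_{g^{-1}}$ by Corollary~\ref{cor_S_g_iso}, the element $s^{-1}\defeq S_g^{-1}(1)$ satisfies $s\cdot s^{-1}=S_g(s^{-1})=S_g(S_g^{-1}(1))=1$, and by commutativity of the product this is a genuine two-sided inverse. Iterating \eqref{eq_relation_product} together with associativity then gives $S_g^k(\gamma)=s^k\cdot\gamma$ for all $k\ge 0$, and the analogous identity with $s^{-1}$ handles $k<0$. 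Hence the module multiplication $(t^k,\gamma)\mapsto S_g^k(\gamma)$ of \eqref{eq_module_str} is literally $(s^k,\gamma)\mapsto s^k\cdot\gamma$, and renaming the variable $t$ to $s$ identifies the Laurent polynomial ring with the subring $\ZZ_2[s,s^{-1}]\subset\check{SH}(\Sigma)$ generated by $s$ and $s^{-1}$ (the injectivity of $k\mapsto s^k\cdot 1=S_g^k(1)$ being Lemma~\ref{lem_S_g^k_lin_indep}).

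Since all the genuine analytic input — compactness of the pair-of-pants moduli spaces on the symplectization and the bijection $u\mapsto g\cdot u$ — has already been absorbed into Propositions~\ref{prop_homot_invar} and~\ref{prop_relation_product}, this argument is purely formal and I do not expect a substantial obstacle. The only steps meriting attention are the compatibility of the two descriptions of $S_g$ noted above and, if one wants to be thorough, the remark that the unit survives all the direct and inverse limits used to construct $\check{SH}(\Sigma)$ (it does, being the class of the generator of $H^0(\Sigma)$, which lies in the action window $(-\epsilon,\epsilon)$ and is preserved by continuation maps).
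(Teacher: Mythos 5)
Your argument is correct and is essentially the paper's own: apply \eqref{eq_relation_product} with $x=1$ to get \eqref{eq_Sg_prod_s}, define $s \defeq S_g(1)$, and observe that $s^{-1}\defeq S_g^{-1}(1)$ is a genuine ring inverse so that iteration yields the $\ZZ_2[s,s^{-1}]$-module structure. Your additional remarks — reconciling the two constructions of $S_g$ via Proposition~\ref{prop_homot_invar}, and verifying $s\cdot s^{-1}=1$ by $S_g(1)\cdot s^{-1}=S_g(S_g^{-1}(1))=1$ — are details the paper leaves implicit but that fit the same line of reasoning.
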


While the proof given above, specifically Proposition~\ref{prop_relation_product}, was given under the assumption that $\Sigma$ is product-index-positive, it turns out that, at least if $I(g)\neq 0$, a weaker assumption suffices:

\begin{prop} \label{prop_weaker_assumption}
Assume that $\Sigma$ is simply-connected,\footnote{Again, the assumption $\pi_1(\Sigma)=0$ is used only to have a grading of $\check{SH}$ compatible with the product structure and the broken curve in Figure~\ref{fig:pants_breaking}, see Remark~\ref{rmk_grading}.} admits a Liouville filling $W$ with $c_1(W)=0$ and fulfills $\mu_{CZ}(c) > 3-n$ for all Reeb orbits $c$ (i.e.\ it fulfills condition (i) in the definition of index-positivity). Assume further that $I(g)\neq 0$ (for $g$ as in \eqref{eq_simple_loop}). Then, although one needs the filling $W$ to the define the product structure, equation \eqref{eq_Sg_prod_s} (and hence Corollary~\ref{cor_S_g_is_mult}) holds as before.
\end{prop}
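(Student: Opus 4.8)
The plan is to run the proof of Proposition~\ref{prop_relation_product} again, with the single change that the pair-of-pants product now lives on the completed filling $\widehat W$ (using \cite{Ciel_Oan}, which needs only $c_1(W)=0$), and that the bijection $u\mapsto g\cdot u$ of pair-of-pants moduli spaces — which a priori only makes sense on $\RR_+\times\Sigma$, where $g$ is defined — is matched to the product on $\widehat W$ by a neck-stretching comparison. It is enough to prove $S_g(\gamma)=s\cdot\gamma$ for all $\gamma$; since both $S_g$ and left multiplication by $s=S_g(1)$ are $\ZZ_2$-linear and, by Theorem~\ref{thm_free_module}, $\check{SH}(\Sigma)$ is finitely generated over $\ZZ_2[s,s^{-1}]$ for the $S_g$-action, it suffices to treat the finitely many module generators $\gamma_i$ in the window $(-\epsilon,1-\epsilon)$ together with the classes $S_g^k(\gamma_i)$. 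Replacing $g$ by $g^{-1}$ if necessary, we may assume $I(g)>0$, so that $s$ has degree $\mu(s)=n+2I(g)>n$.

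Concretely, I would homotope $g$ so that $g_t=\mathrm{id}$ near $t=0$ and, as in Proposition~\ref{prop_relation_product}, use the pair-of-pants domain $\RR\times S^1\setminus\{(0,0)\}$ with the puncture $(0,0)$ carrying $\gamma_i$ (or $S_g^k(\gamma_i)$), the puncture $+\infty$ carrying a chain-level representative of the unit (a minimum of a small Morse function on $\{1\}\times\Sigma\subset\widehat W$, hence an orbit of degree $n$ and action $\approx 0$), and the puncture $-\infty$ the output. Applying $g$ moves the $+\infty$-input from $1$ to $S_g(1)=s$ and the output from $\gamma_-$ to $S_g(\gamma_-)$ while fixing the $(0,0)$-input; this gives a bijection between the pair-of-pants moduli spaces with inputs $(1,\gamma_i)$ and with inputs $(s,\gamma_i)$ on $\RR_+\times\Sigma$, and by an analogue of Proposition~\ref{prop_cutoff} the latter compute the product $s\cdot\gamma_i$ on $\RR_+\times\Sigma$. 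Since $1\cdot\gamma_i=\gamma_i$ in homology, this yields $s\cdot\gamma_i=S_g(\gamma_i)$ — provided both product counts, a priori defined on $\widehat W$, are correctly computed on $\RR_+\times\Sigma$.

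That proviso is where the filling and the index hypotheses do their work. As in the proof of Corollary~\ref{cor_defined_in_sympl}(ii), one neck-stretches near $\partial W$: a pair-of-pants in $\widehat W$ that leaves $\RR_{\ge 1}\times\Sigma$ degenerates, and the only dangerous limit is the breaking of Figure~\ref{fig:pants_breaking}. By Lemma~\ref{lem_dim_broken_curves} together with the dimension-zero condition \eqref{eq_mod_prod_dim_zero}, such a breaking produces Reeb orbits $c_1,c_2$ (contractible in $W$) with inequalities of the shape $\mu(c_1)\le 3-\mu(\gamma_2)$ and $\mu(c_2)\le 3+\mu(\gamma_2)$, where $\gamma_2\in\{\gamma_i,\,1,\,s\}$ is whichever input lies on the leg not carrying the output. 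For the leg carrying $1$ this forces $\mu(c)\le 3-n$ (resp.\ $\le 3-\mu(s)<3-n$ for the leg carrying $s$), contradicting index-positivity $\mu_{CZ}(c)>3-n$; and since $\mcA(c)>0$ for every Reeb orbit while $\mcA(1)\approx 0$ and $\mcA(s)\approx 1$, the action filtration already excludes the breakings in which a short orbit splits off such a leg. For the remaining breaking, where the leg carrying $\gamma_i$ (or $S_g^k(\gamma_i)$) splits off, one uses $\mu_{CZ}(c)>3-n$ again to force $|\mu(\gamma_2)|<n$; the finitely many generators with $|\mu(\gamma_i)|<n$ are handled by first passing to $S_g^k(\gamma_i)$ with $|k|$ large — here $I(g)\ne 0$ ensures $|\mu(S_g^k(\gamma_i))|\ge n$ and $\mcA(S_g^k(\gamma_i))$ arbitrarily negative — and then propagating the identity $S_g=(s\cdot-)$ back to small $k$, using that on classes represented by sums of generators of degree of absolute value $\ge n$ multiplication by $s$ is already known to agree with $S_g$, hence commutes with it.

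The main obstacle is exactly this neck-stretching/compactness step: excluding Figure~\ref{fig:pants_breaking} without assuming product-index-positivity \eqref{eq_assumption_prod_all}. What makes it go through is that the two product computations we actually need, $1\cdot\gamma$ and $s\cdot\gamma$, each have one input ($1$, resp.\ $s$) of extreme index and action, while — thanks to $I(g)\ne 0$ — the second input can also be pushed to extreme index and action; the bookkeeping to be done carefully is (i) checking that every Reeb orbit appearing in a hypothetical breaking is subject to one of these constraints, and (ii) verifying compatibility of the whole argument with the direct and inverse limits of \eqref{eq_lim_SH_1}--\eqref{eq_lim_SH_2}, which is routine given Lemma~\ref{lem_continuation_defined} and Proposition~\ref{prop_cutoff}.
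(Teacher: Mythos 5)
Your proposal identifies the right leverage: the unit has degree $n$, the element $s$ has degree $n+2I(g) > n$ (after replacing $g$ by $g^{-1}$ if necessary), and $I(g)\neq 0$ lets you push $|\mu(S_g^k(\gamma))|$ above $n$ by taking $|k|$ large, so that Lemma~\ref{lem_pop_cpt} applies to the relevant pair-of-pants moduli spaces. This matches the paper's opening move. However, the final ``propagating the identity $S_g=(s\cdot-)$ back to small $k$'' step is not actually carried out, and as stated it does not go through. Knowing $S_g^{k+1}(\gamma) = s\cdot S_g^k(\gamma)$ for all $|k|$ large does not by itself yield $S_g(\gamma)=s\cdot\gamma$; to undo the shift you would need to know that $S_g^{-k}$ is multiplication by $s^{-k}$, which is essentially what is being proved. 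The remark that ``multiplication by $s$ \ldots commutes with $S_g$'' on high-degree classes is a tautology and does not close this loop.

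The paper fills this gap with a three-step bootstrap, each step invoking $S_g(x\cdot y)=S_g(x)\cdot y$ only for inputs of absolute degree at least $n$. First, induction gives $S_g^k(1)=s^k$ for $k\ge 0$, since $1$, $s^k$, and $S_g(1)=s$ all have degree $\ge n$. Second, $s^N$ is shown to be invertible with $(s^N)^{-1}=S_{g^{-N}}(1)$ for $N$ large: here $\mu(S_{g^{-N}}(1))=n-2NI(g)\le -n$, so the relation applies and $S_{g^{-N}}(1)\cdot s^N = S_{g^{-N}}(1\cdot s^N) = S_g^{-N}(s^N)=1$. Third, for arbitrary $\gamma$, pick $N$ large enough that $\mu((s^N)^{-1}\cdot\gamma)<-n$ and compute $S_g(\gamma)=S_g\bigl(s^N\cdot(s^N)^{-1}\cdot\gamma\bigr)=S_g(s^N)\cdot\bigl((s^N)^{-1}\cdot\gamma\bigr)=s^{N+1}(s^N)^{-1}\gamma=s\cdot\gamma$. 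This is the precise algebraic mechanism your ``propagation'' needs; without the invertibility of $s^N$ established in the second step, the argument does not descend to low-degree generators.
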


\begin{proof}
 As $\Sigma$ is index-positive, both $S_g$ and $s\defeq S_g(1)$ are still well-defined. By Lemma~\ref{lem_pop_cpt}, the product $\gamma_1\cdot \gamma_2$ can be computed in the symplectization if \eqref{eq_assumption_prod} holds. As $\mu(c)>3-n$ for all Reeb orbits $c$, this is guaranteed if $|\mu(\gamma_1)|\geq n$ and $|\mu(\gamma_2)|\geq n$.
 
 Therefore, the proof of \eqref{eq_relation_product} goes through as before if 
 \[
  |\mu(x)|\geq n, \quad |\mu(y)|\geq n \quad \text{ and } \quad |\mu(S_g(x))|\geq n.
 \]
 Recall that the unit has degree $n$, so we can use it for $x$ or $y$. Without loss of generality, assume that $I(g)>0$ (otherwise replace $g$ by its inverse). Then, $\mu(s^k)\geq n$ for all $k\geq 0$, so we can use \eqref{eq_relation_product} inductively to get
\begin{equation} \label{eq_rel_prod_k_pos}
 S_g^k(1) = s^k \qquad \forall k\geq 0.
\end{equation}
 The next step is to see that $s^N$ is invertible, at least for $N$ sufficiently large. Denote by $g^{-N}$ the $(-N)$-fold cover of $g$ and define $x\defeq S_{g^{-N}}(1)$. For $N$ sufficiently large, $\mu(x) \leq -n$, so we can use \eqref{eq_relation_product} to get
 \[
  x\cdot s^N = S_{g^{-N}}(1) \cdot s^N = S_{g^{-N}}(1 \cdot s^N) = S_g^{-N}(s^N) = 1,
 \]
 where the last step follows from \eqref{eq_rel_prod_k_pos}. Hence, $x = (s^N)^{-1}$. Now, for any generator $\gamma\in \check{SH}(\Sigma)$, choose $N$ sufficiently large so that $\mu((s^N)^{-1} \cdot \gamma)<-n$. Then, we can calculate
 \begin{align*}
  S_g(\gamma) & = S_g \left(s^N\cdot (s^N)^{-1} \cdot \gamma \right) \\
  & = S_g(s^N) \cdot \left( (s^N)^{-1} \cdot \gamma \right) \\
  & \stackrel{\eqref{eq_rel_prod_k_pos}}{=} s^{N+1} \cdot (s^N)^{-1} \cdot \gamma \\
  & = s\cdot \gamma
 \end{align*}
 which finishes the proof. 
\end{proof}

The following theorem summarizes the results of this section:

\begin{thm} \label{thm_summary_RFH}
Assume that $\Sigma$ has periodic Reeb flow and satisfies one on the following:
\begin{itemize}
 \item $\Sigma$ is product-index-positive, or 
 \item $\Sigma$ fulfills $\pi_1(\Sigma)=0$, $\mu_{CZ}(c) > 3-n$ for all Reeb orbits $c$ and admits a Liouville filling $W$ with $c_1(W)=0$.
\end{itemize}
 Let $g_t$ be defined as in \eqref{eq_simple_loop} and assume $I(g)\neq 0$. Then, the multiplication
\begin{equation*}
 \ZZ_2[s,s^{-1}] \times \check{SH}(\Sigma) \to \check{SH}(\Sigma), \qquad (s^k,x) \mapsto s^k\cdot x,
\end{equation*}
where $s=S_g(1)$ and $\cdot$ denotes the pair-of-pants product, gives $\check{SH}(\Sigma)\cong RFH(W)$ the structure of a free and finitely generated module over $\ZZ_2[s,s^{-1}]$. The generators of this module are the unit and possibly other finite linear combinations of Reeb orbits.
In particular, $\check{SH}(\Sigma)$ is finitely generated as an algebra.

If $I(g)=0$ and $\Sigma$ is product-index-positive, the same holds true if we replace $\ZZ_2[s,s^{-1}]$ by $\ZZ_2((s^{-1}))$.
\end{thm}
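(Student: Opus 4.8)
The plan is to assemble Theorem~\ref{thm_summary_RFH} from the pieces already established in this section, splitting into the two bullet-point hypotheses and the two cases $I(g)\neq 0$ and $I(g)=0$. First I would invoke Corollary~\ref{cor_defined_in_sympl} (together with Proposition~\ref{prop_intro_indep}, identifying $\check{SH}(\Sigma)$ with $RFH(W)$ via \cite[Theorem~1.5]{Ciel_Fra_Oan}) to guarantee that $\check{SH}(\Sigma)$ is defined independently of the filling under either hypothesis, and to note that in the second case the product structure still makes sense using $W$. Next, Example~\ref{ex_simple_loop} and Theorem~\ref{thm_free_module} give, for $I(g)\neq 0$, that $\check{SH}(\Sigma)$ is a free and finitely generated module over $\ZZ_2[t,t^{-1}]$ with the abstract module structure \eqref{eq_module_str}, with the number of generators bounded by the number of generators of $\check{SC}^{(-\epsilon,1-\epsilon)}$, i.e.\ the constant orbits (giving $H^*(\Sigma)$) together with the finitely many Reeb orbits of length $<1$.

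The essential content beyond Theorem~\ref{thm_free_module} is to identify this abstract action of $t^k$ with the pair-of-pants product by $s^k$, where $s\defeq S_g(1)$. For the product-index-positive case this is exactly Corollary~\ref{cor_S_g_is_mult}, obtained from Proposition~\ref{prop_relation_product} (relation \eqref{eq_relation_product}) specialised at $x=1$. For the second hypothesis ($\pi_1(\Sigma)=0$, $\mu_{CZ}(c)>3-n$, Liouville filling with $c_1(W)=0$, and $I(g)\neq 0$), I would instead cite Proposition~\ref{prop_weaker_assumption}, which gives \eqref{eq_Sg_prod_s} under precisely these weaker index assumptions by the inductive invertibility-of-$s^N$ argument. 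In both cases, renaming the Laurent variable from $t$ to $s$ as in the footnote to Corollary~\ref{cor_S_g_is_mult} turns \eqref{eq_module_str} into the multiplication map in the statement, and freeness plus finite generation over $\ZZ_2[s,s^{-1}]$ transfer verbatim. Since $\check{SH}(\Sigma)$ is then a finitely generated module over the finitely generated ring $\ZZ_2[s,s^{-1}]$, it is finitely generated as a $\ZZ_2$-algebra. For the identification of the module generators: the bound from Theorem~\ref{thm_free_module} is realised by the unit (the maximum on $\mcN_0\cong\Sigma$, by \eqref{eq_MB_s_equals}) together with the classes coming from Reeb orbits of length $<1$ and the remaining cohomology classes of $\Sigma$, all of which are finite linear combinations of Reeb orbits (including constant ones).

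For the final sentence, the case $I(g)=0$ under product-index-positivity: Theorem~\ref{thm_free_module} already gives that $\check{SH}(\Sigma)$ is a finite-dimensional vector space over $\kk=\ZZ_2[s][[s^{-1}]]$, and Corollary~\ref{cor_S_g_is_mult} (valid in the product-index-positive case regardless of $I(g)$) identifies the $S_g$-action with multiplication by $s$; so the multiplication map with $\ZZ_2[s,s^{-1}]$ replaced by $\ZZ_2[s][[s^{-1}]]$ has the same properties, finishing the proof.

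I expect the write-up to be essentially a citation-and-bookkeeping exercise; the only subtle point is making sure the two hypotheses are routed to the correct earlier result (Corollary~\ref{cor_S_g_is_mult} versus Proposition~\ref{prop_weaker_assumption}) and that, in the second hypothesis, the product is understood as defined via $W$ while the module structure and the identification $s^k\cdot x = S_g^k(x)$ are still intrinsic — this matching of "defined-via-filling" with "intrinsic answer" is the one place where care is needed.
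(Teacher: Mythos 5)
Your proposal is correct and follows exactly the paper's route: the theorem is stated as a summary, obtained by combining the abstract module result (Theorem~\ref{thm_free_module}) with the identification of the module action as multiplication by $s=S_g(1)$ via Corollary~\ref{cor_S_g_is_mult} in the product-index-positive case and Proposition~\ref{prop_weaker_assumption} in the weaker-hypothesis case, and the $I(g)=0$ statement from the $\kk=\ZZ_2[s][[s^{-1}]]$ part of Theorem~\ref{thm_free_module}. The routing of the two hypotheses to the correct earlier propositions and the observation that the product must be defined via $W$ in the second case are exactly the points the paper implicitly relies on.
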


This theorem also includes the (uninteresting) case when $\check{SH}(\Sigma)=0$, as e.g.\ for the standard contact sphere. Note that by \cite[Theorem~13.3]{Ritter_tqft}, $\check{SH}(\Sigma)\cong RFH(W; \ZZ_2)\neq 0$ is equivalent to $SH(W;\ZZ_2)\neq 0$.

Unfortunately, this theorem does not necessarily give the complete product structure of $\check{SH}(\Sigma)$. Indeed, the module generators might not be algebraically independent (one might be the product of two others), or even the generator $s$ might be the square (or some higher power) of some other generator.

\subsection{Back to usual symplectic homology} \label{sec_back_to_usual_SH}

Finally, we can use Theorem~\ref{thm_summary_RFH} to gain some information about the usual symplectic homology of some Liouville filling $W$ of $\Sigma$ with $c_1(W)=0$. The long exact sequence constructed in \cite{Ciel_Fra_Oan} gives in particular a map
\begin{equation} \label{eq_map_SH_to_RFH}
 f\colon SH_*(W) \to \check{SH}_*(\Sigma).
\end{equation}
This map is constructed as follows: The Floer homology of a Hamiltonian on $W$ as in Figure~\ref{fig:V_shaped_H} with the action window $(-\infty, b)$ is isomorphic to $SH^{(-\infty,b)}(W)$. The Floer homology $HF^{(a,b)}(H)$ used in the definition of $\check{SH}$ arises from dividing out the chains of action less than $a$ (provided that $\mu_1$ is sufficiently large). Thus, the map \eqref{eq_map_SH_to_RFH} is just the quotient map $HF^{(-\infty,b)}(H) \to HF^{(a,b)}(H)$ after taking the appropriate limits.

The next lemma is a special case of \cite[Theorem~10.2(e)]{Ciel_Oan}.

\begin{lem} \label{lem_f_intertwines_product}
 The maps $f$ respects the product structures,
\begin{equation*}
 f(x\cdot y) = f(x) \cdot f(y).
\end{equation*}
\end{lem}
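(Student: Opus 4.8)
The plan is to unwind the definition of $f$ as the quotient map $HF^{(-\infty,b)}(H)\to HF^{(a,b)}(H)$ (after taking the appropriate limits) and show that this quotient map intertwines the pair-of-pants products at the chain level, where both products are counted by the same moduli spaces of pairs-of-pants. First I would recall the chain-level setup: on $\widehat W$ the product on $SH$ is defined by counting elements of $\mcM(\gamma_1,\gamma_2,\gamma_0;\beta,H_\mcP,J_\mcP)$ as in Section~\ref{sec_product_def}, with Hamiltonians $H_1,H_2$ of slopes $b_1,b_2$ and output Hamiltonian $H_0$ of slope $b_0\ge b_1+b_2$. The same moduli spaces, used on a Hamiltonian of the $\bigvee$-shape from Figure~\ref{fig:V_shaped_H}, define the product on $\check{SH}$; the point is that the product is defined before passing to either of the action-truncated homologies, so it is literally the same count of the same curves.

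The key step is the compatibility of these counts with action filtration. By Remark~\ref{rmk_action_filtr}, if one uses the variant where $H_1,H_2,H_0$ are positive multiples of a common $H$, the product satisfies $\mcA_{H_0}(\gamma_1\cdot\gamma_2)\le \mcA_{H_1}(\gamma_1)+\mcA_{H_2}(\gamma_2)$, and in particular the subcomplexes generated by orbits of action $<a$ are sent into subcomplexes of action $<2a$ (or more precisely the relevant thresholds), so the product descends to the truncations $HF^{(a,b)}$ used in the definition of $\check{SH}$. Concretely, I would fix an action window, choose the auxiliary constants $\mu_1,\mu_2,\delta,\varepsilon$ large/small enough that the generators with action in that window are exactly of types (III)--(V) as in Section~\ref{sec_V_shaped}, and observe that the quotient map from $HF^{(-\infty,b)}$ to $HF^{(a,b)}$ is a chain map that commutes with the pair-of-pants operation because the operation is defined on the full complex and descends to both quotients. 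Passing to the direct/inverse limits \eqref{eq_lim_SH_1}, \eqref{eq_lim_SH_2} (which by \cite[Proposition~3.4]{Ciel_Fra_Morse} commute with homology), the identity $f(x\cdot y)=f(x)\cdot f(y)$ follows, and the unit is preserved because both units come from $H^0$ of the respective spaces and $f$ restricted to $SH^-\cong H^*(W)$ realizes the restriction map $H^*(W)\to H^*(\Sigma)$.

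Alternatively, and perhaps more cleanly, I would simply cite \cite[Theorem~10.2(e)]{Ciel_Oan}, of which this lemma is a special case, and include the short argument above only as a sketch of why the statement is true in our setting: the map $f$ is a morphism of the underlying TQFT structures because it is induced by a map of Floer complexes that is compatible with the operations coming from counting genus-zero curves with cylindrical ends. The main obstacle is purely bookkeeping: one must check that the choices of Floer data (the Hamiltonian $H_\mcP$, one-form $\beta$, and $J_\mcP$) on the pair-of-pants can be made simultaneously compatible with the $\bigvee$-shaped profiles and with the slope inequality \eqref{eq_1form_compactness}, and that the neck-stretching / SFT-compactness arguments of Section~\ref{sec_def_on_symplectization} do not interfere — but since here we are working on the filling $W$ (not the symplectization), the usual compactness on $\widehat W$ applies and no new analysis is needed. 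I expect the write-up to be short, essentially a pointer to \cite{Ciel_Oan} together with the filtration remark.
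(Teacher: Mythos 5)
Your proposal is correct and follows essentially the same route as the paper: identify $f$ with the quotient map $HF^{(-\infty,b)}(H)\to HF^{(a,b)}(H)$ (after the appropriate limits), observe via the action-filtration property of the pair-of-pants product that the same truncated product $HF^{[a,b)}(H)\times HF^{[a,b)}(H)\to HF^{[a+b,2b)}(2H)$ defines both the product on $\check{SH}$ and that on $SH(W)$ for elements surviving the quotient, and then pass to limits. The paper's write-up is even terser than yours (it does not explicitly invoke Remark~\ref{rmk_action_filtr} or mention the unit), but the underlying argument and the pointer to \cite[Theorem~10.2(e)]{Ciel_Oan} are the same.
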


\begin{proof}
 The product on $\check{SH}$ is constructed by applying the limits \eqref{eq_lim_SH_1} and \eqref{eq_lim_SH_2} (in the correct order) to the product
\begin{equation} \label{eq_product_def_on_check_SH}
  HF^{[a, b)}(H) \times HF^{[a, b)}(H) \to HF^{[a+b, 2b)}(2H).
\end{equation}
But \eqref{eq_product_def_on_check_SH} also defines the product on $SH(W)$ of any elements that survive the quotient map $HF^{(-\infty,b)}(H) \to HF^{(a,b)}(H)$.
\end{proof}

One should think of the map $f$ as dividing out a part of the negative symplectic homology $SH_*^-(W)\cong H^{n-*}(W)$. This can be seen most easily from the long exact sequence
\begin{equation} \label{eq_les_SH_RFH}
 \cdots \longrightarrow SH^{-k} \stackrel{h}{\longrightarrow} SH_k \stackrel{f}{\longrightarrow} \check{SH}_k \longrightarrow SH^{-(k-1)} \longrightarrow \cdots
\end{equation}
where the map $h\colon SH^{-k} \to SH_k$ factors by \cite[Proposition~1.3]{Ciel_Fra_Oan} as
\begin{equation} \label{eq_h_factoring}
 SH^{-k}(W) \to H^{-k+n}(W,\dd W) \stackrel{PD}{\longrightarrow} H_{k+n}(W) \stackrel{\incl_*}{\longrightarrow} H_{k+n}(W,\dd W) \to SH_k(W).
\end{equation}
By exactness, the induced map $\bar f\colon SH(W) / \im(h) \to \check{SH}(\Sigma)$ is injective, and $\im(h)$ is a subset of the image of $SH^-(W) \to SH(W)$.

Furthermore, for reasons similar to Lemma~\ref{lem_f_intertwines_product}, $f$ maps the unit of $SH$ to the unit of $\check{SH}$. Indeed, both units have the same definition in terms of orbits of $H$, and it can be checked from \eqref{eq_h_factoring} that $\im(h)$ has no elements of degree $n$. Hence, the generators defining the unit are not divided out by $f$.

\begin{cor}
 $SH(W)/\im(h)$ is a commutative ring with unit.
\end{cor}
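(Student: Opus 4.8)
The plan is to read this off formally from two ingredients already in place: the exactness of the long exact sequence \eqref{eq_les_SH_RFH} and the multiplicativity of $f$ established in Lemma~\ref{lem_f_intertwines_product} (together with the remark that $f$ carries the unit of $SH(W)$ to the unit of $\check{SH}(\Sigma)$).

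First I would note that exactness of \eqref{eq_les_SH_RFH} at the term $SH_k$ identifies $\im(h)$ with $\ker\bigl(f\colon SH(W)\to\check{SH}(\Sigma)\bigr)$; thus $SH(W)/\im(h) = SH(W)/\ker(f)$, and the induced map $\bar f\colon SH(W)/\ker(f)\to\check{SH}(\Sigma)$ is injective. Since, by Lemma~\ref{lem_f_intertwines_product}, $f$ is a homomorphism of (graded-)commutative rings, its kernel is an ideal of $SH(W)$. Hence $SH(W)/\ker(f)$ inherits a ring structure from $SH(W)$ for which the quotient projection is a ring homomorphism; commutativity descends from $SH(W)$ — and over $\ZZ_2$ the grading sign is invisible, so graded-commutativity is plain commutativity — while the class of $1\in SH(W)$ serves as a unit for the quotient.

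It remains only to confirm that this unit is nonzero, equivalently that $\im(h)$ is a proper ideal. The unit of $SH(W)$ lies in degree $n$, whereas by the factorization \eqref{eq_h_factoring} the map $h$ factors in each degree $k$ through $H^{-k+n}(W,\dd W)$; for $k=n$ this group is $H^{0}(W,\dd W)$, which vanishes because $W$ is connected with nonempty boundary. Hence $\im(h)$ contains nothing in degree $n$, so $1\notin\im(h)$, and in particular $\bar f$ sends the unit of $SH(W)/\im(h)$ to the unit of $\check{SH}(\Sigma)$.

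I do not expect a genuine obstacle here: the entire content sits in the already-proven Lemma~\ref{lem_f_intertwines_product} and in the exactness of \eqref{eq_les_SH_RFH}, after which the statement is the elementary fact that the quotient of a commutative ring by the kernel of a ring homomorphism is again a unital commutative ring. The only points calling for a little care are bookkeeping ones: that the pair-of-pants product has degree $n$, so the unit sits in degree $n$ rather than in degree $0$, and the small cohomological computation showing that $\im(h)$ has no degree-$n$ part.
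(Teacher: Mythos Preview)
Your argument is correct and follows essentially the same route as the paper: identify $\im(h)$ with $\ker(f)$ via exactness of \eqref{eq_les_SH_RFH}, invoke Lemma~\ref{lem_f_intertwines_product} to see that this is an ideal, and conclude that the quotient is a commutative unital ring. The paper's proof is terser because the observation that $\im(h)$ contains nothing in degree $n$ (hence the unit survives) is stated in the paragraph immediately preceding the corollary rather than inside the proof itself, but the content is the same as your final paragraph.
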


\begin{proof}
 As the kernel of the ring homomorphism $f$, $\im(h)\subset SH(W)$ is an ideal, hence the quotient is a ring. 
\end{proof}

\begin{thm}
 For $\Sigma$ and $W$ as in Theorem~\ref{thm_summary_RFH}, $SH(W) / \im(h)$ is a free and finitely generated module over the polynomial ring $\ZZ_2[s]$. In particular, $SH(W)$ is finitely generated as a $\ZZ_2$-algebra.
\end{thm}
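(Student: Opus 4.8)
The plan is to identify $A:=\im(f)\subseteq\check{SH}(\Sigma)$ as the ``non-negative powers of $s$'' part of $\check{SH}(\Sigma)$, to show this submodule is free and finitely generated over $\ZZ_2[s]$, and then to transport the conclusion back through the isomorphism $\bar f$ and across the finite-dimensional kernel $\im(h)$. By Lemma~\ref{lem_f_intertwines_product} and exactness of \eqref{eq_les_SH_RFH}, $f$ is a unital ring homomorphism with $\ker(f)=\im(h)$, so $\bar f\colon SH(W)/\im(h)\to A$ is an isomorphism of rings; in particular $1\in A$. Moreover $\im(h)$ is finite-dimensional over $\ZZ_2$: by \eqref{eq_h_factoring} it is contained in the image of $H_{*+n}(W,\dd W)\to SH_*(W)$, which is finite-dimensional since $W$ is compact. (If $\check{SH}(\Sigma)=0$ the statement is empty, so assume $\check{SH}(\Sigma)\neq 0$.)

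First I would show $s\in A$. By \eqref{eq_MB_s_equals}, $s=S_g(1)$ is represented in the Morse--Bott chain complex for $\check{SH}(\Sigma)$ by the maximum on the critical manifold $\mcN_1$ of principal Reeb orbits (a positively parametrised orbit, of action close to $1$); since $S_g$ is a chain isomorphism and the unit is represented by the maximum on $\mcN_0$, the generator $[\mcN_1,\max]$ is a cycle. The same generator occurs in the $\bigvee$-shaped complex on $\widehat W$ computing $SH(W)$, and by the moduli-space identification behind Corollary~\ref{cor_defined_in_sympl} -- which matches the index-one Floer cylinders in $\RR_+\times\Sigma$ out of a Reeb-orbit generator with those in $\widehat W$ -- the differential of $[\mcN_1,\max]$ is the same in both complexes. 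Hence $[\mcN_1,\max]$ is a cycle for $SH(W)$ as well, representing a class $\tilde s$ with $f(\tilde s)=s$. It follows that $A$ is a $\ZZ_2[s]$-submodule of $\check{SH}(\Sigma)$ (for $x\in SH(W)$ we have $s\cdot f(x)=f(\tilde s\,x)\in A$) and that $\bar f$ becomes $\ZZ_2[s]$-linear once we identify $\ZZ_2[s]$ with $\ZZ_2[\bar s]\subseteq SH(W)/\im(h)$, where $\bar s:=\bar f^{-1}(s)$; this subring is genuinely a polynomial ring because $s$ is transcendental over $\ZZ_2$ in $\check{SH}(\Sigma)$ by Lemma~\ref{lem_S_g^k_lin_indep}.

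Next I would show $A$ is finitely generated over $\ZZ_2[s]$. Since $\ZZ_2[s]$ is a principal ideal domain, hence Noetherian, it suffices to exhibit a finitely generated $\ZZ_2[s]$-submodule of $\check{SH}(\Sigma)$ containing $A$. Representing a class of $SH(W)$ by a cycle in the $\bigvee$-shaped complex on $\widehat W$, its image under $f$ lies in the span of the constant orbits of type (IV), the positively parametrised Reeb orbits of type (V), and the finitely many negatively parametrised orbits of type (III) present for a fixed large slope. The type (III) and (IV) generators span a finite-dimensional space, and the type (V) generators span a finitely generated $\ZZ_2[s]$-module, because $S_g=s\cdot{}$ sends the critical manifold $\mcN_T$ to $\mcN_{T+1}$ and the Reeb spectrum is discrete, so every $\mcN_T$ with $T>0$ is $s^k\cdot\mcN_{T'}$ for some $k\ge 0$ and some $T'$ in the finite set $\Spec(\Sigma,\alpha)\cap(0,1]$. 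The span of these three families is an $s$-stable, finitely generated $\ZZ_2[s]$-submodule of $\check{SH}(\Sigma)$, so the image inside it of the cycles of the $SH(W)$-complex is finitely generated, and $A$, being a quotient of that image, is finitely generated over $\ZZ_2[s]$. Making this step rigorous -- pinning $\im(f)$ down precisely enough to see it is exactly the non-negative-powers-of-$s$ part -- is the main obstacle; it rests on the $\widehat W$-versus-$\RR_+\times\Sigma$ comparison of Section~\ref{sec_def_on_symplectization} and on discreteness of the Reeb spectrum.

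Finally, $A$ is a finitely generated, torsion-free module over the principal ideal domain $\ZZ_2[s]$ -- torsion-free because $\check{SH}(\Sigma)$ is torsion-free over $\ZZ_2[s,s^{-1}]\supseteq\ZZ_2[s]$ by Lemma~\ref{lem_S_g^k_lin_indep} -- hence free, by the structure theorem (\cite[Theorem~9.3]{Rotman}). Via $\bar f$ this gives that $SH(W)/\im(h)$ is free and finitely generated over $\ZZ_2[s]=\ZZ_2[\bar s]$. The last assertion then follows: $SH(W)/\im(h)$ is a finitely generated $\ZZ_2$-algebra (generated by $\bar s$ together with a module basis), and since $\im(h)$ is finite-dimensional, lifting these algebra generators and adjoining a $\ZZ_2$-basis of $\im(h)$ exhibits $SH(W)$ itself as a finitely generated $\ZZ_2$-algebra.
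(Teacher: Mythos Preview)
Your approach is essentially the same as the paper's: both identify $A=\im(f)$ with the part of $\check{SH}(\Sigma)$ represented by generators of non-negative action (regions (IV) and (V)), show this is a finitely generated torsion-free $\ZZ_2[s]$-module, and invoke the structure theorem over the PID $\ZZ_2[s]$.

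The differences are in execution. The paper is more direct: it reuses the $\ZZ_2[s,s^{-1}]$-module generators $g_0,\ldots,g_M$ of $\check{SH}(\Sigma)$ already constructed for Theorem~\ref{thm_summary_RFH}, chosen as linear combinations of chain generators in the action window $(-\epsilon,1-\epsilon)$ (hence lying in regions (IV)/(V), hence in $\im(f)$), and simply observes that negative powers of $s$ force a class into region (III) and out of $\im(f)$; so $g_0,\ldots,g_M$ generate $\im(f)$ over $\ZZ_2[s]$. You instead first establish $s\in A$ by hand and then run a Noetherian argument, exhibiting a finitely generated $\ZZ_2[s]$-object containing $A$. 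Your route works, but be careful: the ``span of these three families'' you describe lives in the chain module $\check{SC}$, not in $\check{SH}$, so the Noetherian step should be applied at chain level and $A$ recovered as a subquotient --- your sentence calling it ``a finitely generated $\ZZ_2[s]$-submodule of $\check{SH}(\Sigma)$'' is imprecise. Also, once one accepts the paper's characterisation of $\im(f)$ via regions (IV)/(V), your inclusion of type (III) generators is unnecessary. On the plus side, you are more careful than the paper about the ``in particular'' clause: you explicitly note that $\im(h)$ is finite-dimensional and spell out how to obtain a finite generating set for $SH(W)$ as a $\ZZ_2$-algebra by lifting module generators and adjoining a $\ZZ_2$-basis of $\im(h)$.
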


\begin{proof}
It follows from the construction of the map $f$ in \cite{Ciel_Fra_Oan} that the image $\im(f)$ consists of all elements of $\check{SH}(\Sigma)$ that are represented by orbits in the regions (IV) and (V) of Figure~\ref{fig:V_shaped_H}. Thus, we can choose generators $b_1, \ldots, b_m$ of $\check{SH}(\Sigma)$ over $\ZZ_2[s,s^{-1}]$ (resp.\ $\ZZ_2((s^{-1}))$ if $I(g)=0$) that are contained in $\im(f)$. (Namely, for $I(g)\neq 0$, any element is represented by a finite sum of orbits, so it suffices to apply $S_g$ to each $b_i$ sufficiently many times. For $I(g)=0$, the same can be done after choosing the generators such that they are represented by finite sums of orbits, e.g.\ choosing the generators as in the proof of Lemma~\ref{lem_SH_fin_gen_Ig_eq_0}.)

Then, apply $S_g^{-1}$ to each $b_i$ until the result $\tilde b_i$ is still in $\im(f)$, but one further application $S_g^{-1}(\tilde b_i)$ is not. Hence, $\tilde b_1, \ldots, \tilde b_m$ generate $\im(f) \cong SH(W) / \im(h)$ as a module over $\ZZ_2[s]$. 
Since $\check{SH}(\Sigma)$ is torsion-free, this module is also torsion-free, hence it is free by \cite[Theorem~9.3]{Rotman} (as $\ZZ_2[s]$ is a principal ideal domain).
\end{proof}

\begin{rmk}
 There is no obvious $\ZZ_2[s]$-module structure on the full $SH(W)$. One possible definition would be to use a non-canonical isomorphism
 \[
  SH(W) \cong \im(h) \oplus SH(W)/\im(h)
 \]
 and extend the module structure from $SH(W)/\im(h)$ to $SH(W)$, e.g.\ by $(s^k,x)\mapsto 0$ for $x\in\im(h)$. However, any such module cannot be torsion-free, simply because in many examples (e.g.\ many Brieskorn manifolds)
 \[
  \dim_{\ZZ_2}(SH_0(W)) > \dim_{\ZZ_2}(SH_{2I(g)}(W)).
 \]
\end{rmk}

\section{Examples: Brieskorn manifolds} \label{sec_Brieskorn_examples}

\subsection{General observations}

To better understand the structure of $\check{SH}(\Sigma)$ in view of concrete examples, let us use a suitable chain complex.
To a specific Hamiltonian $H_{\mu_1, \mu_2}$ as in Figure\ \ref{fig:V_shaped_H} (which only depends on the radial coordinate $r$), we can associate a Morse--Bott chain complex $CF_*^{(a,b)}(H_{\mu_1, \mu_2})$.
In such a complex, the chains are given by pairs $(\mcN_T, \eta)$, where 
\[
 \mcN_T\defeq \{z\in\Sigma \mid \phi_T(z)=z \}
\] 
($\phi_t$ denoting the Reeb flow) is the critical submanifold of $\Sigma$ consisting of periodic Reeb orbits of length $T$, and $\eta$ is a critical point of a Morse function on $\mcN_T$. 
While this complex only contains Reeb orbits of length between $a$ and $b$, these bounds can be chosen arbitrarily large, so this does not really restrict computations in specific examples.

As the Reeb flow on $\Sigma$ is periodic with period normalized to one, we get $\mcN_T\cong \mcN_{T+1}$. Applying the map $S_g$ on this chain complex, we see that the whole critical submanifold $\mcN_T$ gets mapped to $\mcN_{T+1}$. Moreover, if we choose the same Morse functions on $\mcN_T$ and $\mcN_{T+1}$, each generator from $\mcN_T$ gets mapped under $S_g$ to the corresponding generator on $\mcN_{T+1}$, i.e.\ 
\[
 (\mcN_T, \eta) \stackrel{S_g}{\longmapsto} (\mcN_{T+1}, \eta),
\]
although we should be aware that the left- and right hand side belong to different chain complexes, as the Hamiltonian has also changed under $S_g$. Still, in homology and after taking the limits from \eqref{eq_lim_SH_1} and \eqref{eq_lim_SH_2}, we get
\begin{equation*}
 S_g\left([\mcN_T, \eta]\right) = [\mcN_{T+1}, \eta].
\end{equation*}
Together with \eqref{eq_Sg_prod_s}, this gives
\begin{equation} \label{eq_MB_prod_s}
 s \cdot [\mcN_T, \eta] = S_g([\mcN_T, \eta]) = [\mcN_{T+1}, \eta]
\end{equation}
(and similarly for homology classes that are represented by a finite sum of chains).
As the unit of $\check{SH}(\Sigma)$ (which corresponds to the unit of $H^*(\Sigma)$ under the isomorphism $\check{SH}^{(-\epsilon,\epsilon)}\cong H^*(\Sigma)$) is given by the maximum\footnote{Whether it is the minimum or the maximum is a matter of convention.}
on $\mcN_0 \cong \Sigma$, equation~\eqref{eq_MB_prod_s} says in particular that
\begin{equation} \label{eq_MB_s_equals}
 s = S_g([\mcN_0, \max]) = [\mcN_1, \max].
\end{equation}

\subsection{Recollections on Brieskorn manifolds}

For a Brieskorn manifold 
\[
 \Sigma = \Sigma(a_0, \ldots, a_n) \defeq \{z\in \CC^{n+1} \mid z_0^{a_0} + \cdots + z_n^{a_n} = 0, \|z\|=1 \}
\]
with the canonical contact structure $\xi = \ker(\alpha)$, the Reeb flow is given by
\[
 \phi_t(z) = \left(e^{4it/a_0}z_0, \ldots, e^{4it/a_n}z_n\right),
\]
so it is periodic with period $T_P \defeq \lcm_j(a_j) \cdot \frac{\pi}{2}$. So we define the $S^1$-action of Hamiltonian diffeomorphisms on $\Sigma\times\RR$ as
\[
 g_t(z,r) \defeq (\phi_{t\cdot T_P}(z),r) = \left(\left(e^{2\pi it L_P/a_0}z_0, \ldots, e^{2\pi it L_P/a_n}z_n\right), r\right),
\]
where we abbreviated $L_P \defeq \lcm_j(a_j)$. 
To compute the Maslov index $I(g)$, first note that the linearization 
\[
dg_t \colon T(\RR_+\times \Sigma) \to T(\RR_+\times \Sigma)
\]
is the identity on $\Span(R_\alpha, \dd_r)$. Hence, we can use a trivialization of the bundle $\xi$ instead of $T(\RR_+\times \Sigma)$.
Also, the Maslov index is additive under direct sums, so we can use the decomposition $T_z\CC^{n+1} = \xi \oplus \xi^\omega$ of the ambient tangent space $T_z\CC^{n+1}$ of a point $z\in \Sigma$ into the contact distribution $\xi$ and its symplectic complement $\xi^\omega$. 
With the obvious extension of the Reeb flow to $\CC^{n+1}$, the linearization $dg_t$ on the ambient tangent space is given by
\[
 dg_t = \left(\diag(e^{2\pi i L_P t/a_0}, \ldots, e^{2\pi i L_P t/a_0}), \id \right).
\]
So its determinant is $\det (dg_t) =  e^{2\pi i L_P \sum_j 1/a_j}$ and the degree is $L_P \cdot \sum_j \frac{1}{a_j}$.
On $\xi^\omega$, one can find a suitable basis (see e.g.\ \cite[Section~5.3]{KvK}) in which $dg_t$ is given by
\[
 dg_t|_{\xi^\omega} = 
 \begin{pmatrix}
  e^{2\pi i t L_P} & 0 \\ 0 & 1
 \end{pmatrix},
\]
so the degree is $L_P$. By taking the difference, we see that
\begin{equation} \label{eq_Maslov_Brieskorn}
 I(g) = L_P \cdot \left(\sum_{j=0}^n \frac{1}{a_j} - 1\right).
\end{equation}

\subsection{Computing the degrees}

As a consistency check, let us verify that all the degrees in $\ZZ_2[s,s^{-1}]$ actually appear in the chain complex.
We use the grading by the ``product degree''
\[
 \mu_\product = \mu - n,
\]
which is preserved by the product. 
In this grading, the generator $s$ has degree $2I(g)$. So the degrees appearing in $\check{SH}(\Sigma)$ are a finite collection of integers, together with all shifts by multiples of $2I(g)$.

By \eqref{eq_MB_s_equals}, $s = [\mcN_{T_P}, \max]$, i.e.\ the maximum of a Morse function on the critical submanifold $\mcN_{T_P}\cong \Sigma$.
To see that the degrees coincide, we compute
\begin{align}
 \mu_P & \defeq \mu_\product([\mcN_{T_P}, \max]) = \mu_\RS(\mcN_{T_P}) + \dim(N_{T_P}) - \frac{1}{2} (\dim(\mcN_{T_P})-1) - n \nonumber \\
 &= \sum_{j=0}^n \left(\floor*{\frac{L_P}{a_j}}+\ceil*{\frac{L_P}{a_j}}\right) - 2L_P + (2n-1) - (n-1) - n \nonumber \\
 &= 2 \sum_{j=0}^n \frac{L_P}{a_j} - 2 L_P \nonumber \\
 &= 2 L_P \left(\sum_{j=0}^n \frac{1}{a_j} - 1\right) \nonumber \\
 &= 2 I(g), \nonumber
\end{align}
which, as expected, equals the degree of $s$.

Furthermore, let $[\mcN_T, \eta]$ be any generator of $SH$, i.e.\ $\eta$ is a critical point of a Morse function on $\mcN_T$. As $\mcN_{T+T_P}\cong \mcN_T$, we can use the same Morse function on $\mcN_{T+T_P}$ and get a corresponding generator $[\mcN_{T+T_P},\eta]$. According to \eqref{eq_MB_prod_s}, the degrees of $[\mcN_{T+T_P},\eta]$ and $s\cdot [\mcN_T, \eta]$ should match, i.e.\ 
\begin{equation} \label{eq_degree_check}
 \mu_\product ([\mcN_{T+T_P}, \eta]) = \mu_\product([\mcN_T, \eta]) + \mu_P.
\end{equation}
To see this, note that the period of any Reeb orbit of $\Sigma$ is a multiple of $\frac{\pi}{2}$, so we can write $T = L\cdot \frac{\pi}{2}$. Then, we can compute
\begin{align*}
 \mu_\RS(\mcN_{T+T_P}) &= \sum_{j=0}^n \left(\floor*{\frac{L+L_P}{a_j}}+\ceil*{\frac{L+L_P}{a_j}}\right) - 2(L+L_P) \\
 &= \sum_{j=0}^n \left(\floor*{\frac{L}{a_j}}+\ceil*{\frac{L}{a_j}}\right) - 2L + 2 \sum_{j=0}^n \frac{L_P}{a_j} - 2L_P \\
 &= \mu_\RS(\mcN_T) + \mu_P.
\end{align*}
The other terms in the degree formula are the same for $[\mcN_T,\eta]$ and $[\mcN_{T+T_P},\eta]$, thus \eqref{eq_degree_check} is verified.

\begin{example}
 In \cite{Uebele}, symplectic homology was computed for the specific example 
 \[
  \Sigma_\ell \defeq \Sigma(2\ell,2,2,2), \quad \ell\geq 1
 \]
 (and more generally $\Sigma(2\ell, 2,\ldots, 2)$ for $n\geq 3$ odd). While the focus in \cite{Uebele} was on positive symplectic homology $SH^+$, the same methods work for computing $\check{SH}(\Sigma_\ell)$.
 The result can be stated as
 \begin{equation*}
   \check{SH}(\Sigma_\ell) \cong
 \left\{
 \begin{aligned}
  &\ZZ_2 & \qquad & \mbox{if } k=(2\ell+2)N+j \mbox{ for any }N\in\ZZ, j\in\{-1,0,1,2\} \\
  &(\ZZ_2)^2 & \qquad & \mbox{else.}
  \end{aligned}
 \right.
 \end{equation*}
 Note also that $\Sigma_\ell$ is index-positive, hence Theorem~\ref{thm_summary_RFH} can be applied. 
 The index shift is 
 \[
  2I(g) = 4\ell\cdot \left(\frac{1}{2\ell} + \frac{3}{2} -1 \right) = 2\ell + 2,
 \]
 which matches the periodicity of $\check{SH}(\Sigma_\ell)$. Thus, counting the number of generators in one period, we see that $\check{SH}(\Sigma_\ell)$ is a $\ZZ_2[s,s^{-1}]$-module of dimension
 \[
  \dim_{\ZZ_2[s,s^{-1}]} \left( \check{SH}(\Sigma_\ell) \right) = 4\ell.
 \]
\end{example}

\begin{rmk}
 It is tempting to think that this dimension (or the degree of the principal orbit) can distinguish the contact structures of Brieskorn manifolds with different exponents. After all, by Corollary~\ref{cor_prod_in_sympl}, $\check{SH}$ and its product structure depend only on the contact manifold $\Sigma$ (at least under the assumption that $\Sigma$ is product-index-positive, but by Proposition~\ref{prop_weaker_assumption}, the statements about the module structure hold more generally). In this way, one might for instance try to distinguish the contact structures on $\Sigma(\ell p, p, 2,2)$ for fixed $p\in\NN$ and different values of $\ell$, see \cite[Section~3.6]{Uebele}.
 
 However, there is a fundamental difficulty: Since the principal orbit might be itself a power of another generator, the module structure is not uniquely determined. Hence, to distinguish the contact manifolds $\Sigma$ and $\Sigma'$ whose principal orbits have degrees $\mu_P$ and $\mu_P'$, respectively, one would have to exclude the possibility that $\check{SH}(\Sigma)$ is a free module over the Laurent polynomials in a variable $s$ whose degree is a common divisor of $\mu_P$ and $\mu_P'$ (e.g.\ by seeing that $\check{SH}(\Sigma)$ does not have this periodicity). For the example $\Sigma(\ell p, p, 2,2)$, this is probably not possible without explicitly computing some differentials. 
\end{rmk}

\subsection{Comparison with known examples} \label{sec_comparison}

\subsubsection{Cotangent bundles of spheres}

The $(2n-1)$-dimensional Brieskorn manifold $\Sigma(2,\ldots, 2)$ is contactomorphic to the unit cotangent bundle $S^*S^n$ of $S^n$, and its standard filling $W$ is symplectomorphic to $D^*S^n$. Hence, by a famous theorem first proven by Viterbo \cite{Viterbo_part_2}, its symplectic homology is isomorphic to the homology of the free loop space $L S^n$ of $S^n$,
\begin{equation} \label{eq_SH_cotangent_bundle}
 SH_*(D^*S^n; \ZZ) \cong H_*(L S^n; \ZZ).
\end{equation}
Moreover, by \cite{Abb_Schwarz}, the pair-of-pants product on $SH_*(D^*S^n)$ corresponds to the Chas--Sullivan product on $H_*(L S^n)$.
(Note that since $S^n$ is spin, a later correction to this theorem from \cite{Kragh} does not apply here.) 
The right-hand side of \eqref{eq_SH_cotangent_bundle} was computed in \cite{CJY}. Making the degree shift
\[
 \HH_*(L M; \ZZ) \defeq H_{*+n}(L M; \ZZ)
\]
in order for the product to have degree zero, their results can be stated as follows. For $n$ even, 
\begin{equation} \label{eq_string_top_even}
 \HH_*(L S^n; \ZZ) = \Lambda[b] \otimes \ZZ[a,v] / (a^2, ab, 2av),
\end{equation}
where $\Lambda[b]$ denotes the exterior algebra and the degrees of the variables are $|b|=-1$, $|a|=-n$ and $|v|=2n-2$. For $n>1$ odd,
\begin{equation} \label{eq_string_top_odd}
 \HH_*(L S^n; \ZZ) = \Lambda[a] \otimes \ZZ[u],
\end{equation}
where $|a|=-n$ and $|u|=n-1$. 
However, if we take $\ZZ_2$-coefficients, it follows easily from the proof given in \cite{CJY} that for any $n\geq 0$ (even or odd),
\begin{equation} \label{eq_string_top_Z2}
 \HH_*(L S^n; \ZZ_2) = \ZZ_2[a,u] / (a^2),
\end{equation}
with $|a|=-n$ and $|u|=n-1$. 

To compare with Theorem~\ref{thm_summary_RFH}, we need to apply the map $f$ from \eqref{eq_map_SH_to_RFH}. 

\begin{claim} \label{claim_h_inj}
 For $(W,\Sigma) = (D^*S^n, S^*S^n)$ and with $\ZZ_2$-coefficients, the map $f\colon SH(W) \to \check{SH}(\Sigma)$ is injective.
\end{claim}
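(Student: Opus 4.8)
The claim follows once I understand the exact sequence \eqref{eq_les_SH_RFH} in the case of the cotangent bundle, so the plan is to show that the map $h\colon SH^{-k}(W)\to SH_k(W)$ vanishes identically, which by exactness forces $f$ to be injective. First I would recall that for $(W,\Sigma)=(D^*S^n,S^*S^n)$ one has $SH^-_*(W)\cong H^{n-*}(W)\cong H^{n-*}(S^n)$ (the filling retracts onto the zero-section), so $SH^-$ is concentrated in the two degrees $*=n$ and $*=0$ (or just $*=n$ when $n=0$, but we are interested in $n\geq 1$). Correspondingly the source $SH^{-k}$ of $h$ is nonzero only for $k=-n$ and $k=0$, i.e.\ $h$ can only be nontrivial as a map $SH^{0}(W)\to SH_{0}(W)$ or $SH^{-(-n)}(W)=SH^{n}(W)\to SH_{-n}(W)$.

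The heart of the argument is to use the explicit factorization \eqref{eq_h_factoring},
\[
 SH^{-k}(W)\to H^{-k+n}(W,\dd W)\xrightarrow{PD} H_{k+n}(W)\xrightarrow{\incl_*} H_{k+n}(W,\dd W)\to SH_k(W),
\]
and observe that the middle arrow $\incl_*\colon H_{k+n}(W)\to H_{k+n}(W,\dd W)$ is zero in exactly the relevant degrees. Since $W\simeq S^n$ and $\dd W=S^*S^n$, the long exact sequence of the pair $(W,\dd W)$, together with the Gysin sequence for $S^*S^n\to S^n$, shows that $\incl_*\colon H_j(D^*S^n)\to H_j(D^*S^n,S^*S^n)$ vanishes for $j=n$ and is injective only when $j=2n$ (the fundamental class) — and $j=2n$ never occurs here because $k+n=2n$ would force $k=n$, i.e.\ the source $SH^{-n}(W)=SH^-_n(W)$, and one checks $SH^-_n(W)\cong H^0(W)=\ZZ_2$ but the composite still dies because the first map $SH^{-n}(W)\to H^{0}(W,\dd W)=H_{2n}(W,\dd W)$ lands in... — here I need to be careful. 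The cleanest route is: $H_{k+n}(W)\cong H_{k+n}(S^n)$ is nonzero only for $k+n\in\{0,n\}$, i.e.\ $k\in\{-n,0\}$; for $k=0$ we get $H_n(W)=\ZZ_2$ mapping into $H_n(W,\dd W)$, and this map is zero by the exact sequence of the pair since $H_n(\dd W)=H_n(S^*S^n)\twoheadrightarrow H_n(W)$ (the zero-section generates $H_n$ and lifts to $\dd W$ as the fibre class image, or more simply $H_n(S^*S^n)\to H_n(S^n)$ is surjective for $n\geq 2$ and for $n=1$ one argues directly); for $k=-n$ we get $H_0(W)=\ZZ_2\to H_0(W,\dd W)=0$, which is trivially zero. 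Hence $\incl_*=0$ in all degrees where the composite could be nonzero, so $h=0$, and therefore $f$ is injective.

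The main obstacle I anticipate is the bookkeeping around the low-dimensional and parity-dependent cases, in particular $n=1$ and the degree $k=0$ case where $H_n(W)\neq 0$: one must genuinely verify that the zero-section class dies under $\incl_*$ (equivalently, that the Euler class of $S^*S^n$ is $2$-torsion, which over $\ZZ_2$ means the Gysin map $H_{n-1}(S^*S^n)\to H_{n-1}(S^n)$ behaves as expected). Over $\ZZ_2$ this is in fact automatic because the Euler class of an odd sphere bundle is $2$-torsion and of an even one is the mod-$2$ Euler characteristic times the generator; in either case the relevant piece of the Gysin sequence gives $H_n(S^*S^n)\twoheadrightarrow H_n(S^n)$, so $\incl_*$ on $H_n$ vanishes. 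An alternative, perhaps more conceptual, argument that avoids this case analysis entirely is to invoke \eqref{eq_string_top_Z2}: compute $\dim_{\ZZ_2}\check{SH}_*(S^*S^n)$ from the Morse--Bott chain complex (or from Rabinowitz--Floer homology of $S^*S^n$, which is known), compare with $\dim_{\ZZ_2} SH_*(W)$ and with $\dim_{\ZZ_2}SH^-_*(W)$ in each degree, and read off from \eqref{eq_les_SH_RFH} that the ranks force $h=0$; I would use whichever of the two routes turns out to be cleaner once the explicit dimensions are written down.
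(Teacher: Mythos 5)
Your argument is correct and follows the same overall strategy as the paper: reduce via exactness of \eqref{eq_les_SH_RFH} to showing $h=0$, then via the factorization \eqref{eq_h_factoring} to showing that $\incl_*\colon H_j(D^*S^n)\to H_j(D^*S^n,S^*S^n)$ vanishes in every degree where the source is nonzero, i.e.\ for $j=0$ (trivial since the target vanishes) and $j=n$. The only substantive difference is how you handle $j=n$. You read the segment $H_n(\partial W)\to H_n(W)\to H_n(W,\partial W)$ of the long exact sequence of the pair and argue that the first arrow --- which is $\pi_*$ for the bundle $S^*S^n\to S^n$ --- is surjective, invoking the Gysin sequence and the mod-$2$ vanishing of the Euler class of $TS^n$. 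The paper instead reads the adjacent segment $H_n(W,\partial W)\to H_{n-1}(\partial W)\to H_{n-1}(W)=0$: since the connecting map is onto and both $H_n(W,\partial W)$ and $H_{n-1}(S^*S^n)$ are $\ZZ_2$, it is injective, hence $\incl_*$ is zero. The paper's version is marginally more economical since it sidesteps the Gysin/Euler-class discussion, though both routes require a small amount of input about $H_*(S^*S^n;\ZZ_2)$. One minor infelicity in your write-up: the parenthetical that the zero-section class ``lifts to $\partial W$ as the fibre class image'' is not correct as stated --- the zero-section does not lie in $S^*S^n$, and the fibre class lives in degree $n-1$ --- but you then discard it in favour of the Gysin argument, which is fine; and your restriction to $n\geq 2$ matches the range ($n\geq 3$) in which the paper actually applies the claim.
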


\begin{proof}
 By exactness of the sequence \eqref{eq_les_SH_RFH}, it suffices to show that the map $h$ from \eqref{eq_h_factoring} vanishes. 
 For this, in turn, it suffices to show that the map 
 \[
  \incl_* \colon H_k(D^*S^n) \longrightarrow H_k(D^*S^n, S^*S^n)
 \]
 vanishes in all degrees. As $D^*S^n\simeq S^n$, $H_k(D^*S^n)$ vanishes for $k\neq 0,n$, and $H_k(D^*S^n, S^*S^n)$ vanishes for $k=0$. Thus, the only non-trivial degree is $k=n$, for which it follows from the long exact sequence of the pair $(D^*S^n, S^*S^n)$ with $\ZZ_2$-coefficients
 \[
  \cdots \longrightarrow H_n(D^*S^n) \longrightarrow H_n(D^*S^n, S^*S^n) \longrightarrow H_{n-1}(S^*S^n) \longrightarrow H_{n-1}(D^*S^n) \longrightarrow \cdots
 \]
and $H_{n-1}(S^*S^n) \cong \ZZ_2$, $H_{n-1}(D^*S^n)=0$ that
 \[
  \incl_* \colon H_n(D^*S^n)\cong \ZZ_2 \longrightarrow H_n(D^*S^n, S^*S^n)\cong \ZZ_2
 \]
 is the zero map.
\end{proof}

\begin{rmk}
 For $n$ odd, Claim~\ref{claim_h_inj} is also true for $\ZZ$-coefficients, while for $n$ even, the last step in the proof only works over $\ZZ_2$.
\end{rmk}

Now, we compare with $\check{SH}(\Sigma(2,\ldots, 2))$. Note that all critical manifolds are of the form $\mcN_{N\pi}$ for $N\in \ZZ$, hence they are diffeomorphic to $\Sigma = \Sigma(2,\ldots,2)$. The degree of a generator $[\mcN_{N\pi}, \eta]$, in the product grading, can be computed as
\begin{align*}
 \mu_\product([\mcN_{N\pi}, \eta]) &= \mu_\RS(\mcN_{N\pi}) - \frac{1}{2} (\dim(\Sigma) -1) + \ind_\Morse(\eta) - n \\
 &= \sum_{j=0}^n \left(\floor{N} + \ceil{N} \right) - 4N - (n-1) + \ind_\Morse(\eta) - n \\
 &= 2N (n-1) -2n +1 + \ind_\Morse(\eta),
\end{align*}
and if we choose a perfect Morse function on $\Sigma\cong S^*S^n$, $\ind_\Morse(\eta) \in\{0,n-1,n,2n-1\}$.
Also note that all generators  with $N>0$, corresponding to positively oriented Reeb orbits, have Conley--Zehnder index at least $n-1$, from which it follows that $\Sigma$ is index-positive for $n\geq 3$.
As for the differential, it turns out that, at least for $n\geq 3$, all differentials of this chain complex vanish. For $n\geq 4$, this follows immediately for degree and action reasons, while for $n=3$, it is a special case of the computations done in \cite{Uebele}.

Hence, as a $\ZZ_2$-vector space, the $\bigvee$-shaped symplectic homology of $\Sigma$ is given by
\begin{equation} \label{eq_SH_for_Sigma_222}
 \check{SH}(\Sigma) \cong 
  \left\{
 \begin{aligned}
  \ZZ_2 & \qquad \mbox{if } k = 2N(n-1) & \\
  & \qquad\mbox{or } k = 2N(n-1) -n+1 & \\
  & \qquad\mbox{or } k = 2N(n-1) -n & \\
  & \qquad\mbox{or } k = 2N(n-1) -2n+1 & & \mbox{for some }N\in\ZZ, \\
  0 & \qquad\mbox{else.}
  \end{aligned}
 \right.
\end{equation}

It can easily be checked that these degrees with $N\geq 0$ match those in \eqref{eq_string_top_Z2}, in accordance with \eqref{eq_SH_cotangent_bundle} and Claim~\ref{claim_h_inj}. Moreover, the generator $s = S_g(1) = [\mcN_1, \max]$ appears in the first line of \eqref{eq_SH_for_Sigma_222} with $N=1$.

Now, the main point in the comparison concerns the product structure.
Theorem~\ref{thm_summary_RFH} says that $\check{SH}(\Sigma)$ is a free module over $\ZZ_2[s,s^{-1}]$, with the module structure given by the pair-of-pants product. This matches with \eqref{eq_string_top_Z2}, where $s$ corresponds to $u^2$.

However, Theorem~\ref{thm_summary_RFH} does not see that $s$ has a square root.
Instead, we only see that $\check{SH}(\Sigma)$ is a four-dimensional free module over $\ZZ_2[s,s^{-1}]$, with the first four lines in \eqref{eq_SH_for_Sigma_222} each giving a generator.
This implies that as an algebra, $\check{SH}(\Sigma)$ can be generated by at most four elements, while \eqref{eq_SH_cotangent_bundle} and \eqref{eq_string_top_Z2} show that two generators suffice.

As an interesting side note, Theorem~\ref{thm_summary_RFH} in combination with \eqref{eq_string_top_Z2} and Lemma~\ref{lem_f_intertwines_product} reveals the full ring structure on $\check{SH}(\Sigma)$:

\begin{thm} \label{thm_full_ring_str}
 The ring structure of $\check{SH}(S^*S^n)$ for $n\geq 3$ is given by
 \begin{equation} \label{eq_RFH_ring_Sigma_222}
  \check{SH}(S^*S^n) = \ZZ_2[a,u,u^{-1}] / (a^2),
 \end{equation}
 where $|a|=-n$ and $|u|=n-1$.
\end{thm}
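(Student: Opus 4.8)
The plan is to bootstrap from the three facts already in hand: the computation $SH_*(D^*S^n;\ZZ_2)\cong\ZZ_2[a,u]/(a^2)$ of \eqref{eq_string_top_Z2} (whose pair-of-pants product is the Chas--Sullivan product, via \cite{Abb_Schwarz}), the fact that $f\colon SH(W)\to\check{SH}(\Sigma)$ is an \emph{injective} ring homomorphism (Lemma~\ref{lem_f_intertwines_product} and Claim~\ref{claim_h_inj}), and the additive description \eqref{eq_SH_for_Sigma_222} of $\check{SH}(\Sigma)$ as a graded $\ZZ_2$-vector space. All gradings below are product gradings, and $n\geq 3$ throughout, so that $\Sigma=S^*S^n$ is simply connected and the hypotheses of Theorem~\ref{thm_summary_RFH} hold (with $I(g)=n-1\neq 0$).

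First I would fix generators $a,u$ of $SH(W)\cong\ZZ_2[a,u]/(a^2)$ and, abusing notation, also write $a:=f(a)$, $u:=f(u)\in\check{SH}(\Sigma)$, so that $|a|=-n$, $|u|=n-1$; since $f$ is a ring homomorphism, $a^2=f(a^2)=0$, and since $f$ is injective, $a,u,au$ are all nonzero. Next I would locate the principal orbit $s=S_g(1)$ inside this picture. Evaluating \eqref{eq_Maslov_Brieskorn} at $a_0=\dots=a_n=2$ gives $L_P=2$ and $I(g)=2\bigl(\tfrac{n+1}{2}-1\bigr)=n-1$, hence $s$ has degree $2(n-1)=|u^2|$. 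By \eqref{eq_SH_for_Sigma_222} the space $\check{SH}_{2(n-1)}(\Sigma)$ is one-dimensional over $\ZZ_2$, and both $s$ (nonzero since $S_g$ is an isomorphism and $\check{SH}(\Sigma)\neq 0$, cf.\ Lemma~\ref{lem_S_g^k_lin_indep}) and $u^2=f(u^2)$ (nonzero by injectivity of $f$) are nonzero elements of it; therefore $s=u^2$. By Corollary~\ref{cor_S_g_is_mult}, $s$ is invertible in $\check{SH}(\Sigma)$, so $u$ is invertible with $u^{-1}=us^{-1}$. Together with $a^2=0$ this yields a well-defined homomorphism of graded $\ZZ_2$-algebras
\[
 \Phi\colon \ZZ_2[a,u,u^{-1}]/(a^2)\longrightarrow \check{SH}(S^*S^n),\qquad a\mapsto a,\ u\mapsto u .
\]

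It remains to show $\Phi$ is bijective. For surjectivity I would use Theorem~\ref{thm_summary_RFH}: $\check{SH}(\Sigma)$ is generated as a $\ZZ_2[s,s^{-1}]$-module by finitely many elements, and by the proof of the last theorem of Section~\ref{sec_back_to_usual_SH} these generators may be taken among linear combinations of the orbits of regions (IV)--(V) of Figure~\ref{fig:V_shaped_H}, hence inside $\im(f)$. Since $\im(f)\subseteq\im(\Phi)$ and $s^{\pm1}=u^{\pm2}=\Phi(u^{\pm2})$, the subring $\im(\Phi)$ contains a $\ZZ_2[s,s^{-1}]$-generating set of $\check{SH}(\Sigma)$, so $\Phi$ is onto. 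For injectivity I would compare Hilbert functions. In $\ZZ_2[a,u,u^{-1}]/(a^2)$ the monomials are $u^j$ (degree $j(n-1)$) and $au^j$ (degree $j(n-1)-n$) for $j\in\ZZ$; for $n\geq 3$ these degrees are pairwise distinct, so every graded piece is at most one-dimensional, and the degrees carrying a nonzero piece are exactly those $k$ with $k\equiv 0$ or $k\equiv -1\pmod{n-1}$. Rewriting the four families in \eqref{eq_SH_for_Sigma_222} as $2N(n-1)$, $(2N-1)(n-1)$, $2N(n-1)-n$, $(2N-1)(n-1)-n$ shows $\check{SH}(\Sigma)$ has exactly the same one-dimensional graded pieces. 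Thus $\Phi$ is a graded surjection between graded $\ZZ_2$-vector spaces with equal, everywhere-finite Hilbert functions, hence an isomorphism.

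The one genuinely non-formal point is the identification $s=u^2$ as \emph{elements} rather than merely matching degrees: it hinges on $\dim_{\ZZ_2}\check{SH}_{2(n-1)}(\Sigma)=1$ and on $u^2\neq 0$, the latter being precisely where the injectivity of $f$ from Claim~\ref{claim_h_inj} enters. Everything else is bookkeeping — matching $I(g)$ with $|u|$, matching the degree lists in \eqref{eq_SH_for_Sigma_222} and \eqref{eq_string_top_Z2}, and the elementary fact that a degreewise-finite graded surjection with equal Hilbert functions is an isomorphism. As a by-product, this argument exhibits the square root $u$ of $s$, a feature that Theorem~\ref{thm_summary_RFH} alone cannot detect.
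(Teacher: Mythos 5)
Your proof is correct, and it takes a genuinely different (and arguably more explicit) route than the paper.

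The paper's proof works pairing by pairing: for arbitrary $x,y,z\in\check{SH}(\Sigma)$ it picks $N\gg0$ so that $s^N\cdot x$, $s^N\cdot y$, and $S_g^{2N}(z)$ all lie in $\im(f)$, uses $\langle x\cdot y,z\rangle=\langle S_g^{2N}(x\cdot y),S_g^{2N}(z)\rangle=\langle(s^N\cdot x)\cdot(s^N\cdot y),S_g^{2N}(z)\rangle$, and then invokes the known product on $\im(f)\cong SH(W)$ from \eqref{eq_string_top_Z2}. That argument shows the structure constants agree without ever naming a ring map. You instead build the map $\Phi\colon\ZZ_2[a,u,u^{-1}]/(a^2)\to\check{SH}(\Sigma)$ directly, pin down $s=u^2$ by a one-dimensionality argument in degree $2(n-1)$ (for $n\geq 3$ the four degree families in \eqref{eq_SH_for_Sigma_222} are disjoint, so this is legitimate), get surjectivity from the $\ZZ_2[s,s^{-1}]$-module generation inside $\im(f)$ together with $s^{\pm1}=u^{\pm2}\in\im(\Phi)$, and get injectivity by matching the everywhere-$\leq 1$-dimensional Hilbert functions. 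Both arguments lean on the same three inputs (injectivity of $f$ from Claim~\ref{claim_h_inj}, the additive description \eqref{eq_SH_for_Sigma_222}, and Corollary~\ref{cor_S_g_is_mult}/Theorem~\ref{thm_summary_RFH}), so neither is substantially harder; what your version buys is the explicit identification $s=u^2$ — the ``square root of the principal orbit'' — which the paper's argument leaves implicit, and a cleaner justification of the additive bookkeeping via Hilbert functions. The paper's version has the minor advantage of not needing to check degree-coincidences (it never invokes one-dimensionality of any particular graded piece), but for this example that check is trivial.

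One small remark: in the surjectivity step it is worth stating, as the paper does, that the replacement $x\mapsto s^N\cdot x\in\im(f)$ requires working with $\check{SH}$ (finite sums) rather than $\widetilde{SH}$; you do cite Theorem~\ref{thm_summary_RFH} and the argument from Section~\ref{sec_back_to_usual_SH}, which covers it, but the point is delicate enough to flag explicitly.
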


\begin{proof}
 As a $\ZZ_2$-vector space, this follows from \eqref{eq_SH_for_Sigma_222}. So it remains to show that the product matches, i.e.\ that the expressions $\langle x\cdot y, z\rangle$ are what \eqref{eq_RFH_ring_Sigma_222} predicts.
 
 To see this, note that for any $x, y\in \check{SH}(\Sigma)$, we can find an $N\geq 0$ such that $s^N\cdot x, s^N\cdot y \in \im(f)$. (Here it is important that we use $\check{SH}$ and not $\widetilde{SH}$.)
 Now we can compute
 \[
  \langle x\cdot y, z\rangle = \langle S_g^{2N}(x\cdot y), S_g^{2N}(z)\rangle = \langle (s^N \cdot x)\cdot (s^N\cdot y), S_g^{2N}(z)\rangle ,
 \]
 and the right hand side only involves terms in $\im(f)$. For those, we already know from \eqref{eq_string_top_Z2} that the product structure is the one predicted by \eqref{eq_RFH_ring_Sigma_222}.
\end{proof}

Note that by \cite[Theorem~1.10]{Ciel_Fra_Oan}, there is an isomorphism
\begin{equation} \label{eq_iso_loop_cohom}
 \check{SH}_k(D^*M) \cong H^{-k+n+1}(LM) \qquad \text{for } k<n
\end{equation}
between $\check{SH}$ of the cotangent bundle and the cohomology of the free loop space of $M$ in sufficiently negative degrees (in the product grading). On this part, the pair-of-pants product is conjectured to be related to the Goresky--Hingston product on $H^*(LM, L_0 M)$ (the cohomology of the free loop space, relative to constant loops). Indeed, if we restrict the degrees further to the range where $H^*(LM, L_0 M) \cong H^*(LM)$ (i.e.\ $*>n+1$), these products might actually coincide. 

For spheres, the Goresky--Hingston product has been computed in \cite{Gor-Hing}. With $\ZZ_2$-coefficients and up to a grading shift, the result is
\begin{equation*}
 H^*(LS^n, L_0 S^n) \cong \Lambda(U) \otimes \ZZ_2[T]_{\geq 2},
\end{equation*}
where $\deg(T)=n-1$, $\deg(U)=1$ and $\ZZ_2[T]_{\geq 2}$ denotes the ideal in $\ZZ_2[T]$ generated by $T^2$. Thus, this example supports the conjecture that the product coincides with the pair-of-pants product on \eqref{eq_RFH_ring_Sigma_222}, with the identification $T\mapsto u^{-1}$ and $U\mapsto au$.

\subsubsection{$A_k$-surface singularities} \label{sec_A_k_surface}

Besides cotangent bundles, the only example of Brieskorn manifolds for which the product structure on symplectic homology has been computed are the $A_k$-surface singularities. They are by definition the fillings of the Brieskorn manifolds
\[
 \Sigma(k+1, 2, 2) \cong L(k+1,k)
\]
for $k>1$, which are contactomorphic to the lens spaces $L(k,k+1)$. The symplectic homology of their canonical filling, along with its ring structure has been computed in \cite{EL} (although it should be mentioned that their methods rely on theorems from \cite{BEE}, which are note yet proven in full rigor). The following theorem specializes the results of \cite[Theorem~40]{EL} to $\ZZ_2$-coefficients.

\begin{thm}[\cite{EL}] \label{thm_Etgü-Lekili}
 Denote by $W_k$ the canonical filling of $\Sigma(k+1,2,2)$. For $k$ even, its symplectic homology is given by
 \begin{equation} \label{eq_SH_Ak_sing_k_even}
  SH(W_k) = \ZZ_2[s_1,\ldots, s_k, t_1,t_0,t_{-2}] \Big/ (s_i s_j=0,\ s_i t_j=0,\ t_1^2=0,\ t_0^k=0),
 \end{equation}
 where the degrees are $|s_i|=-2$, $|t_1|=-1$, $|t_0|=0$ and $t_{-2}=2$. For $k$ odd,
 \begin{align}
  SH(W_k) = \ZZ_2[s_1,\ldots, s_k, t_1,t_0,u_{-1},t_{-2}] \Big/ & (s_i s_j=0,\ s_i t_1=0,\ s_i t_0=0,\ t_1^2=0,  \nonumber\\
  & s_iu_{-1}=t_1t_0^{k-1},\ s_it_{-2}=t_0^k,\ t_0u_{-1}=t_1t_{-2}, \label{eq_SH_Ak_sing_k_odd} \\
  & t_1u_{-1}=\alpha t_0^k,\ u_{-1}^2=\beta t_0^{n-1}), \nonumber
 \end{align}
 where the degrees are $|s_i|=-2$, $|t_1|=-1$, $|t_0|=0$, $u_{-1}=1$ and $t_{-2}=2$, and $\alpha=\beta=1$ if $4|(k+1)$, otherwise $\alpha=\beta=0$.
\end{thm}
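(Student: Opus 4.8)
The plan is to derive this directly from the computation of Etg\"u--Lekili \cite{EL}, whose input is the Legendrian surgery formula of \cite{BEE}, so that no new Floer-theoretic work is needed here. Concretely, \cite{EL} present $SH_*(W_k)$ as a graded algebra over $\ZZ$, built from the Chekanov--Eliashberg DG algebra of the Legendrian surgery diagram of the $A_k$ plumbing, and the first step is to apply the universal coefficient theorem to pass from their $\ZZ$-algebra to $SH_*(W_k;\ZZ_2)$. For the $\mathrm{Tor}$ term to vanish, so that the mod-$2$ algebra is exactly the reduction of the integral one as in \eqref{eq_SH_Ak_sing_k_even} and \eqref{eq_SH_Ak_sing_k_odd}, one checks from the explicit integral answer in \cite{EL} that $SH_*(W_k;\ZZ)$ is free abelian in each degree. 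This is plausible since the $A_k$ Milnor fibre is simply connected with torsion-free homology ($H_0=\ZZ$, $H_2=\ZZ^k$), so the negative part $SH^-$ is torsion-free, and one verifies that the positive part carries no $2$-torsion either.

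The second step is bookkeeping with gradings. The computation in \cite{EL} uses its own grading normalization, whereas here $\check{SH}$ and $SH$ are graded homologically with the pair-of-pants product of degree $n$; since $\dim\Sigma(k+1,2,2)=3$ we have $n=2$, and passing to the product grading $\mu_\product=\mu-2$ of Section~\ref{sec_Brieskorn} makes the product degree zero. I would then verify that the generators acquire the degrees listed, namely $|s_i|=-2$, $|t_1|=-1$, $|t_0|=0$, $|u_{-1}|=1$ and $|t_{-2}|=2$, by comparison with the Conley--Zehnder indices of the Reeb orbits of $\Sigma(k+1,2,2)$, equivalently with the Robbin--Salamon indices $\mu_\RS(\mcN_T)$ of the Morse--Bott families used elsewhere in this section. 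The periodicity coming from \eqref{eq_Maslov_Brieskorn} gives an extra consistency check, since $2I(g)$ for $\Sigma(k+1,2,2)$ should match the degree of the principal orbit $s$ visible in \eqref{eq_SH_Ak_sing_k_even} and \eqref{eq_SH_Ak_sing_k_odd}.

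The genuinely delicate point, and the one I expect to require the most care, is the dependence of the structure constants $\alpha,\beta$ on whether $4\mid(k+1)$ in the odd case: over $\ZZ$ these are governed in \cite{EL} by a sign coming from spin/orientation data of the Legendrian cocore spheres, and the claim is that their mod-$2$ reductions equal $1$ precisely when $4\mid(k+1)$ and $0$ otherwise. Since \cite{BEE} is not yet established in full rigour, I would quote \cite{EL} as the source of these constants rather than attempt to re-derive them, and merely record that their integral structure constants reduce mod $2$ to the stated values. Apart from this, the argument is a quotation of \cite{EL} together with the two routine reductions above; no result of the present paper enters, and this theorem is included only as the benchmark against which Theorem~\ref{thm_summary_RFH} is tested in Section~\ref{sec_comparison}.
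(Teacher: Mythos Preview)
The paper does not prove this theorem at all: it is stated with the attribution \cite{EL} in the theorem header and is followed only by a one-line remark that the grading here differs from that of \cite{EL} by a sign. There is no proof environment. Your proposal is therefore not so much a different proof as an attempt to supply what the paper deliberately omits, and your own final sentence essentially acknowledges this (``the argument is a quotation of \cite{EL} together with the two routine reductions above'').

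That said, one point in your sketch is shaky. Your explanation of the constants $\alpha,\beta$ as ``signs coming from spin/orientation data'' whose mod-$2$ reductions are $1$ exactly when $4\mid(k+1)$ cannot be right as stated: a sign $\pm 1$ always reduces to $1$ mod $2$, so this mechanism would never produce $\alpha=\beta=0$. In \cite{EL} the dichotomy arises from genuinely different integral structure constants (not merely signs) in the Hochschild cohomology computation, and the correct route over $\ZZ_2$ is simply to read off their mod-$2$ values from the tables there. Similarly, your appeal to the universal coefficient theorem presumes that $SH_*(W_k;\ZZ)$ is degreewise free; this is true, but it is something one extracts from the explicit answer in \cite{EL}, not from the soft argument about $SH^-$ you sketch (the positive part is where torsion could in principle appear). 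None of this matters for the paper, which is content to quote the result.
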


Here, the gradings are defined via filling disks in $W_k$, which is simply-connected. Note that, due to different conventions, our grading differs from \cite{EL} by a minus sign.

Unfortunately, $\Sigma(k+1,2,2)$ is not index-positive, because there are Reeb orbits with Conley--Zehnder index one (and which represent non-trivial classes in contact homology, so taking another contact form does not help). Thus, Theorem~\ref{thm_summary_RFH} cannot really be applied.
However, as far as one can infer from $SH(W_k)$, its conclusion still seems to holds.
For $k$ even, the grading shift is
\[
 \mu_P = 4(k+1) \left(\sum_j \frac{1}{a_j} - 1 \right) = 4,
\]
so it suffices to see that there is a generator of degree four whose products make $\check{SH}(\Sigma)$ periodic. Indeed, $f(t_{-2})^2$ has degree four. Moreover, it follows from \eqref{eq_h_factoring} and exactness of \eqref{eq_les_SH_RFH} that all $s_i$ get divided out by $f$. Hence, in $\im(f)\subset \check{SH}(\Sigma)$, there is no relation involving $f(t_{-2})$ (or its square), thus periodicity holds.

For $k$ odd, the grading shift is
\[
 \mu_P = 2(k+1) \left(\sum_j \frac{1}{a_j} - 1 \right) = 2,
\]
so the generator corresponding to the principle orbit could be $f(t_2)$ directly. The ring structure is more complicated in this case, but it still turns out that none of the relations in \eqref{eq_SH_Ak_sing_k_odd} destroys the periodicity coming from multiplication by $f(t_{-2})$.

In light of this result, it seems reasonable to conjecture that the conclusion of Theorem~\ref{thm_summary_RFH} holds for Brieskorn manifolds in general, even if they are not index-positive.

\end{document}